\renewcommand{\baselinestretch}{1.2} 
\numberwithin{equation}{section}
\def\@tocline#1#2#3#4#5#6#7{\relax
  \ifnum #1>\c@tocdepth 
  \else
    \par \addpenalty\@secpenalty\addvspace{#2}%
    \begingroup \hyphenpenalty\@M
    \@ifempty{#4}{%
      \@tempdima\csname r@tocindent\number#1\endcsname\relax
    }{%
      \@tempdima#4\relax
    }%
    \parindent\z@ \leftskip#3\relax \advance\leftskip\@tempdima\relax
    \rightskip\@pnumwidth plus4em \parfillskip-\@pnumwidth
    #5\leavevmode\hskip-\@tempdima
      \ifcase #1
       \or\or \hskip 1em \or \hskip 2em \else \hskip 3em \fi%
      #6\nobreak\relax
    \dotfill\hbox to\@pnumwidth{\@tocpagenum{#7}}\par
    \nobreak
    \endgroup
  \fi}
\theoremstyle{plain} \newtheorem{theorem}{Theorem}[section]
\theoremstyle{plain} \newtheorem{proposition}[theorem]{Proposition}
\theoremstyle{plain} \newtheorem{lemma}[theorem]{Lemma}
\theoremstyle{plain} \newtheorem{corollary}[theorem]{Corollary}
\theoremstyle{definition} \newtheorem{definition}[theorem]{Definition}
\theoremstyle{definition} 
\theoremstyle{remark} \newtheorem{remark}[theorem]{Remark}
\theoremstyle{remark} \newtheorem{example}[theorem]{Example}
\renewcommand{\P}{\mathbf P}
\newcommand{\Prob}[1]{\P\left\{#1\right\}}
\newcommand{\E}{\mathbf E}
\newcommand{\R}{\mathbb R}
\newcommand{\NN}{\mathbb N}
\newcommand{\Sphere}{\mathbb{S}^{d-1}}
\newcommand{\sA}{\mathcal{A}}
\newcommand{\sB}{\mathcal{B}}
\newcommand{\sC}{\mathcal{C}}
\newcommand{\sM}{\mathcal{M}}
\newcommand{\sL}{\mathcal L}
\newcommand{\sF}{\mathcal F}
\newcommand{\sK}{\mathcal K}
\newcommand{\fv}{\mathfrak{f}}
\newcommand{\Fv}{\boldsymbol{\mathfrak{f}}}
\renewcommand{\subset}{\subseteq}
\newcommand{\one}{\mathbbm{1}}
\newcommand{\dodn}{\overset{{\rm d}}\longrightarrow}
\DeclareMathOperator{\card}{card}
\DeclareMathOperator{\conv}{\mathrm{conv}\,}
\DeclareMathOperator{\cl}{\mathrm{cl}\,}
\newcommand{\bhoper}[2]{{\rm conv}_{#1}(#2)}
\DeclareMathOperator{\cn}{cn}
\DeclareMathOperator{\Int}{Int}
\DeclareMathOperator{\piv}{\mathfrak{p}}
\DeclareMathOperator{\exo}{exo}
\renewcommand{\epsilon}{\varepsilon}
\newcommand{\eps}{\epsilon}
\renewcommand{\emptyset}{\varnothing}
\newlength{\querylen}
\begin{document}
\title[Facial structure of strongly convex sets]{Facial structure of strongly convex sets generated by random samples}

\author{Alexander Marynych}
\address{Faculty of Computer Science and Cybernetics, Taras Shevchenko National University of Kyiv, Kyiv, Ukraine}
\email{marynych@unicyb.kiev.ua}

\author{Ilya Molchanov}
\address{Institute of Mathematical Statistics and Actuarial Science,
  University of Bern, Alpeneggstr. 22, 3012 Bern,  Switzerland}
\email{ilya.molchanov@stat.unibe.ch}

\begin{abstract}
  The $K$-hull of a compact set $A\subset\R^d$, where $K\subset \R^d$
  is a fixed compact convex body, is the intersection of all
  translates of $K$ that contain $A$.  A set is called $K$-strongly
  convex if it coincides with its $K$-hull.  We propose a general
  approach to the analysis of facial structure of $K$-strongly convex
  sets, similar to the well developed theory for polytopes, by
  introducing the notion of $k$-dimensional faces, for all
  $k=0,\dots,d-1$.  We then apply our theory in the case when
  $A=\Xi_n$ is a sample of $n$ points picked uniformly at random from
  $K$.  We show that in this case the set of $x\in\R^d$ such that
  $x+K$ contains the sample $\Xi_n$, upon multiplying by $n$,
  converges in distribution to the zero cell of a certain Poisson
  hyperplane tessellation.  From this results we deduce convergence in
  distribution of the corresponding $f$-vector of the $K$-hull of
  $\Xi_n$ to a certain limiting random vector, without any
  normalisation, and also the convergence of all moments of the
  $f$-vector.
\end{abstract}

\keywords{Ball convexity, random convex bodies, $f$-vector, strongly
 convex set, facial structure, $K$-strong convexity, zero cell of a
 Poisson tessellation}

\subjclass[2010]{Primary: 60D05; secondary: 52A22,  52B05}

\maketitle

\vspace{-1.5cm}

\renewcommand{\baselinestretch}{1.1}\normalsize
\tableofcontents
\renewcommand{\baselinestretch}{1.2}\normalsize

\begin{center}{\sc List of notation}\end{center}
Throughout this paper we use the following notation and notions:

\noindent
\begin{tabular}{cl}
  \multicolumn{2}{l}{Subsets of $\R^d$:}\\
  $B_{r}(x)$ & ~--~ the closed ball of radius $r>0$ centred at $x\in\R^d$,\\
  $\Sphere$ & ~--~ the centred unit sphere in $\R^d$.\\
  \multicolumn{2}{l}{Families of closed subsets of $\R^d$:}\\
  $\mathcal{C}^d$ & ~--~all compact sets in $\R^d$,  p.~\pageref{eq:cd_def},\\
  $\sK^d$ & ~--~all compact convex sets in $\R^d$,  p.~\pageref{eq:kd_def},\\
  $\sK^d_{0}$ & ~--~all compact convex sets in $\R^d$ containing the
                origin,  p.~\pageref{eq:kd_def_with_0},\\ 
  $\sK^d_{(0)}$ & ~--~all compact convex sets in $\R^d$ containing the
                  origin in the interior,
                  p.~\pageref{eq:kd_def_with_int_0}.\\ 
  \multicolumn{2}{l}{Operations on subsets of $\R^d$:}\\
  $K+L$ & ~--~the Minkowski sum of sets $K,L\in\mathcal{C}^d$,\\
  $K\ominus L$ & ~--~ the Minkowski difference of sets
                 $K,L\in\mathcal{C}^d$,  p.~\pageref{eq:mink_diff_def},\\
  $\conv(A)$ & ~--~the convex hull of $A\in\mathcal{C}^d$,\\
  $\bhoper{K}{A}$ & ~--~the $K$-hull of a given set $A\in\mathcal{C}^d$,
               p.~\pageref{eq:bh_def},\\ 
  $K^{o}$ & ~--~ the polar body to a given set $K\in\sK^d$,
            p.~\pageref{eq:polar_body_def}.\\ 
  \multicolumn{2}{l}{Topological operations in $\R^d$:}\\
  ${\rm cl}(A)$ & ~--~ the closure of $A\subset\R^d$ in the standard
                  topology on $\R^d$,\\ 
  $\Int (A)$ & ~--~ the set of all interior points of $A\subset\R^d$
               with respect to the standard topology on $\R^d$,\\ 
  $\partial A$ & ~--~ the boundary of $A\subset\R^d$ with respect to the
               standard topology on $\R^d$.\\  
  \multicolumn{2}{l}{Convex geometry:}\\
  $h(K,u)$ &~--~ the support function of $K\in\sK^d$ in direction
             $u\in\R^d\setminus\{0\}$,
             p.~\pageref{eq:supp_func_def},\\ 
  $H(K,u)$ &~--~ the supporting hyperplane of $K\in\sK^d$ in direction
             $u\in\R^d\setminus\{0\}$,
             p.~\pageref{eq:support_hyperplane_def},\\ 
  $F(K,u)$ &~--~ the support set of $K\in\sK^d$ in direction
             $u\in\R^d\setminus\{0\}$,  that is,  $F(K,u)=K\cap
             H(K,u)$,\\ 
  $N(K,x)$&~--~ the normal cone to $K\in\sK^d$ at $x\in K$,
            p.~\pageref{eq:normal_cone_def},\\ 
  $\tau(K,R)$ &~--~ the reverse spherical image of $R\subset\Sphere$
                for $K\in\sK^d$,  Eq.~\eqref{eq:tau_def},\\ 
  $\mathcal{F}(K)$ &~--~ the family of all faces of $K\in\sK^d$,  see
                   p.~\pageref{eq:face_def} for the definition of a
                   face,\\
  $S_{d-1}(K,\cdot)$ &~--~ the surface area measure of a convex body
                       $K\in\sK^d_0$.\\
                       
 \multicolumn{2}{l}{Probability and measures:}\\
  $\P$, $\E$ &~--~probability and expectation corresponding to the
               choice of a probability space $(\Omega,\mathfrak{F},\P)$,\\
  $V_d$ &~--~the Lebesgue measure on $\R^d$,\\
  $\mathcal{H}_{d-1}$ &~--~ the $(d-1)$-dimensional Hausdorff measure on $\R^d$,\\
  $\Xi_n$ &~--~random set $\{\xi_1,\dots,\xi_n\}$ of $n$ independent points
            uniformly distributed in $K\in\mathcal{K}^d$,\\
  $\mathcal{P}_K$ &~--~ the Poisson process on
                    $(0,\infty)\times\Sphere$ with intensity measure,
                    being the product of\\ 
             &\phantom{~--~} the constant $V_d(K)^{-1}$,  the Lebesgue
               measure $V_1$ on $(0,\,\infty)$ and $S_{d-1}(K,\cdot)$,
               $K\in\sK^d_{(0)}$, p.~\pageref{eq:ppp_def},\\ 
  $\Pi_K$  &~--~ the Poisson process on $\R^d\setminus\{0\}$ obtained
             as the image of $\mathcal{P}_K$ under the mapping\\ 
             &\phantom{~--~}  $(0,\,\infty)\times \Sphere \ni(t,u)\mapsto
               t^{-1}u\in\R^d\setminus\{0\}$.\\ 
\end{tabular}

\medskip
\noindent
A set $K\in\sK^d$ is called a convex body if
$\Int K\neq \varnothing$.  A convex body $K\in\sK^d$ is called
regular (or smooth) if the normal cone $N(K,x)$ is one-dimensional for
all $x\in\partial K$. A convex body $K\in\sK^d$ is called
strictly convex if $\partial K$ does not contain any proper segment.

\section{Introduction}
\label{sec:introduction}

Let $K$ be a convex body in $\R^d$, which contains the origin in its
interior. Consider a set $\Xi_n:=\{\xi_1,\dots,\xi_n\}$ composed of
$n$ independent copies of a random vector $\xi$ uniformly distributed
in $K$.  There is a substantial literature concerning probabilistic
properties of random polytopes obtained as convex hulls of $\Xi_n$,
see \cite[Chapter~8]{sch:weil08},
\cite{hug13,ReitznerCombinatorialStructure,reit10} and references
therein. As $n$ grows, the number of vertices of the polytope obtained
as the convex hull of $\Xi_n$ grows to infinity and one has to
properly normalise it in order to come up with a nontrivial limit. The
rate of growth heavily depends on smoothness properties of $K$. For
example, if $K$ has a sufficiently smooth boundary the average number
of vertices is of polynomial order ${\rm const}\cdot n^{(d-1)/(d+1)}$,
while if $K$ is itself a polytope, this quantity grows
logarithmically, as ${\rm const}\cdot(\log n)^{d-1}$, see
\cite{ReitznerCombinatorialStructure}.

A completely different behaviour of uniform samples on a half-sphere
was discovered in \cite{barany2017random}. Namely, it was shown that
the average numbers of vertices and facets of the spherical polytope,
obtained as the spherical convex hull of a uniform sample on the
half-sphere, converge to finite positive constants. This phenomenon
was explained in \cite{kab:mar:tem:19} by passing to stereographic
projections and establishing the convergence in distribution of the
properly scaled projected sample (regarded as a binomial point process
in the usual Euclidean space) to a certain Poisson point process,
whose conventional convex hull turned out to be a polytope with
probability one. This approach has clarified the aforementioned
convergence of averages, provided the identification of the limiting
constants and, moreover, has led to the proof of convergence in
distribution of the entire $f$-vector together with all power moments.
Further models exhibiting a very similar behaviour have been
considered in recent works
\cite{Baci+Bonnet+Thaele:2021,Kabluchko+Temesvari+Thaele:2020}.

A very similar phenomenon has been observed for ball hulls of random
samples. Recall that closed convex sets can be obtained as
intersections of closed half-spaces containing them. Replacing the
family of half-spaces with all translations of a ball yields the
definition of the ball hull of a set $A$ in Euclidean space as the
intersection of all balls of a fixed radius which contain
$A$. Accordingly, a set is called (strongly) ball convex if it
coincides with its ball hull, see \cite{bezdek13,bez:lan:nas:07} and
references therein.  It has been proved in \cite{fod:kev:vig14} that
the mean number of vertices and edges of the (unit) ball hull of a
uniform sample of points from the unit disk in $\R^2$ converge to the
constant $\pi^2/2$ as the size of the sample tends to
infinity. Remarkably, the latter constant coincides with the limiting
constant of the number of facets of the spherical polytope mentioned
in the previous paragraph.  This line of research was later on
augmented in \cite{fod:vig18} by showing converges of variances, still
in dimension two, and, later on, in \cite{fod19} extended to the
convergence of the mean value of the number of (appropriately defined)
facets in any dimension.  

Generalising the notion of strong ball convexity, it is possible to
replace a Euclidean ball with an arbitrary convex body $K$ and define
the $K$-hull of $A$ as the intersection of all translates of $K$ that
contain $A$. This concept (called the $K$-strong convexity) has been
intensively studied in \cite{bal:pol00,pol96} and accompanying
works. If $K$ is origin symmetric, it can be considered as the unit
ball in a Minkowski space and the $K$-hull of $A$ becomes the ball
hull of $A$ in a Minkowski space, see \cite{jah:mar:ric17} and
\cite{lan:nas:tal13}, the latter also includes the case of a not
necessarily origin symmetric $K$.
  
In this paper we study the $K$-hull, denoted by $Q_n$, of a random
sample $\Xi_n$ of $n$ independent and uniformly distributed points in
a convex body $K$. Then
$$
Q_n=\bigcap_{x\in\R^d:\,\Xi_n\subseteq x+K}(x+K)
$$
is the intersection of all translates of $K$ which contain $\Xi_n$. 

In dimension two and assuming that $K$ is sufficiently smooth, it is
straightforward to describe the facial structure of $Q_n$ in terms of
vertices and edges. In this case, the authors of \cite{fod:pap:vig20} show that the
expected number of vertices (equivalently, the expected number of
edges) of $Q_n$ converges to a finite value, however, the formula for
the constant is not correct.  In higher dimensions such a simple
decomposition of the boundary in terms of vertices and edges is no longer
available. In order to identify the facial structure of $K$-hulls, in
particular, of $Q_n$, in arbitrary dimension we develop a new concept
of the $f$-vector for a family of convex bodies, which boils down to
the usual $f$-vector of a polytope if these bodies are singletons.

The basic result establishes convergence in distribution of the
normalised Minkowski difference $X_n$ between $K$ and $Q_n$ to the zero
cell $Z$ in a hyperplane tessellation of $\R^d$ whose
directional intensity is determined by the surface area measure of
$K$. As a consequence, we prove the convergence in distribution of a
properly normalised intrinsic volumes of $X_n$, as $n\to\infty$,
together with all power moments. Furthermore, we show the convergence
in distribution (again, together with all power moments) of the vector
determining the facial structure of $Q_n$ to a random vector,
describing the facial structure of the zero cell $Z$.

In particular, it is shown that, if $K$ is strictly convex, regular,
origin symmetric, and also is a generating set (meaning that all
intersections of its translates are summands of $K$), then the
expected number of $(d-1)$-dimensional $K$-facets of $Q_n$ converges,
as $n\to\infty$, to $2^{-d}d!V_d(L)V_d(L^o)$, where $L$ is the
projection body of $K$ and $L^o$ is the polar body to $L$. This
is shown to be the special case of an analogous (but more involved)
formula proved for not necessarily origin symmetric $K$.

The paper is organised as follows. In
Section~\ref{sec:ball-convexity-with} we recall main concepts of the
$K$-strong convexity, set the notation and recall basic properties of
the $K$-hull operation. Section~\ref{sec:boundary-structure-k} extends
the concept of an $f$-vector to families of convex bodies. In
particular, we identify a general position for such families, which
extends the conventional general position concept for families of
singletons.  This general concept of $f$-vectors for families of
convex bodies is applied to $K$-hulls in
Section~\ref{sec:f-vector-ball}. The key idea is to identify the polar
set to the Minkowski difference between $K$ and a strongly convex set
$Q$ as the convex hull of the union of polars to translated copies of
$K$. We find conditions for this family to be in general position,
enabling us to identify their $f$-vectors.

Section~\ref{sec:k-convex-sets} deals with the setting of random
samples. The key results of the section, summarised in Theorem
\ref{thr:chull}, is a pair of dual limit relations for random convex
bodies related to $Q_n$. One is the already mentioned convergence in
distribution of the normalised Minkowski difference between $K$ and
$Q_n$, the $K$-hull of $\Xi_n$, to the zero cell in a certain
hyperplane tessellation of $\R^d$. The dual result provides
convergence in distribution of the corresponding polar bodies,
allowing us to deduce convergence in distribution of the $f$-vectors
in the subsequent section. Furthermore, we also obtain the convergence
in distribution of the intrinsic volumes and all their moments.

Finally, Section~\ref{sec:conv-texorpdfstr-vec} establishes the
convergence in distribution of the relevant $f$-vectors and also
convergence of all their power moments. The limit for the expected
number of facets has been explicitly calculated. If $K$ is origin
symmetric, this limit has a simple expression in trems of the volumes
of the projection body of $K$ and the polar projection body.

In the Appendix we prove three results that may be interesting for
their own sake. First, we show that a certain family of random convex
bodies pertained to the sample $\Xi_n$ is in general position with
probability one, akin to a similar (and easy) result for random
polytopes, saying that $d+1$ points sampled uniformly at random from
$K$ lie in a hyperplane with probability zero. Second, it is shown
that the convergence in distribution of convex hulls of unions of
binomial point processes on the family of convex bodies in $\R^d$
which contain the origin is equivalent to the convergence in
distribution of the whole processes in the vague topology.  Last but
not least we extend Schneider's result \cite{sch82}, concerning the
expected number of vertices of a zero cell $Z$, to not necessarily
even directional intensity measures.

\section{Ball convexity with respect to a convex body}
\label{sec:ball-convexity-with}

Denote by $\sC^d\label{eq:cd_def}$ the family of compact sets in
$\R^d$ equipped with the Hausdorff metric, and by
$\sK^d\label{eq:kd_def}$ the family of all compact convex sets in
$\R^d$.  Let $\sK_0^d\label{eq:kd_def_with_0}$ be the family of
compact convex sets which contain the origin and let
$\sK^d_{(0)}\label{eq:kd_def_with_int_0}$ be the family of convex
bodies $K$ (that is, compact convex sets with non-empty interior)
whose interior $\Int K$ contains the origin. Thus,
$$
\sC^d \supseteq \sK^d \supseteq \sK^d_0 \supseteq \sK^d_{(0)},
$$
$\sK^d$ is a closed subset of $\sC^d$, $\sK^d_0$ is a closed subset of
$\sK^d$, but $\sK^d_{(0)}$ is not closed in the Hausdorff metric.

For a set $L$ in $\R^d$ denote by $L+x$ its translation by $x\in\R^d$,
and by
\begin{displaymath}
  -L :=\{-x\in\R^d: x\in L\}
\end{displaymath}
its reflection with respect to the origin. Further, $\partial L$ is
the topological boundary of $L$.

For $K,L\in\sC^d$, their \emph{Minkowski sum} is 
\begin{displaymath}
  K+L:=\{x+y: x\in K,\;y\in L\},
\end{displaymath}
and the set
\begin{displaymath}
  \label{eq:mink_diff_def}
  K\ominus L:=\{x\in\R^d: L+x\subset K\}
\end{displaymath}
is called the \emph{Minkowski difference}, see, e.g.,
\cite[p.~146]{schn2}. The Minkowski difference is empty if $K$ does
not contain a translate of $L$. Note the following easy result.

\begin{lemma}
  \label{lemma:minus}
  For each $K,L\in\sC^d$,
  \begin{equation}
    \label{eq:1}
    K\ominus L=\{x: L\subset K-x\}=\bigcap_{y\in L}(K-y)=\bigcap_{y\in -L}(K+y). 
  \end{equation}
\end{lemma}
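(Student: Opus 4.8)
The plan is to prove the chain of equalities in \eqref{eq:1} by unwinding the definitions. The first equality is purely a matter of rewriting the defining condition: by definition $K\ominus L=\{x:L+x\subset K\}$, and the inclusion $L+x\subset K$ holds if and only if $y+x\in K$ for every $y\in L$, which is the same as $y\in K-x$ for every $y\in L$, i.e.\ $L\subset K-x$. So the first equality $K\ominus L=\{x:L\subset K-x\}$ is immediate.

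For the second equality, I would observe that the condition $L\subset K-x$ means that every point $y\in L$ lies in $K-x$, equivalently $x\in K-y$ for that fixed $y$. Requiring this simultaneously for all $y\in L$ is exactly the statement that $x$ lies in the intersection $\bigcap_{y\in L}(K-y)$. Thus $\{x:L\subset K-x\}=\bigcap_{y\in L}(K-y)$, and each step is reversible, so the set equality holds.

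The final equality $\bigcap_{y\in L}(K-y)=\bigcap_{y\in -L}(K+y)$ is just a reindexing: the map $y\mapsto -y$ is a bijection between $L$ and $-L$, and $K-y=K+(-y)$, so replacing the index $y$ running over $L$ by $-y$ running over $-L$ leaves the intersection unchanged. I would spell out that as $y$ ranges over $L$, the element $-y$ ranges over $-L$, and $K-y=K+(-y)$, whence the two intersections coincide term by term.

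There is essentially no obstacle here; the only point requiring a word of care is the bookkeeping of the quantifier over $y$, namely that ``$L+x\subset K$'' is a statement quantified universally over points of $L$, and translating that universal quantifier into an intersection is what makes the middle equality work. One should note in passing that if $K$ contains no translate of $L$, then both sides are empty, consistent with the convention recalled before the lemma, so the identity remains valid in that degenerate case as well.
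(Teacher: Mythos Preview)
Your proof is correct and is exactly the straightforward unwinding of definitions one would expect; the paper itself does not give a proof, merely introducing the lemma with ``Note the following easy result.'' There is nothing to compare.
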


Fix a convex compact set $K\in\sK^d$.  For a compact set $A$ in $\R^d$,
define its \emph{$K$-hull} as
\begin{displaymath}
  \label{eq:bh_def}
  \bhoper{K}{A}:=\bigcap_{x\in\R^d:\,A\subset K+x} (K+x),
\end{displaymath}
so that $\bhoper{K}{A}$ is equal to the intersection of all translates of
$K$ which contain $A$.  If $A$ is not contained in any translate of
$K$, then its $K$-hull is set to be $\R^d$.  A set is said to be
\emph{$K$-strongly convex} if it coincides with its $K$-hull, see
\cite{bal:pol00}.  If $K$ is the Euclidean ball, then $\bhoper{K}{A}$ is
called the \emph{ball hull} of $A$ and a $K$-strongly convex set is
called \emph{ball convex}, see \cite{bezdek13,bez:lan:nas:07}.  If
$K\in\sK^d_{(0)}$ is (origin) symmetric, that is, $K=-K$, the $K$-hull
can be viewed as the ball hull in the Minkowski space with $K$ being
its unit ball, see \cite{jah:mar:ric17}.

Let $\cn_K(A)$ be the set of all $x$ such that $A\subset K+x$. By
Lemma~\ref{lemma:minus},
\begin{displaymath}
  \cn_K(A):=\{x\in\R^d: A\subset K+x\}=-(K\ominus A),
\end{displaymath}
and further
\begin{equation}
  \label{eq:7}
  \bhoper{K}{A}=\bigcap_{x\in\cn_K(A)} (K+x)=K\ominus
  (-\cn_K(A))=K\ominus(K\ominus A). 
\end{equation}
The following result shows that the mapping
$A\mapsto (K\ominus A)=-\cn_K(A)$ can be considered a dual to the
operation of taking $K$-hull of $A$. While the second statement is
known, see \cite[Lemma~3.1.10]{schn2}, we provide its short proof for
completeness.

\begin{proposition}
  \label{prop:minus}
  For all $A\in\sC^d$, we have 
  \begin{equation}
    \label{eq:2}
    K\ominus A=K\ominus \bhoper{K}{A},
  \end{equation}
  and $K\ominus A$ is $K$-strongly convex. Moreover, $Q\in\sK^d$ is
  $K$-strongly convex if and only if
  \begin{equation}
    \label{eq:8}
    Q=K\ominus(K\ominus Q). 
  \end{equation}
\end{proposition}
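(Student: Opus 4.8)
The plan is to prove the three assertions in order, extracting everything from equation~\eqref{eq:7}, namely the identity $\bhoper{K}{A}=K\ominus(K\ominus A)$, together with the elementary monotonicity and idempotency properties of the Minkowski difference recorded in Lemma~\ref{lemma:minus}.

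\medskip\noindent\emph{Step 1: the identity \eqref{eq:2}.} First I would establish the key auxiliary fact that $\bhoper{K}{A}$ is the smallest $K$-strongly convex set containing $A$, and in particular that $A\subset\bhoper{K}{A}$. Since the Minkowski difference is inclusion-reversing in its second argument (immediate from the representation $K\ominus L=\bigcap_{y\in L}(K-y)$ in \eqref{eq:1}), the inclusion $A\subset\bhoper{K}{A}$ yields $K\ominus\bhoper{K}{A}\subset K\ominus A$. For the reverse inclusion I would argue that every translate $K+x$ containing $A$ also contains $\bhoper{K}{A}$, simply because $\bhoper{K}{A}$ is \emph{defined} as the intersection of all such translates; hence $\cn_K(A)=\cn_K(\bhoper{K}{A})$, and passing through $K\ominus A=-\cn_K(A)$ gives $K\ominus A\subset K\ominus\bhoper{K}{A}$. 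Combining the two inclusions proves \eqref{eq:2}.

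\medskip\noindent\emph{Step 2: $K\ominus A$ is $K$-strongly convex.} By the defining characterisation \eqref{eq:7}, a set $Q$ is $K$-strongly convex precisely when $Q=\bhoper{K}{Q}=K\ominus(K\ominus Q)$. Applying the operation $K\ominus(\,\cdot\,)$ to both sides of \eqref{eq:2} gives $K\ominus(K\ominus A)=K\ominus(K\ominus\bhoper{K}{A})$; but the right-hand side equals $\bhoper{K}{\bhoper{K}{A}}=\bhoper{K}{A}$ by the idempotency of the hull (applying \eqref{eq:2} again, now with $A$ replaced by $\bhoper{K}{A}$, or simply noting that $\bhoper{K}{A}$ is already strongly convex). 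Writing $Q:=K\ominus A$, this reads $K\ominus(K\ominus Q)=\bhoper{K}{A}=K\ominus(K\ominus A)=Q$, which is exactly the strong convexity of $Q=K\ominus A$.

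\medskip\noindent\emph{Step 3: the equivalence \eqref{eq:8}.} If $Q$ is $K$-strongly convex then $Q=\bhoper{K}{Q}=K\ominus(K\ominus Q)$ by \eqref{eq:7}, giving the forward direction. Conversely, suppose $Q=K\ominus(K\ominus Q)$; then $Q$ is expressed as a Minkowski difference of the form $K\ominus L$ with $L=K\ominus Q$, and by Step~2 every such set is $K$-strongly convex. This closes the equivalence. The only point requiring genuine care is Step~1, since the two inclusions rely on opposite monotonicity directions and one must be attentive that the set $\cn_K(A)$ of admissible translation vectors is genuinely unchanged when $A$ is replaced by its hull; the rest is a formal manipulation of the three operations $\bhoper{K}{\cdot}$, $K\ominus(\,\cdot\,)$, and $\cn_K(\cdot)$ via \eqref{eq:7} and the inclusion-reversing property of $\ominus$.
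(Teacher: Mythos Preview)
Your Step~1 and Step~3 are correct and essentially identical to the paper's argument. Step~2, however, contains a substitution error that makes the displayed chain false as written. After applying $K\ominus(\cdot)$ to both sides of \eqref{eq:2} you correctly arrive at $K\ominus(K\ominus A)=\bhoper{K}{A}$; but setting $Q:=K\ominus A$ turns this into $K\ominus Q=\bhoper{K}{A}$, \emph{not} $K\ominus(K\ominus Q)=\bhoper{K}{A}$. Likewise the final equality $K\ominus(K\ominus A)=Q$ in your chain asserts $\bhoper{K}{A}=K\ominus A$, which fails for generic $A$. The fix is to apply $K\ominus(\cdot)$ once more: from $K\ominus(K\ominus A)=\bhoper{K}{A}$ one gets
\[
K\ominus(K\ominus Q)=K\ominus\bigl(K\ominus(K\ominus A)\bigr)=K\ominus\bhoper{K}{A}\overset{\eqref{eq:2}}{=}K\ominus A=Q,
\]
which is the desired strong convexity of $Q=K\ominus A$.

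By contrast, the paper dispatches Step~2 in one line directly from \eqref{eq:1}: since $K\ominus A=\bigcap_{y\in A}(K-y)$ is already an intersection of translates of $K$, its $K$-hull (the intersection of \emph{all} translates of $K$ containing it) can only be smaller, hence equal. Your route through \eqref{eq:2} and \eqref{eq:7} is valid once corrected, but it is more circuitous and relies on idempotency of the hull, which itself is most cleanly seen from the same observation the paper uses.
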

\begin{proof}
  Since $A\subset \bhoper{K}{A}$, we have 
  \begin{displaymath}
    K\ominus \bhoper{K}{A}\subset K\ominus A.
  \end{displaymath}
  Let $x\in K\ominus A$. Then $A\subset K-x$, so that
  $\bhoper{K}{A}\subset K-x$. Hence, $\bhoper{K}{A}+x\subset K$, meaning that
  $x\in K\ominus \bhoper{K}{A}$.
  By \eqref{eq:1}, the set $K\ominus A$ is $K$-strongly convex for all
  $A$.  The characterisation of $K$-strongly convex sets by
  \eqref{eq:8} follows from \eqref{eq:7}.
\end{proof}

A set $Q$ is called a \emph{summand} of $K$ if $K=Q+L$ for some set
$L$ in $\R^d$.  In this case, $K\ominus Q=L$ and $K\ominus L=Q$, hence
\eqref{eq:8} holds. Thus, each summand of $K$ is $K$-strongly
convex. The opposite implication holds for $K$, being a
generating set. Following \cite{bal:pol00} and \cite{pol96}, a
convex set $K\in\sK^d$ is called a \emph{generating set} if each
intersection of its translates is a summand of $K$. In this case, the
family of $K$-strongly convex sets coincides with the family of
summands of $K$. It is known that the Euclidean ball is a generating
set, and all convex bodies in dimension $d=2$ are generating sets, see
\cite[Theorem~2]{pol96} and \cite[Section~3.2]{schn2}.

A set $A$ is called $K$-spindle convex if $A$ contains
$\bhoper{K}{\{x,y\}}$ for all $x,y\in A$. In general, the $K$-strong
convexity implies the $K$-spindle convexity, and the inverse
implication holds if $K$ is a generating set, see \cite[Theorem~2]{kar04}.

Recall that $K\in\sK^d$ is called \emph{strictly convex}, if the
boundary $\partial K$ of  $K$ does not contain any proper segment.

\begin{lemma}
  \label{lemma:str-convex}
  If $K\in\sK^d$ is strictly convex, then all $K$-strongly convex sets
  are also strictly convex. In particular, for all $A\in\sC^d$, the set
  $K\ominus A$ is strictly convex or empty.
\end{lemma}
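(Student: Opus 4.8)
The plan is to reduce the statement to a single geometric fact about intersections of translates of $K$ and then to prove that fact by a compactness argument. First I would record that a nonempty $K$-strongly convex set $Q\in\sK^d$ coincides with its $K$-hull, so that by \eqref{eq:7} one has $Q=\bigcap_{x\in\cn_K(Q)}(K+x)$, where the index set $\cn_K(Q)=-(K\ominus Q)$ is compact (a Minkowski difference of compact sets) and nonempty precisely because $Q$ is contained in some translate of $K$. Since $K$ is strictly convex, every translate $K+x$ is strictly convex as well. Thus it suffices to prove: an intersection $Q=\bigcap_{x\in T}(K+x)$ of strictly convex translates of $K$ over a \emph{compact} index set $T\subset\R^d$ is strictly convex. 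The \emph{in particular} assertion then follows at once: by Proposition~\ref{prop:minus} the set $K\ominus A$ is $K$-strongly convex, hence strictly convex whenever it is nonempty.

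To establish the reduced claim I would argue by contradiction. Suppose $\partial Q$ contains a proper segment $[a,b]$ with $a\neq b$, and put $p:=(a+b)/2\in\partial Q$. Choosing a supporting hyperplane $H$ of the convex set $Q$ at $p$, with outer normal $u$, gives $Q\subset H^{-}$ and $\langle a,u\rangle,\langle b,u\rangle\le\langle p,u\rangle$; since $p$ is the midpoint of $a$ and $b$, both inequalities must be equalities, so $a,b\in H$ and $[a,b]\subset H\cap\partial Q$. This elementary \emph{midpoint lemma} will be reused below: whenever a convex body $C$ contains $a$ and $b$ and has $p=(a+b)/2\in\partial C$, the entire segment $[a,b]$ lies in $\partial C$.

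The crux is to produce a \emph{single} translate whose boundary already passes through $p$, and here compactness of $T$ is essential: for an infinite intersection a boundary point of $Q$ may well lie in the interior of every individual translate. I would consider the map $x\mapsto\operatorname{dist}\bigl(p,\R^d\setminus(K+x)\bigr)$, which is well defined and continuous on $T$ because $p\in Q\subset K+x$ for all $x\in T$ and $K+x$ varies continuously in the Hausdorff metric. Were this map strictly positive on $T$, its minimum $\delta>0$ over the compact set $T$ would give $B_{\delta}(p)\subset K+x$ for every $x\in T$, hence $B_{\delta}(p)\subset Q$, contradicting $p\in\partial Q$. Therefore there exists $x^{*}\in T$ with $p\in\partial(K+x^{*})$. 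Since $a,b\in Q\subset K+x^{*}$ and $p=(a+b)/2$, the midpoint lemma yields $[a,b]\subset\partial(K+x^{*})$, a proper segment in the boundary of the strictly convex body $K+x^{*}$ — a contradiction. I expect this continuity-and-compactness step (producing $x^{*}$) to be the main obstacle, the remainder being routine convexity bookkeeping.
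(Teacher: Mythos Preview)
Your proof is correct and follows essentially the same approach as the paper's. Both arguments locate a single translate $K+x^{*}$ whose boundary passes through the midpoint $p$ by exploiting compactness of the index set $\cn_K(Q)$: you phrase this via the extreme value theorem applied to the continuous map $x\mapsto\operatorname{dist}(p,\R^d\setminus(K+x))$, while the paper uses the equivalent sequential formulation (extract a convergent subsequence of translates whose boundaries approach $p$), and the concluding midpoint-on-the-boundary contradiction is identical.
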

\begin{proof}
  Let $Q$ be $K$-strongly convex. The proof is particularly simple if
  $K$ is a generating set, so that $Q$ is a summand of $K$. Hence, if
  $Q$ has a proper segment on its boundary, then $K$ is no longer
  strictly convex, which is a contradiction.

  If $K$ is not necessarily a generating set, the proof follows the
  scheme of the proof of this fact for origin symmetric $K$ in
  \cite{jah:mar:ric17}. Assume that the segment $\conv\{x_1,x_2\}$ is
  a subset of $\partial Q$ for $x_1\neq x_2$. Then
  \begin{displaymath}
    x:=(x_1+x_2)/2\in\partial Q
    =\partial \bigg(\bigcap_{y\in\R^d:Q\subset K+y} (K+y)\bigg). 
  \end{displaymath}
  Therefore, there exists a sequence $(y_i)_{i\in\NN}$ such that
  $Q\subset K+y_i$ for all $i$ and the distance from $x$ to
  $\R^d\setminus (K+y_i)$ converges to zero as $i\to\infty$. Since the
  sequence $(y_i)_{i\in\NN}$ is necessarily bounded, assume without
  loss of generality that $y_i\to y_0$ as $i\to\infty$. Then
  $x\in(\partial K+y_0)$, because $x\in K+y_0$ and the distance from
  $x$ to $\R^d\setminus (K+y_0)$ is equal to zero.  Since
  $x_1,x_2\in K+y_0$, we necessarily have $x_1,x_2\in\partial K+y_0$,
  so that $\partial K$ contains a nontrivial segment, which is a
  contradiction.
\end{proof}

For a set $L$ in $\R^d$, its \emph{polar set} is defined by
\begin{displaymath}
  \label{eq:polar_body_def}
  L^o:=\{u\in\R^d: h(L,u)\leq 1\},
\end{displaymath}
where 
\begin{displaymath}
  \label{eq:supp_func_def}
  h(L,u):=\sup\{\langle u,x\rangle: x\in L\}
\end{displaymath}
is the \emph{support function} of $L$ and $\langle \cdot,\cdot\rangle$
is the inner product in $\R^d$.  If $L$ is convex, closed and contains the
origin in its interior, then $L^o$ is a convex body, and $(L^o)^o=L$,
see \cite[Theorem~1.6.1]{schn2}.

It is well known that the polar set to the intersection of convex
compact sets containing the origin is equal to the closed convex hull of
the union of their polar sets, see \cite[Theorem~1.6.3]{schn2}.  Thus,
\eqref{eq:1} yields
\begin{equation}
  \label{eq:K-A}
  (K\ominus A)^{o}=\cl \conv \left(\bigcup_{y\in A}(K-y)^{o}\right),
\end{equation}
see \cite[Theorem~1.6.3]{schn2} for finite $A$ with the general case
derived by similar arguments.  This representation will be of major
importance for us, since it leads to a description of the facial
structure of $K\ominus A=K\ominus \bhoper{K}{A}$ and, {\it mutatis
  mutandis}, of $\bhoper{K}{A}$ in Sections~\ref{sec:boundary-structure-k}
and \ref{sec:f-vector-ball} below.

\section{Facial structure of convex hulls of  collections of convex sets}
\label{sec:boundary-structure-k}

\subsection{General position concept}
\label{sec:gener-posit-conc}

A \emph{face} of a convex compact set $L\in\sK^d\label{eq:face_def}$
is a convex subset $F$ of $L$ such that $x,y\in L$ and $(x+y)/2\in F$
imply that $x,y\in F$. The family of all faces of $L$ is denoted by
$\sF(L)$.  Note that $L$ and $\varnothing$ are also faces.  All other
faces are called \emph{proper}, and the family of proper faces is
denoted by
\begin{displaymath}
  \sF'(L):=\sF(L)\setminus\{L,\varnothing\}.
\end{displaymath}
A dimension of a face $F\in\sF(L)\setminus\{\varnothing\}$ is the
dimension of the smallest affine subspace containing $F$.  Denote by
$\sF_k(L)$ the family of $k$-dimensional faces of $L$.  The relative
interiors of $F\in\sF(L)$ provide a disjoint decomposition of $L$, see
\cite[Theorem~2.1.2]{schn2}.  The topological boundary $\partial L$ is
the disjoint union of relative interiors of proper faces.

A $(d-1)$-dimensional affine subspace $H$ is said to be a
\emph{supporting hyperplane} of nonempty $L\in\sK^d$ if $H$ intersects
$L$ and $L$ is a subset of one of the two half-spaces bounded by $H$.
A set $E\subset L$ is called an \emph{exposed face} if there exists a
supporting hyperplane $H$ of $L$ such that $E=L\cap H$. Each exposed
face of $L$ is a face of $L$, and each proper face of $L$ is contained
in an exposed face of $L$, see \cite[p.~75]{schn2}.

Let $\sL:=\{L_i, i\in I\}\subset\sK^d$ be a collection of convex
compact sets, such that their convex hull
\begin{displaymath}
  \conv(\sL):=\conv \Big(\bigcup_{i\in I}\;L_i\Big)
\end{displaymath}
is a compact set. Recall that $\conv(\sL)$ is the set of all (finite)
\emph{convex combinations} $\sum_{j=1}^m \lambda_j x_j$ for $m\in\NN$,
$\lambda_1,\dots,\lambda_m\geq 0$, $\lambda_1+\cdots+\lambda_m=1$ and
$x_j\in L_{i_j}$, $i_j\in I$, $j=1,\dots,m$. A convex combination is
said to be \emph{positive} if all coefficients $\lambda_j$ are
strictly positive. By Carath\'{e}odory's theorem, see
\cite[Theorem~1.1.4]{schn2}, it suffices to let $m\leq d+1$.

Let $A$ be an arbitrary closed convex subset of some exposed face $F$
of $\conv(\sL)$.  Put
\begin{displaymath}
  \sM(\sL,A):=\{L\in\sL: L\cap A\neq\varnothing\}.
\end{displaymath}
Recalling that each proper face is a closed convex subset of some
exposed face, we see that $\sM(\sL,F)$ is well defined.
Furthermore, in this case we have
\begin{equation}
  \label{eq:F_rep}	
  F=\conv \bigg(\bigcup_{L\in \sM(\sL,F)}(F\cap L)\bigg),
  \quad F\in\sF\big(\conv(\sL)\big).
\end{equation}
Indeed, by Carath\'{e}odory's theorem
for every $x\in F$, there exist $m\leq d+1$ and
$\{L_1,\dots,L_m\}\subset\sL$, such that
$x$ is a positive convex combination of $x_i\in L_i$, $i=1,\dots,m$.
By definition of a face, this implies $x_i\in F$, and therefore
$x_i\in L_i\cap F$, for all $i=1,\dots,m$. Thus, every $x\in F$ can be
written as a convex combination of points from $F\cap L$ for
$L\in \sM(\sL,F)$, yielding
\begin{displaymath}
  F\subset \conv \bigg(\bigcup_{L\in \sM(\sL,F)}(F\cap L)\bigg).
\end{displaymath}
The converse inclusion is obvious, hence, \eqref{eq:F_rep} holds.

\begin{definition}
  \label{def:gp}
  The sets from $\sL$ are said to be in \emph{general position} if, for
  each closed convex subset $A$ of each exposed face of $\conv(\sL)$,
  the family $\sM(\sL,A)$ is finite, and
  \begin{equation}
    \label{eq:def_gp_formula}
    \sum_{L\in \sM(\sL,A)}\big(1+\dim(A\cap L)\big)\leq \dim(A)+1,
  \end{equation}  
  where $\dim$ denotes the affine dimension.
\end{definition}

Some examples of families $\sL$, which are in general position, and
which are not, are given on Figure~\ref{figure_gen_pos}.

\begin{figure}
\includegraphics[scale=0.35]{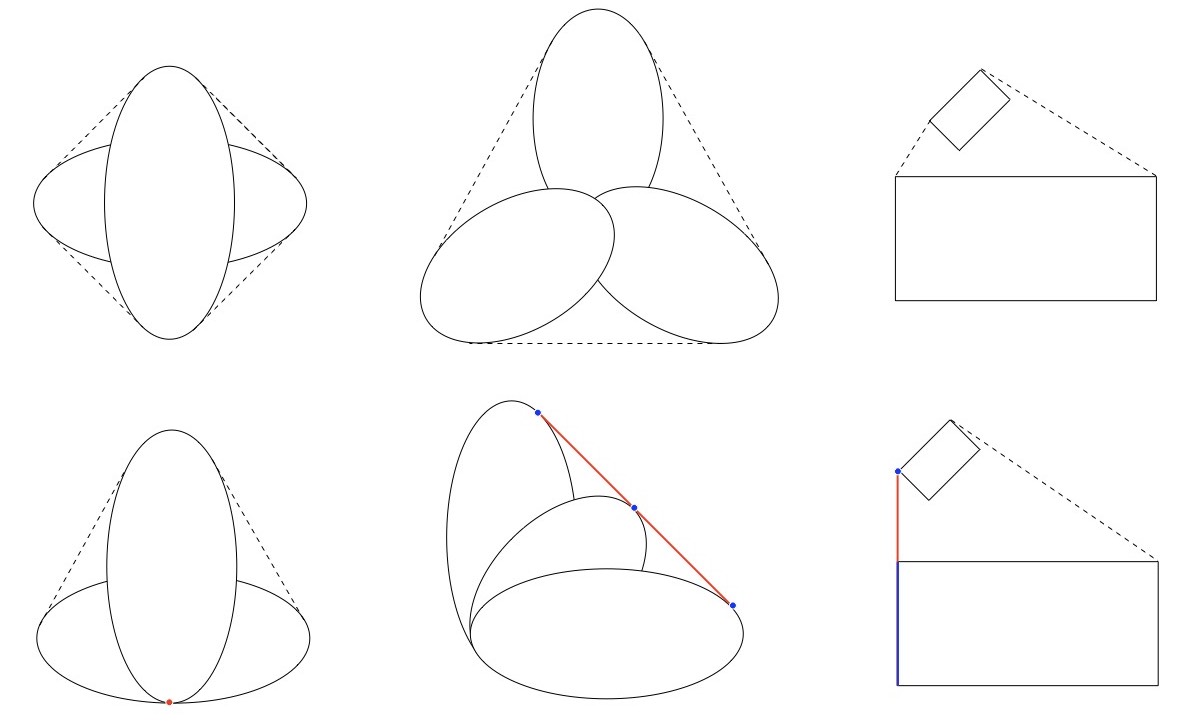}
\caption{First row: Three families of convex sets (two ellipses, three
  ellipses and two rectangles) in $\R^2$ in general position.  Second
  row: Three families of convex sets (two ellipses, three ellipses and
  two rectangles) which are not in general position.  In red: the sets
  $A$ for which \eqref{eq:def_gp_formula} is violated.  In blue: the
  sets $L\cap A$. The $\Fv$-vectors of the families depicted in the
  first row are $\Fv(\sL)=(2,1)$,
  $\Fv(\sL)=(3,3)$ and $\Fv(\sL)=(2,1)$, respectively.}
\label{figure_gen_pos}
\end{figure}

\begin{remark}
  \label{rem:equality}
  If $A=F$ is a face of $\conv(\sL)$, then the inequality in
  \eqref{eq:def_gp_formula} turns into the equality. This follows from
  \eqref{eq:F_rep} in view of the inequality
  \begin{displaymath}
    1+\dim(F)=1+\dim\bigg(\conv\Big(\bigcup_{L\in \sM(\sL,F)}(F\cap L)\Big)\bigg)
    \leq \sum_{L\in \sM(\sL,F)}\big(1+\dim(F\cap L)\big),
  \end{displaymath}
  where we have used the fact that the dimension of the convex hull of
  two sets of dimensions $m_1$ and $m_2$ is at most $m_1+m_2+1$.
\end{remark}

Assume that the sets in $\sL$ are in general position and
$F_m\in\sF_m(\conv(\sL))$ is an $m$-dimensional face of $\conv(\sL)$
for some $m=0,\dots,d-1$. By \eqref{eq:def_gp_formula} applied with
$A=F_m$, the cardinality of $\sM(\sL,F_m)$ is at most $m+1$, and
therefore every set in
\begin{displaymath}
  \sM(\sL):=\big\{\sM(\sL,F):\; F\in\sF'\big(\conv(\sL)\big)\big\}
\end{displaymath}
contains at most $d+1$ sets from $\sL$. For $k=0,\dots,d-1$, denote by
$M_k(\sL)$ the subset of $\sM(\sL)$ which consists of (unordered)
$(k+1)$-tuples.

\begin{definition}
  \label{def:facial_structure}
  Let $\sL$ be a family of convex bodies in general position. The
  elements of $M_k(\sL)$ are called \emph{$k$-dimensional faces} of the
  family $\sL$. The \emph{$\Fv$-vector} of the family $\sL$ is defined
  by the equality $\Fv(\sL):=\big(\fv_0(\sL),\dots,\fv_{d-1}(\sL)\big)$, where
  $\fv_k(\sL)$ is the cardinality of $M_k(\sL)$ counted without
  multiplicities, $k=0,\dots,d-1$.  As usual, vertices are
  $0$-dimensional faces of the family $\sL$, edges are $1$-dimensional
  faces of $\sL$, etc.
\end{definition}

It is important to stress that, in general, a face of the family $\sL$
is not a face of the convex compact set $\conv(\sL)$. For example, if
$\sL=\{L\}$ consists of a single convex body $L$, then $L$ is the
unique $0$-dimensional face of the family $\sL$.

The next lemma shows essentially that every face of a family $\sL$,
which is in general position, contains a vertex, that is, a
$0$-dimensional face of the family $\sL$.

\begin{lemma}
  \label{lemma:vertices}
  Let $\sL$ be a family of convex bodies in general position.  Then,
  for each $F\in\sF'(\conv(\sL))$ and $L\in \sM(\sL,F)$, there
  exists a $G\in\sF'(\conv(\sL))$ such that $L\cap G=G$.
\end{lemma}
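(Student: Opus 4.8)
The plan is to realise the desired face as a face of $\conv(\sL)$ of \emph{minimal dimension} among those that meet $L$. Write $P:=\conv(\sL)$. Since $L\in\sM(\sL,F)$ we have $F\cap L\neq\varnothing$, so the collection of proper faces $G\in\sF'(P)$ with $G\cap L\neq\varnothing$ is nonempty (it contains $F$); I would pick such a $G$ of smallest dimension and set $A:=G\cap L$. I claim this forces $\dim G=0$, so that $A=G$ is a single point lying in $L$, which is exactly $L\cap G=G$.

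The first step is to show $A\subseteq\mathrm{relint}(G)$. Fix $x\in A$ and let $G_x$ be the unique face of $P$ whose relative interior contains $x$ (these relative interiors partition $P$ by \cite[Theorem~2.1.2]{schn2}). As $x\in G$ and $G$ is a face, we have $G_x\subseteq G$, so $G_x$ is again a proper face, and $x\in L$ gives $G_x\cap L\neq\varnothing$. Minimality of $\dim G$ forces $\dim G_x\geq\dim G$, whereas $G_x\subseteq G$ forces $\dim G_x\leq\dim G$; hence $G_x=G$ and $x\in\mathrm{relint}(G)$. Since $x\in A$ was arbitrary, $A\subseteq\mathrm{relint}(G)$.

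Next I would argue by contradiction that $\dim G\geq 1$ is impossible. In that case every extreme point of $G$ lies in its relative boundary, hence outside $A\subseteq\mathrm{relint}(G)$. By \eqref{eq:F_rep} we have $G=\conv\big(\bigcup_{L'\in\sM(\sL,G)}(G\cap L')\big)$, a convex hull of finitely many compact pieces, so every extreme point of $G$ belongs to one of these pieces; as none of them lies in $G\cap L=A$, each belongs to some $G\cap L'$ with $L'\neq L$. Therefore $G=\conv\big(\bigcup_{L'\in\sM(\sL,G)\setminus\{L\}}(G\cap L')\big)$. Applying the elementary dimension bound used in Remark~\ref{rem:equality} to this sub-hull, and then the general-position relation \eqref{eq:def_gp_formula} for the face $G$ (which holds with equality by Remark~\ref{rem:equality}), I obtain
\begin{displaymath}
1+\dim G\;\leq\;\sum_{L'\in\sM(\sL,G)\setminus\{L\}}\big(1+\dim(G\cap L')\big)\;=\;(\dim G+1)-\big(1+\dim A\big)\;=\;\dim G-\dim A,
\end{displaymath}
that is $1+\dim A\leq 0$, which is absurd. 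Hence $\dim G=0$ and $G=A\subseteq L$.

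The main obstacle is the middle claim $A\subseteq\mathrm{relint}(G)$: a priori $G\cap L$ could be a lower-dimensional slice cutting through the interior of $G$, in which case no smaller face of $P$ would capture a point of $L$. The minimal-dimension choice of $G$ is precisely what rules this out, and it is the general-position equality on the \emph{whole} face $G$ — which forces the complementary pieces $G\cap L'$ ($L'\neq L$) to already span $G$ — that ultimately contradicts $\dim G\geq 1$. Finally, once $G\subseteq L$ is established, applying \eqref{eq:def_gp_formula} with $A=G$ gives $\sum_{L'\in\sM(\sL,G)}(1+\dim(G\cap L'))=\dim G+1$ with the single term $1+\dim G$ coming from $L$, whence $\sM(\sL,G)=\{L\}$; thus $\{L\}$ is a vertex of the family, recovering the informal statement preceding the lemma.
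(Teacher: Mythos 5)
Your proof is correct, and while it rests on the same counting device as the paper's proof (the equality case of \eqref{eq:def_gp_formula} from Remark~\ref{rem:equality}, combined with the bound that a convex hull of finitely many sets has dimension at most $\sum_i(1+\dim C_i)-1$), it is organised around a different key lemma. The paper argues by descending induction: assuming $L\cap F\neq F$, it first proves the representation \eqref{eq:Fp}, namely $F=\conv\big(\bigcup_i(L_i\cap F')\big)$ with $F'$ the relative boundary of $F$, by noting that any positive convex combination realising a point of $F'$ must have all its constituents in a proper face of $F$; the termwise comparison of $\sum_i\big(1+\dim(L_i\cap F')\big)\geq \dim F+1=\sum_i\big(1+\dim(L_i\cap F)\big)$ then forces every $L_i\in\sM(\sL,F)$ --- in particular $L$ itself --- to meet $F'$, so $L$ meets a face of strictly smaller dimension and the induction descends until $L\cap G=G$. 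You instead take a proper face $G$ of minimal dimension meeting $L$, use the face-partition theorem to conclude $G\cap L\subseteq\mathrm{relint}(G)$, and then invoke Minkowski's theorem: the extreme points of $G$ lie in the compact pieces $G\cap L'$ but avoid $\mathrm{relint}(G)$, hence avoid $L$, so the sets $L'\neq L$ alone span $G$, and the same dimension count yields $1+\dim(G\cap L)\leq 0$, a contradiction unless $\dim G=0$. In effect the two proofs trade one standard fact for another (the paper: a face is the hull of its intersections with the $L_i$ taken along its relative boundary; you: extreme points of a hull of compacta lie in the compacta), and your minimal-counterexample formulation replaces the explicit iteration through lower-dimensional faces. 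A small bonus of your version is that it delivers $\sM(\sL,G)=\{L\}$ on the spot, i.e., $L$ is literally a $0$-dimensional face of the family; the paper extracts exactly this consequence only afterwards, by a separate argument inside the proof of Corollary~\ref{cor:combinatorial}.
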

\begin{proof}
  Assume that $L\cap F$ is a strict subset of $F$ since otherwise
  there is nothing to prove.  Let
  $\sM(\sL,F):=\{L_0,L_1,\dots,L_k\}$, where $L_0:=L$.  Denote
  by $F'$ the relative boundary of $F$. By \eqref{eq:F_rep}, we have
  \begin{equation}
    \label{eq:Fp}
    F=\conv\bigg(\,\bigcup_{i=0}^k (L_i\cap F')\bigg). 
  \end{equation}
  Indeed, by \eqref{eq:F_rep}, each point $x\in F'$ is the convex
  combination of points $y_i\in L_i\cap F$, $i=0,\dots,k$.  None of
  these points belongs to the relative interior of $F$. Thus, $F'$ is
  a subset of the right-hand side of \eqref{eq:Fp}.  Since $F$ is a
  subset of the convex hull of $F'$, we obtain \eqref{eq:Fp}.
  
  Denote $m_i:=\dim(L_i\cap F)$, $m'_i:=\dim(L_i\cap F')$, and
  $m:=\dim(F)$.  By Remark~\ref{rem:equality}, 
  \begin{displaymath}
    \sum_{i=0}^k (1+m_i)=m+1. 
  \end{displaymath}
  By \eqref{eq:Fp}, each point from $F$ is a convex combination of
  points from $L_i\cap F'$, $i=0,\dots,k$. Therefore,
  \begin{displaymath}
    \sum_{i=0}^k (1+m'_i)\geq m+1.
  \end{displaymath}
  Thus, $m_i=m'_i$ for all $i=0,\dots,k$, meaning that
  $L_i\cap F'\neq\varnothing$ and, in particular,
  $L\cap F'\neq\varnothing$. Hence, there exists
  \begin{displaymath}
    \tilde{F}\in \sF'(F)\subset\sF'(\conv(\sL)),
  \end{displaymath}
  such that $L\cap \tilde{F}\neq\varnothing$. Here, the second
  inclusion follows from \cite[Theorem~2.1.1]{schn2}.  The dimension of
  $\tilde{F}$ is at most $m-1$. By induction, reducing the dimension at
  each step, we find a proper face $G\in \sF'(\conv(\sL))$ such that
  $L\cap G=G$.
\end{proof}

\begin{corollary}
  \label{cor:combinatorial}
  Let $\sL$ be a family of convex bodies in general position.  Then
  \begin{displaymath}
    \fv_k(\sL)\leq \binom{\fv_0(\sL)}{k+1}, \quad k=0,\dots,d-1.
  \end{displaymath}
\end{corollary}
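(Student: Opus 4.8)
The plan is to reduce the estimate to one clean structural fact: \emph{every set $L\in\sL$ that appears in a $k$-dimensional face of the family is itself a vertex}, i.e.\ a $0$-dimensional face. Granting this, an element of $M_k(\sL)$ is by definition an unordered $(k+1)$-tuple of pairwise distinct members of $\sL$, and each of them is now known to be a vertex; hence $M_k(\sL)$ embeds into the set of $(k+1)$-element subsets of the $\fv_0(\sL)$ vertices, and counting these subsets yields $\fv_k(\sL)\le\binom{\fv_0(\sL)}{k+1}$ directly. So the whole corollary hinges on the displayed structural fact.

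To establish it, I would fix a proper face $F\in\sF'(\conv(\sL))$ realising a $k$-dimensional face, write $\sM(\sL,F)=\{L_0,\dots,L_k\}$, and single out one member $L:=L_i$. First I would apply Lemma~\ref{lemma:vertices} to $F$ and $L$ to obtain a proper face $G\in\sF'(\conv(\sL))$ with $L\cap G=G$, that is, $G\subset L$. Because $G$ is a nonempty closed convex face contained in some exposed face of $\conv(\sL)$, it is an admissible argument $A$ in Definition~\ref{def:gp}, so $\sM(\sL,G)$ is defined and the general position inequality \eqref{eq:def_gp_formula} holds with $A=G$.

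The decisive step is to turn the inclusion $G\subset L$ into the equality $\sM(\sL,G)=\{L\}$. Since $G\subset L$ gives $G\cap L=G$, the summand contributed by $L$ in \eqref{eq:def_gp_formula} equals $1+\dim(G)$, which already exhausts the right-hand side $\dim(G)+1$. Were there a further set $L'\in\sM(\sL,G)$ with $L'\ne L$, the nonempty intersection $G\cap L'$ would add a summand $1+\dim(G\cap L')\ge 1$, pushing the left-hand side to at least $\dim(G)+2$ and contradicting \eqref{eq:def_gp_formula}. Thus $\sM(\sL,G)=\{L\}$, so $\{L\}\in M_0(\sL)$ and $L$ is a vertex, completing the structural fact.

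I expect the only genuine obstacle to be the bookkeeping in the middle paragraph, namely verifying that the face $G$ delivered by Lemma~\ref{lemma:vertices} really is a legitimate input for the general position inequality; this reduces to recalling that proper faces are closed, convex, and contained in exposed faces, so that $\sM(\sL,G)$ is well defined. The dimension count of the last paragraph and the concluding binomial estimate are then immediate.
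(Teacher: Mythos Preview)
Your proposal is correct and follows essentially the same approach as the paper: use Lemma~\ref{lemma:vertices} to find a proper face $G$ with $G\subset L$, then apply the general position inequality \eqref{eq:def_gp_formula} at $A=G$ to rule out any second set $L'\in\sM(\sL,G)$, concluding that every member of a $k$-dimensional face is itself a vertex and hence $\fv_k(\sL)\le\binom{\fv_0(\sL)}{k+1}$. The only cosmetic difference is that you foreground the structural fact before proving it, whereas the paper establishes it inline.
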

\begin{proof}
  Let $L\in \sM(\sL,F)$ for some $F\in\sF'(\conv(\sL))$. By
  Lemma~\ref{lemma:vertices}, there exists a $G\in\sF'(\conv(\sL))$ such
  that $L\cap G=G$. Then $\sM(\sL,G)$ contains $L$. Assume
  that $\sM(\sL,G)$ contains another set $L'\in\sL$, which is
  different from $L$. Then $L'\cap G\neq \varnothing$. By
  \eqref{eq:def_gp_formula},
  \begin{displaymath}
    \big(1+\dim(L\cap G)\big)+\big(1+\dim(L'\cap G)\big)\leq \dim(G)+1,
  \end{displaymath}
  which is a contradiction.  Thus, if a $(k+1)$-tuple of sets from
  $\mathcal{L}$ form a $k$-dimensional face of $\sL$, then each of
  these sets is a $0$-dimensional face of $\sL$, and the claim
  follows.
\end{proof}

\subsection{Families of strictly convex sets}
\label{sec:strictly-convex-sets}

For strictly convex sets, that is, for sets which do not contain any
proper segment on the boundary, the definition of general position can
be replaced by an equivalent one, which is much simpler.

\begin{proposition}
  \label{prop:def_gp_strictly_convex}
  If all sets in $\sL$ are strictly convex, Definition~\ref{def:gp} is
  equivalent to any of the following.
  \begin{itemize}
  \item[(i)] For all $m=0,\dots,d-1$, and each $m$-dimensional face
    $F_m$ of $\conv(\sL)$, the family $\sM(\sL,F_m)$ is finite and
    \begin{equation}
      \label{eq:def_gp_formula2}
      \sum_{L\in \sM(\sL,F_m)}\big(1+\dim(F_m\cap L)\big)= m+1.
    \end{equation}  
  \item[(ii)] For all $m=0,\dots,d-1$, and each $m$-dimensional face
    $F_m$ of $\conv(\sL)$, exactly $m+1$ sets among $\sL$ intersect
    $F_m$.
  \item[(iii)] For all $m=0,\dots,d-1$, and each $m$-dimensional exposed face
    $F_m$ of $\conv(\sL)$, exactly $m+1$ sets among $\sL$ intersect
    $F_m$.
  \end{itemize}
\end{proposition}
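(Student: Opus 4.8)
The plan is to isolate the one structural consequence of strict convexity and then close the cycle $\text{Definition~\ref{def:gp}}\Rightarrow(\mathrm{i})\Rightarrow(\mathrm{ii})\Rightarrow(\mathrm{iii})\Rightarrow\text{Definition~\ref{def:gp}}$. The observation I would establish first is that, if $L\in\sL$ is strictly convex and $F$ is any proper face of $\conv(\sL)$ (in particular, any closed convex subset $A$ of a proper exposed face), then $L\cap F$ is empty or a single point, so $\dim(L\cap F)\le 0$. Indeed, the defining property of a face of $\conv(\sL)$ shows directly that $L\cap F$ is itself a face of $L$; since $\partial L$ contains no proper segment, the only positive-dimensional face of $L$ is $L$ itself, and $L$ cannot be contained in a proper face $F$ of $\conv(\sL)$ (a body would have too large a dimension, and a lower-dimensional strictly convex set is a single point). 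Hence $L\cap F$ is a proper face of $L$, i.e.\ a point, or empty.

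Granting this, I would observe that $(\mathrm{i})$ and $(\mathrm{ii})$ say literally the same thing: every $L\in\sM(\sL,F_m)$ contributes $1+\dim(F_m\cap L)=1$ to the sum in \eqref{eq:def_gp_formula2}, so that sum equals the cardinality of $\sM(\sL,F_m)$ and the equality ``$=m+1$'' becomes ``exactly $m+1$ sets meet $F_m$''. The implication $\text{Definition~\ref{def:gp}}\Rightarrow(\mathrm{i})$ is then just Remark~\ref{rem:equality}, which upgrades the inequality in \eqref{eq:def_gp_formula} to an equality on faces, combined with the finiteness of $\sM(\sL,F_m)$ that is already built into Definition~\ref{def:gp}; and $(\mathrm{ii})\Rightarrow(\mathrm{iii})$ is immediate because exposed faces are faces.

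The substance lies in $(\mathrm{iii})\Rightarrow\text{Definition~\ref{def:gp}}$. I would fix a proper exposed face $F$ of $\conv(\sL)$, of dimension $f$, and a closed convex subset $A\subseteq F$. By the pivotal observation each summand in \eqref{eq:def_gp_formula} satisfies $\dim(A\cap L)=0$, so the inequality reduces to the counting bound $|\sM(\sL,A)|\le\dim(A)+1$; finiteness is automatic because $\sM(\sL,A)\subseteq\sM(\sL,F)$ and $|\sM(\sL,F)|=f+1$ by $(\mathrm{iii})$. Writing $\sM(\sL,F)=\{L_0,\dots,L_f\}$ and $L_i\cap F=\{p_i\}$, the representation \eqref{eq:F_rep} gives $F=\conv\{p_0,\dots,p_f\}$; an $f$-dimensional convex hull of $f+1$ points forces them to be affinely independent, so $F$ is an $f$-simplex with vertices $p_0,\dots,p_f$. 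Since $A\subseteq F$, one has $L_i\cap A\neq\varnothing$ exactly when $p_i\in A$, and the points $\{p_i:p_i\in A\}\subseteq A$ are affinely independent; hence $\dim(A)\ge|\{i:p_i\in A\}|-1=|\sM(\sL,A)|-1$, which is the required bound.

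The main obstacle is precisely this last direction, where a hypothesis stated only for exposed faces must be propagated to \emph{all} closed convex subsets $A$ in Definition~\ref{def:gp}. The bridge is the simplex structure above: under strict convexity and $(\mathrm{iii})$, every proper exposed face is a genuine simplex whose vertices are the single-point traces of the members of $\sL$ meeting it, and the affine independence of these vertices is exactly what converts the crude estimate $|\sM(\sL,A)|\le f+1$ into the sharp, dimension-sensitive bound $|\sM(\sL,A)|\le\dim(A)+1$ valid for every convex $A$ inside the face.
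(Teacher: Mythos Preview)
Your proof is correct and follows essentially the same route as the paper. Both arguments hinge on the observation that $L\cap F$ is a singleton for strictly convex $L$ and proper faces $F$, which collapses \eqref{eq:def_gp_formula2} to a cardinality count; for the crucial implication $(\mathrm{iii})\Rightarrow\text{Definition~\ref{def:gp}}$, both you and the paper exploit that $F=\conv\{p_0,\dots,p_f\}$ with $\dim F=f$ forces the $p_i$ to be affinely independent, though you phrase this as a direct bound $\dim A\ge|\{i:p_i\in A\}|-1$ while the paper packages the same dimension count as a proof by contradiction.
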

\begin{proof}
  Definition~\ref{def:gp} implies (i), since every face is a closed
  convex subset of some exposed face and in view of
  Remark~\ref{rem:equality}.

  Further, all non-empty sets of the form $F_m\cap L$ for
  $F_m\in\sF_m(\conv(\sL))$ and $m=0,\dots,d-1$ are singletons due to
  the imposed strict convexity. Hence $\dim(F_m\cap L)=0$ and
  \eqref{eq:def_gp_formula2} is equivalent to the fact that the
  cardinality of $\sM(\sL,F_m)$ is equal to $m+1$. This proves
  equivalence of (i) and (ii).  Since every exposed face is a face,
  (ii) implies (iii).

  It remains to prove that (iii) implies Definition~\ref{def:gp}. This
  is accomplished by contradiction. Assume that (iii) holds but there
  exists a closed convex subset $A$ of an exposed face $F$ such that
  $\dim(F)=m$, $\dim(A)=k$, $k=0,\dots,m$, $m=0,\dots, d-1$, and
  \begin{equation}
    \label{eq:proof_gp_contra}
    \card(\sM(\sL,A))=\sum_{L\in \sM(\sL,A)}\big(1+\dim(A\cap L)\big)\geq k+2.
  \end{equation}
  From condition (iii) it follows that \eqref{eq:def_gp_formula2}
  holds with $F_m=F$.  Therefore, $k<m$.  By the condition imposed in
  (iii) and strict convexity, $F$ intersects exactly $m+1$ sets in
  $\sL$ and these intersections are singletons, say
  $x_1,x_2,\dots,x_{m+1}$.  Moreover, at least $k+2$ among these
  singletons lie in $A\subset F$ by
  \eqref{eq:proof_gp_contra}. Without loss of generality, assume that
  $x_1,\dots,x_{k+2}\in A$. In view of \eqref{eq:F_rep},
  \begin{displaymath}
    F=\conv\{x_1,\dots,x_{m+1}\}\subset \conv\big(A\cup\{x_{k+3},\dots,x_{m+1}\}\big).
  \end{displaymath}
  Evaluating the dimension on both sides yields
  \begin{displaymath}
    1+m=1+\dim(F)\leq 1+\dim(A)+\sum_{j=k+3}^{m+1}\big(1+\dim(\{x_j\})\big)=1+k+(m-k-1)=m,
  \end{displaymath}
  which is a contradiction. The proof is complete.
\end{proof}

\begin{remark}
  Each point on the topological boundary of $\conv(\sL)$ is a positive
  linear combination of points from uniquely identified sets
  $L_{i_1},\dots,L_{i_{k+1}}\in\sL$. For strictly convex sets, the
  $k$-th component of the $\Fv$-vector can be equivalently defined as
  the number of different $(k+1)$-tuples in such collections for all
  points from the boundary of $\conv(\sL)$.
\end{remark}

\subsection{Examples}
\label{sec:examples}

\begin{example}[Polytopes]\label{example:poly}
  Let $\sL$ be a finite collection of singletons $L_i:=\{x_i\}$,
  $i=1,\dots,l$, so that its convex hull $\conv(\sL)$ is a
  polytope. Since singletons are strictly convex sets, the equivalent
  definition of the general position given by part (ii) of
  Proposition~\ref{prop:def_gp_strictly_convex} is applicable.  The
  general position for $\sL$ means that each $m$-dimensional face of
  this polytope contains exactly $m+1$ singletons for all
  $m=0,\dots,d-1$. Then $M_k(\sL)$ is the set of all $k$-dimensional
  faces of the polytope $\conv(\sL)$ (here we identify a face of a
  polytope with its extreme points), and $\Fv(\sL)$ is the usual
  $f$-vector\footnote{Throughout the paper we adopt the following
    convention: $\mathbf{f}$ is used to denote the usual $f$-vector of
    a polytope, while $\Fv$ is used to denote the $\Fv$-vector of a
    family of convex bodies in the sense of
    Definition~\ref{def:facial_structure}.} of the polytope
  $\conv(\sL)$. Thus, for singletons we have
  $\Fv\big(\{\{x_1\},\{x_2\},\dots,\{x_l\}\}\big)=\mathbf{f}\big(\conv\{x_1,x_2,\dots,x_l\}\big)$. Note
  that for sets of singletons the general position is usually defined
  by requiring that no $m+2$ points lie in an affine subspace of
  dimension $m$, see, for example,
  \cite[Lemma~4.1]{kab:mar:tem:19}. This complies with
  Definition~\ref{def:gp} imposed on $\sL$ and all its subfamilies.

  A similar situation occurs for a collection $\sL$ of sets
  $K_i:=[0,x_i]$, $i=1,\dots,l$, being segments with end-points at the
  origin and $x_i$. Then $\sL$ is in general position if and only if
  the points $\{0,x_1,\dots,x_l\}$ are in general position, and the
  $\Fv$-vector of $\sL$ is the $f$-vector of the polytope
  $\conv(\{x_1,x_2,\ldots,x_l\})$.
\end{example}

\begin{example}
  Let $\sL:=\{L\}$ consist of a single convex body $L$. Then
  $\Fv(\sL)=(1,0,\dots,0)$, no matter if $L$ is strictly convex or
  not.  This set $L$ is a vertex, no matter whether $L$ possesses
  higher-dimensional faces itself.
\end{example}

\begin{example}
  Let $\sL$ consist of all singletons from the boundary of the unit
  ball. Then $M_0(\sL)$ is an uncountably infinite collection of
  vertices of $\sL$,  that is,  faces of $\sL$ of dimension zero, and there
  are no faces of $\sL$ of dimension $1,2,\dots,d-1$. Thus,
  $\fv_0(\sL)=\infty$ and all other components of the $\Fv$-vector
  vanish. 
\end{example}

\begin{example}[Vertices]
  Assume that the sets in $\sL$ are in general position and let $A$ be
  an exposed point (that is, an exposed face of dimension zero) of $\conv(\sL)$.
  By \eqref{eq:def_gp_formula}, $\sM(\sL,A)$ consists of a single
  set, say, $L$. Thus, $L\in M_0(\sL)$ and, according to
  Definition~\ref{def:facial_structure}, $L$ is a vertex. Note
  that the facial structure of $L$ is not important at all here, in
  particular, $L$ may contain faces (in the usual sense) of arbitrary
  dimensions.
  For example, on the plane, let $L_1$ be the origin and $L_2$ be the
  segment $[(1,0),(0,1)]$, then $\fv_0(\sL)=2$ and both $L_1$ and
  $L_2$ are vertices. Furthermore, $\fv_1(\sL)=1$,
  since $M_1(\sL)=\{L_1,L_2\}$.  
\end{example}

\begin{example}
  Let $\sL$ be the collection of vertices of the square $[0,1]^2$ on
  the plane. Then $\Fv(\sL)=(4,4)$. Let $\sL'$ consist of
  the segment joining the origin and $(0,1)$ and the two other
  vertices of the unit square. Then $\Fv(\sL')=(3,3)$ is different
  from $\Fv(\sL)$, despite the two collections share the same convex
  hull. 
\end{example}

\begin{example}
  Assume that all sets from $\sL$ are strictly convex and the union of
  the sets from $\sL$ is a convex closed set. Then each face of
  $\conv(\sL)$ is also a face of at least one $L\in\sL$. If a point
  $x$ on the boundary of this union belongs to both $L_1$ and $L_2$
  from $\sL$, then the general position condition is violated. Hence,
  $\sL$ is in general position if and only if there exists a set
  $L_0\in\sL$ such that all members of $\sL\setminus\{L_0\}$ are
  subsets of the interior of $L_0$.
\end{example}

\section{\texorpdfstring{$\Fv$}{f}-vectors of
  \texorpdfstring{$K$}{K}-strongly convex  sets} 
\label{sec:f-vector-ball}

The aim of this section is to develop a notion of $k$-dimensional
faces of a $K$-strongly convex set in $\R^d$ of any dimension
$k\in\{0,1,\dots,d-1\}$. It has already been noted in the literature
that this task is quite nontrivial even in case of ball convex sets,
that is, when $K$ is a Euclidean ball, see a discussion at the
beginning of Section~6 in \cite{bez:lan:nas:07} and, in particular,
Example~6.1 therein.  We employ the approach developed in
Section~\ref{sec:boundary-structure-k}.

\subsection{Families of convex bodies associated with
  \texorpdfstring{$K$}{K}-hulls} 
\label{sec:famil-conv-bodi}

In the following, fix $K\in\sK^d_{(0)}$.  Let $A$ be a subset of the
interior of a translate of $K$. Introduce the family of sets
\begin{equation}
  \label{eq:9}
  \sL_A:=\big\{(K-y)^o: y\in A\big\}.
\end{equation}
By \eqref{eq:K-A}
\begin{equation}
  \label{eq:LA}
  (K\ominus A)^o =\cl \conv(\sL_A).
\end{equation}
If the sets in $\sL_A$ are in general position, it is possible to
define the $\Fv$-vector
$\Fv(\sL_A)=\big(\fv_0(\sL_A),\dots,\fv_{d-1}(\sL_A)\big)$ of the
family $\sL_A$.  If $A$ is a finite set, then the closure on the
right-hand side of \eqref{eq:LA} can be omitted, and $\Fv(\sL_A)$ has all
finite components.

\begin{example}
  Let $A:=\big\{(0,a),(0,-a)\big\}$ on $\R^2$, where $a\in(0,1)$, and
  let $K$ be the unit Euclidean disk. Then $\Fv(\sL_A)=(2,1)$.  If
  $A:=\big\{(0,0,a),(0,0,-a)\big\}$ in $\R^3$ and $K$ is the unit
  ball, then $\Fv(\sL_A)=(2,1,0)$.
\end{example}

In order to characterise the cases when $\sL_A$ is in general
position, we need several further concepts from convex geometry.
Recall that the \emph{normal cone} to a convex body $L\in\sK^d$ at
$x\in L$ is defined by
\begin{displaymath}
  \label{eq:normal_cone_def}
  N(L,x):=\big\{u\in\R^d\setminus\{0\}: x\in H(L,u)\big\}\cup\{0\},
\end{displaymath}
where 
\begin{displaymath}
  \label{eq:support_hyperplane_def}
  H(L,u):=\big\{x\in\R^d:\; \langle x,u\rangle = h(L,u)\big\}
\end{displaymath}
is the supporting hyperplane to $L$ with normal $u$.  The
normal cone is nontrivial only if $x\in\partial L$.  Denote by
\begin{displaymath}
  F(L,u):=L\cap H(L,u)
\end{displaymath}
the \emph{support set} of $L$ in direction $u$; this set is a
singleton $\{y\}$ if $L$ is strictly convex and we write $F(L,u)=y$ in
this case. The convex body $L$ is said to be \emph{regular} if
$N(L,x)$ is one-dimensional for all $x\in\partial L$, see
\cite[p.~83]{schn2}. For $L\in\sK^d_{(0)}$, this is equivalent to the
fact that the boundary of $L$ is $C^1$, see
\cite[Theorem~2.2.4]{schn2}.

If $F$ is a face of $L$, then $N(L,F):=N(L,x)$ for any $x$ in the
relative interior of $F$, see \cite[Section~2.2]{schn2}. Furthermore, the
\emph{conjugate face} to $F$ is defined as
\begin{displaymath}
  \widehat{F}:=\big\{x\in L^o: \langle x,y\rangle =1 \; \text{for all}\;
  y\in F\big\},
\end{displaymath}
see \cite[Section~2.1]{schn2}.  If $F$ is $(d-1)$-dimensional, then
$\widehat{F}$ arises as a solution of $d$ independent linear
equations and so is a singleton.  By \cite[Lemma~2.2.3]{schn2},
\begin{equation}
  \label{eq:6}
  N(L,F)=\mathrm{pos}\,\widehat{F} \cup\{0\},
\end{equation}
where $\mathrm{pos}\,\hat{F}$ is the positive hull of $\hat{F}$, that
is, the family of all linear combinations of points from $\hat{F}$
with nonnegative coefficients.

\begin{lemma}
  \label{lemma:gen-pos}
  Assume that $K\in\sK^d_{(0)}$ is a strictly convex regular convex body.
  If $A\subset \Int K$ is a finite set, then the family $\sL_A$ is in
  general position if and only if, for each $x\in\R^d$ such that
  $A\subseteq x+K$ and $A$ has a nonempty intersection with
  $\partial K+x$, this intersection is a finite set
  $\{y_1,\dots,y_k\}$ and $k$ is equal to the dimension of the convex
  hull of the union of the normal cones $N(K+x,y_i)$, $i=1,\dots,k$.
\end{lemma}
\begin{proof}
  For every $x\in\R^d$,  put
  \begin{displaymath}
    \mathcal{I}_A(x):=(\partial K+x)\cap A,
  \end{displaymath}
  and note that
  $-x\in \partial \big(\cap_{y\in\ A}(K-y)\big)=\partial (K\ominus A)$
  if and only if $A\subseteq x+K$ and
  $\mathcal{I}_A(x)\neq \varnothing$.  By \cite[Theorem~2.2.1]{schn2},
  for every $x\in\R^d$ such that $A\subseteq x+K$ and
  $\mathcal{I}_A(x)\neq \varnothing$, we have
  \begin{equation}
    \label{eq:NF}
    N(K\ominus A,-x)=N\bigg(\bigcap_{y\in A} (K-y),-x\bigg)
    =\conv \bigg(\bigcup_{y\in A} N(K-y,-x)\bigg)
    =\conv \bigg(\bigcup_{y\in \mathcal{I}_A(x)} N(K+x,y)\bigg).
  \end{equation}
  This relation will be of major importance in subsequent arguments.
 
  It follows from \eqref{eq:6} that regularity and strict convexity of
  $K$ imply regularity and strict convexity of the polar set $K^o$,
  see \cite[Remark~1.7.14]{schn2}.  Thus, for each $y\in \Int K$, the
  set $(K-y)^o$ is regular and strictly convex. By
  Proposition~\ref{prop:def_gp_strictly_convex}(iii), the family
  $\sL_A$ is in general position if and only if, for all
  $m=0,\dots,d-1$, each $m$-dimensional exposed face $F_m$ of
  $\conv(\sL_A)=(K\ominus A)^o$ intersects exactly $m+1$ sets from
  $\sL_A$.
  
  In view of \cite[Theorem~2.1.4]{schn2}, the second conjugate of each
  exposed face $F$ coincides with $F$.  Thus, by \eqref{eq:6}, the
  $m$-dimensional exposed faces of $(K\ominus A)^o$ are in one-to-one
  correspondence with their conjugate faces (exposed faces of
  $(K\ominus A)$) having $(m+1)$-dimensional normal cones.  By
  Lemma~\ref{lemma:str-convex}, $K\ominus A$ is strictly convex, so that
  these conjugate faces are singletons.  Thus, 
  the family $\sL_A$ is in general position if and only if, for every
  singleton $-x\in\partial(K\ominus A)$ such that
  $\dim N(K\ominus A,-x)=m+1$, $m=0,\dots,d-1$, exactly $m+1$ sets
  among $(\partial K-y)_{y\in A}$ contain $-x$.  Equivalently, the
  family $\sL_A$ is in general position if and only if for every
  $-x\in\partial(K\ominus A)$ such that $\dim N(K\ominus A,-x)=m+1$,
  $m=0,\dots,d-1$, we have $\card(\mathcal{I}_A(x))=m+1$. By
  \eqref{eq:NF}, the latter is equivalent to the
  following: for every $-x\in\partial(K\ominus A)$, that is, for every
  $x\in\R^d$ such that $A\subseteq x+K$ and
  $\mathcal{I}_A(x)\neq \varnothing$, we have
  \begin{displaymath}
    \dim \bigg(\conv \bigg(\bigcup_{y\in \mathcal{I}_A(x)} N(K+x,y)\bigg)\bigg)
    =\card(\mathcal{I}_A(x)).
  \end{displaymath}
  The proof is complete.
\end{proof}

\begin{remark}
  \label{rem:gen_pos_necessary}
  Letting $m=d-1$ in Lemma~\ref{lemma:gen-pos}, we obtain a necessary
  condition for the general position, saying that the cardinality of
  $A\cap (\partial K+x)$ is at most $d$ for all $x\in\R^d$.
\end{remark}

\begin{definition}
  \label{def:fvL}
  Let $Q:=\bhoper{K}{A}$, where $A$ is a subset of the interior of
  $K\in\sK^d_{(0)}$, such that the family $\sL_A$ defined at
  \eqref{eq:9} is in general position.  Then the \emph{$\Fv$-vector}
  $\Fv(Q)$ of $Q$ is defined as $\Fv(\sL_A)$.
\end{definition}

Note that in Definition~\ref{def:fvL} it is not feasible to work with
the family $\sL_Q$ defined by \eqref{eq:9} as the family of sets
$(K-y)^o$ for all $y\in Q$, because this family is not in general
position unless $Q$ is a singleton.  Indeed, otherwise, the boundary
of $Q$ contains a $(d-1)$-dimensional part of $\partial K-x$ for some
$x$ and so $Q$ intersects $\partial K-x$ at infinitely many points,
contrary to Lemma~\ref{lemma:gen-pos}.

\subsection{Facial structure of \texorpdfstring{$K$}{K}-hulls}
\label{sec:facial-structure-k}

A supporting $K$-sphere of $Q:=\bhoper{K}{A}$ is the set $x+\partial K$
such that $Q\subset x+K$ and $Q\cap (x+\partial
K)\neq\varnothing$. The set $Q\cap (x+\partial K)$ is said to be an
\emph{exposed $K$-face} of $Q$, see \cite{jah:mar:ric17}.
Note that these definitions in \cite{jah:mar:ric17} presume that $K$
is strictly convex and origin symmetric.

If $x+\partial K$ is an exposed $K$-face of $Q$ and
$Q\cap (x+\partial K)$ has a strictly positive $(d-1)$-dimensional
Hausdorff measure, then $Q\cap (x+\partial K)$ is called a
\emph{$K$-facet} of $Q$.  For $K$ being a Euclidean ball, this
definition was used in \cite{fod19} to describe the facial structure
of ball convex sets. Each $K$-facet contains at least $d$ affinely
independent points, but the inverse implication may fail, as the
following example shows. 

\begin{example}
  \label{ex:arc}
  Let $K$ be the unit Euclidean ball in $\R^3$, and let $y,z$ be two
  distinct points in the interior of $K$. The $K$-hull $Q$ of
  $A:=\{y,z\}$ is the intersection of all unit balls having $y$ and
  $z$ on the boundary. Such a ball $K+x$ intersects $Q$ along the arc
  of its great circle. While this arc contains 3 affinely independent
  points, its $2$-dimensional Hausdorff measure vanishes and so it
  is not a $K$-facet of $Q$, yet it is an exposed $K$-face of $Q$.
  In view of \eqref{eq:1}, $K\ominus Q=(K-y)\cap(K-z)$ and the polar
  to $K\ominus Q$ is the convex hull of the family
  $\sL_A:=\{(K-y)^o,(K-z)^o\}$. The boundary of $\conv(\sL_A)$ is
  composed of the parts of the boundaries of $(K-y)^o$ and $(K-z)^o$
  and an infinite number of one-dimensional faces, being segments
  joining points of $(K-y)^o$ and $(K-z)^o$. Then
  $\fv_{2}(Q)=\fv_{2}(\sL_A)=0$, which corresponds to the absence of
  $K$-facets in $Q$.
\end{example}

\begin{example}
  \label{ex:square}
  Let $K:=[-1,1]^2$ on $\R^2$, and let
  $A:=\{(a,0),(0,b),(-c,0),(-d,0)\}$ with $a,b,c,d\in(0,1)$.  Then,
  $Q=\bhoper{K}{A}=[-c,a]\times[-d,b]$, and 
  \begin{displaymath}
    K\ominus Q=[c-1,1-a]\times[d-1,1-b].
  \end{displaymath}
  The polar body to a polytope obtained as the intersection of the
  half-spaces $\{x:\langle x,u_i\rangle\leq t_i\}$, $i=1,\dots,m$,
  is equal to the convex hull of the points $t_i^{-1}u_i$, $i=1,\dots,m$.
  Hence,
  \begin{displaymath}
    (K\ominus Q)^o
    =\conv\big\{((1/(c-1),0),(0,1/(d-1)),(1/(1-a),0),(0,1/(1-b))\big\}. 
  \end{displaymath}
  Note that
  \begin{displaymath}
    (K+(a,0))^o=\conv\big\{(1/(1+a),0),(0,1),(1/(a-1),0),(0,-1)\big\}.
  \end{displaymath}
  Similar calculations for other points of $A$ show that $\sL_A$ is in
  general position and $\Fv(\sL_A)=(4,4)$. The $K$-hull $Q$ of $A$ is
  the intersection of $K+(a,b)$ and $K+(-c,-d)$, or other two
  translations of $K$.  In this case, $Q$ has two $K$-facets.
\end{example}
  
While Example~\ref{ex:square} shows that the number of $K$-facets of
$Q:=\bhoper{K}{A}$ (as defined following \cite{fod19} and
\cite{jah:mar:ric17}) may be different from the $(d-1)$-th component of
the $\Fv$-vector,  Lemmas~\ref{lemma:facets} and \ref{lemma:fd} below 
provide conditions ensuring that these quantities coincide.

For a convex body $K\in\sK^d$, denote by $S_{d-1}(K,\cdot)$ the
\emph{surface area measure} of $K$, see \cite[p.~214]{schn2}. The
surface area measure is a measure on the unit sphere $\Sphere$ in
$\R^d$ with the total mass being the surface area (that is, the
$(d-1)$-dimensional Hausdorff measure) of $K$.  If $K$ is regular,
then $S_{d-1}(K,A)$ is the surface area of the part of the boundary of
$K$ with unit normals belonging to the Borel set $A$ on the unit
sphere.

\begin{lemma}
  \label{lemma:facets}
  Assume that $K\in\sK^d_{(0)}$ is strictly convex and regular convex
  body, which is also a generating set.  Let $A\subset \Int K$ be a finite set such that $\sL_A$ given by \eqref{eq:9} is in general position.  If $Q$ is the $K$-hull of $A$,
  then the number of $K$-facets of $Q$ is equal to the number of
  $(d-1)$-dimensional faces of $\conv(\sL_A)$.
\end{lemma}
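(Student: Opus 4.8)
The plan is to establish a bijection between the $K$-facets of $Q$ and the $(d-1)$-dimensional faces of $\conv(\sL_A)$, using the duality machinery already set up in Lemma~\ref{lemma:gen-pos}. The key observation is that $K$-facets of $Q$ correspond to supporting $K$-spheres $x+\partial K$ for which $Q\cap(x+\partial K)$ has positive $(d-1)$-dimensional Hausdorff measure, while $(d-1)$-dimensional faces of $\conv(\sL_A)=(K\ominus A)^o$ correspond, via conjugation, to boundary points $-x\in\partial(K\ominus A)$ whose normal cone $N(K\ominus A,-x)$ is $d$-dimensional. The identity \eqref{eq:NF} tells us that this normal cone is $\conv(\bigcup_{y\in\mathcal{I}_A(x)}N(K+x,y))$, so the bridge between the two pictures is already in place.

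\medskip
\noindent\textbf{Step 1.} First I would translate the notion of $K$-facet into the language of Minkowski differences. By Proposition~\ref{prop:minus}, $K\ominus Q=K\ominus A$, and the supporting $K$-spheres $x+\partial K$ of $Q$ are in one-to-one correspondence with boundary points $-x\in\partial(K\ominus A)$, exactly as recorded in the proof of Lemma~\ref{lemma:gen-pos}. I would then argue that, since $K$ is a generating set, $Q$ is a summand of $K$, and the exposed $K$-face $Q\cap(x+\partial K)$ is a translate of the support set $F(K,u)$ of $K$ in the direction $u$ normal to $K$ at the contact point; concretely, $Q\cap(x+\partial K)$ is the translate of the face of $K$ exposed by the outer normals making up the normal cone $N(K\ominus A,-x)$.

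\medskip
\noindent\textbf{Step 2.} Next I would characterise when this exposed $K$-face has positive $\mathcal{H}_{d-1}$-measure. Because $K$ is regular and strictly convex, its boundary is $C^1$ and contains no segment, so the support set $F(K,u)$ is a single point for each $u$, and a positive-measure piece of $\partial K$ arises precisely when the set of directions $u$ sweeping it is $(d-1)$-dimensional, i.e.\ when the normal cone is $d$-dimensional. Thus $Q\cap(x+\partial K)$ is a $K$-facet if and only if $\dim N(K\ominus A,-x)=d$. Here the generating-set hypothesis is what guarantees that a $d$-dimensional normal cone genuinely produces a piece of positive surface area measure $S_{d-1}(K,\cdot)$ on the relevant spherical region, rather than a lower-dimensional set; this is where I expect to invoke the surface area measure description of $K$-facets following \cite{fod19}.

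\medskip
\noindent\textbf{Step 3.} Finally I would match these up with the combinatorics of $\conv(\sL_A)$. By the argument already given in the proof of Lemma~\ref{lemma:gen-pos}, the $(d-1)$-dimensional exposed faces of $(K\ominus A)^o=\conv(\sL_A)$ are in bijection with the singleton boundary points $-x\in\partial(K\ominus A)$ whose normal cone is $d$-dimensional, via conjugation and \eqref{eq:6}. Since $\sL_A$ is in general position and all sets in $\sL_A$ are strictly convex (Lemma~\ref{lemma:str-convex} applied to $K^o$), every $(d-1)$-dimensional face of $\conv(\sL_A)$ is exposed up to the usual closure considerations, and the count of such faces is precisely $\fv_{d-1}(\sL_A)$. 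Combining Steps~2 and~3 gives the desired equality between the number of $K$-facets of $Q$ and the number of $(d-1)$-dimensional faces of $\conv(\sL_A)$.

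\medskip
The main obstacle, I expect, is Step~2: rigorously linking the \emph{dimension} of the normal cone $N(K\ominus A,-x)$ to the \emph{positivity of the Hausdorff measure} of the exposed $K$-face. One must rule out the degenerate possibility that a $d$-dimensional normal cone corresponds to a contact set of vanishing surface area (which is exactly the phenomenon in Example~\ref{ex:arc}, where the cones are lower-dimensional and the arc has zero measure), and conversely that a lower-dimensional cone never yields positive measure. The generating-set assumption, together with regularity, should close this gap by forcing $Q$ to be a summand and hence making the exposed $K$-face a genuine translate of a positive-measure face of $K$; making this translation between cone dimension and surface measure airtight is the crux of the proof.
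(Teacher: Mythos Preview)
Your overall architecture matches the paper's: both reduce the problem to showing that, for $R:=-(K\ominus A)$, the points $x\in\partial R$ with $\dim N(R,x)=d$ are in bijection with the $K$-facets of $Q$, and then invoke the conjugate-face correspondence (as in Lemma~\ref{lemma:gen-pos}) to match these with the $(d-1)$-dimensional faces of $(K\ominus A)^o=\conv(\sL_A)$. Two points, however, need attention.

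\textbf{First, a confusion in Step~3.} The lemma asserts equality between the number of $K$-facets and the number of $(d-1)$-dimensional faces of the \emph{convex body} $\conv(\sL_A)$ in the ordinary sense. It says nothing about $\fv_{d-1}(\sL_A)$, which counts $d$-tuples in $\sM(\sL_A)$ and is in general a different (smaller) number; see Example~\ref{ex:two-points-2}. The equality $\fv_{d-1}(\sL_A)=\#\{(d-1)\text{-faces of }\conv(\sL_A)\}$ is the separate content of Lemma~\ref{lemma:fd} and requires an extra hypothesis. So delete the clause ``and the count of such faces is precisely $\fv_{d-1}(\sL_A)$''; all you need here is the bijection between $(d-1)$-faces of $\conv(\sL_A)$ and points $-x\in\partial(K\ominus A)$ with full-dimensional normal cone, which follows from \eqref{eq:6} and the strict convexity of $K\ominus A$.

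\textbf{Second, Step~2 is the whole proof and you have not supplied it.} You correctly flag as the crux the equivalence
\[
\dim N(K\ominus A,-x)=d \quad\Longleftrightarrow\quad \mathcal{H}_{d-1}\big(Q\cap(x+\partial K)\big)>0,
\]
but ``$Q$ is a summand of $K$, so the exposed $K$-face is a translate of a support set of $K$'' is not a proof (and is not literally true: the $K$-facet is a reverse spherical image $\tau(K+x,\cdot)$ over a cone of directions, not a single support set). What must actually be shown is that, for $x\in\partial R$,
\[
Q\cap(x+\partial K)=\big\{F(K+x,-u):\,u\in N(R,x)\setminus\{0\}\big\},
\]
after which positivity of $\mathcal{H}_{d-1}$ follows from $S_{d-1}(K,\cdot)$ and regularity. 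The paper establishes this by proving $u\in N(R,x)\Leftrightarrow F(K+x,-u)\in\partial Q$; the delicate direction is that $u\in N(R,x)$ forces $F(K+x,-u)\in Q$. Here the generating-set hypothesis enters concretely: assuming $F(K+x,-u)\notin K+y$ for some $y\in R$, one sets $L:=(K+x)\cap(K+y)$, writes $K+x=L+W$ (since $K$ is generating), and uses $h(L,-u)<h(K+x,-u)$ together with $w:=F(W,-u)$ to produce a translate $K+x-w$ with $\langle -w,u\rangle>0$ that still contains $A$, contradicting $x\in F(R,u)$. Your alternative route via the global decomposition $K=Q+W$ can also be made to work, but you must then argue that $y\in Q\cap\partial(K+x)$ forces $h(W+x,u)=0$ for the relevant $u$ (using $0\in W+x$ and $\langle y,u\rangle\le h(Q,u)$); as written your Step~2 contains neither argument.
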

\begin{proof}
  Denote $R:=-(K\ominus A)=-(K\ominus Q)$, where the second equality
  follows from Proposition~\ref{prop:minus}. Since $A$ is a subset of
  the interior of a $K$, the set $R$ contains the origin in its
  interior and is also strictly convex by
  Lemma~\ref{lemma:str-convex}. In view of \eqref{eq:6}, each
  $(d-1)$-dimensional face $F$ of $-\conv(\sL_A)=R^o$ corresponds to a
  singleton $\{x\}$ (being the conjugate face to $F$) on the boundary
  of $R$ with the normal cone $N(R,x)$ having a non-empty
  interior. According to \cite[Theorem~2.1.4]{schn2}, the second conjugate
  face of $F$ is the smallest exposed face of $R$ containing $F$, and
  so the correspondence between $(d-1)$-dimensional faces of $R^o$ and
  points $x\in\partial R$ with normal cone $N(R,x)$ having a nonempty
  interior is one-to-one.  It remains to show that such points $x$ are
  in one-to-one correspondence with the $K$-facets of $Q$.  
  
  For $U\subset\Sphere$, the \emph{reverse spherical image} of $U$ is
  defined by
  \begin{equation}
    \label{eq:tau_def}
    \tau(K,U):=\bigcup_{v\in U} F(K,v),
  \end{equation}
  see \cite[p.~88]{schn2}.  Plugging here $U:=-(N(R,x)\cap \Sphere)$
  and calculating the $(d-1)$-dimensional Hausdorff measure, we obtain
  \begin{multline*}
    \mathcal{H}_{d-1}\Big(\big\{F(K+x,-u): u\in N(R,x) \big\}\Big)
    =\mathcal{H}_{d-1}\Big(\tau \big(K+x,-(N(R,x)\cap \Sphere) \big)\Big)\\
    =  S_{d-1}\Big(K+x,-\big (N(R,x)\cap \Sphere \big)\Big),
  \end{multline*}
  where the second equality follows from \cite[Eq.~(4.36)]{schn2}.
  Since $K+x$ is regular, the right-hand side is positive if and only
  if $N(R,x)\cap \Sphere$ has positive $(d-1)$-dimensional Hausdorff
  measure, which, in turn, is equivalent to the fact that $N(R,x)$ has 
  nonempty interior.  Thus, it remains to prove that
  \begin{displaymath}
    u\in N(R,x)\;\text{ if and only if }
    \;F(K+x,-u)\in\partial Q\;\text{ and }\;x\in R.
  \end{displaymath}
  Note that $u\in N(R,x)$ if and only if $x\in F(R,u)$, meaning that
  $x+v\notin R$ for all $v$ such that $\langle v,u\rangle
  >0$. Equivalently,
  \begin{equation}
    \label{eq:lemma:facets_proof2}
    u\in N(R,x)\;\text{ if and only if }\;K+x\supseteq A\text{ and }
    K+x+v\not\supseteq A\text{ for all }v\text{ such that }\langle v,u\rangle >0.
  \end{equation}

  We first show that $x\in F(R,u)$ implies $F(K+x,-u)\in \partial Q$.
  Since $F(K+x,-u)$ belongs to the boundary of $K+x$, this support
  point does not belong to the interior of $Q\subseteq K+x$.  Assume
  that $F(K+x,-u)\notin Q$ and so $F(K+x,-u)\notin K+y$ for some
  $y\in R$, $y\neq x$. Hence, 
  \begin{displaymath}
    A\subset (K+x)\cap (K+y):=L.
  \end{displaymath}
  From the strict convexity of $K$, we conclude $h(L,-u)<h(K+x,-u)$.
  Since $x\in H(R,u)$ and $y\in R$, we have that
  $\langle y-x,u\rangle \leq 0$.

  Since $K$ is a generating set, $K+x=L+W$ for a convex body
  $W$. Hence,
  \begin{displaymath}
    h(K+x,-u)=h(L,-u)+h(W,-u),
  \end{displaymath}
  and the support point $F(K+x,-u)$ is the sum of $F(L,-u)$ and
  $w:=F(W,-u)$. Thus,
  \begin{displaymath}
    h(K+x,-u)=h(L,-u)+\langle w,-u\rangle. 
  \end{displaymath}
  Since $h(L,-u)<h(K+x,-u)$, we must have $\langle
  w,-u\rangle>0$. Thus, $K+x\supset L+w$, so that $L\subset K+x+(-w)$
  with $\langle -w,u\rangle>0$. Therefore, $A\subset K+x-w$. This is a
  contradiction to \eqref{eq:lemma:facets_proof2}, since
  $\langle -w,u\rangle>0$. Thus, $F(K+x,-u)\in \partial Q$.

  In the other direction, assume that $y:=F(K+x,-u)\in\partial Q$ and
  $x\in R$. If $x+v\in R$ for some $v$ with $\langle v,u\rangle>0$,
  then $y\in Q\subseteq K+x+v$. Then
  \begin{displaymath}
    \langle y,-u\rangle\leq h(K+x+v,-u)<h(K+x,-u),
  \end{displaymath}
  contrary to the fact that $\langle y,-u\rangle=h(K+x,-u)$. Thus,
  $u\in N(R,x)$ in view of \eqref{eq:lemma:facets_proof2}.
\end{proof}

The next result establishes the equality between the number of
$(d-1)$-dimensional faces of $\conv(\sL_A)$ and
$\fv_{d-1}(\sL_A)$. Note that such a relationship is not feasible for
lower-dimensional faces. For example, $\sL_A$ might have a single
$0$-dimensional face generated by some $L\in\sL_A$, while
$\conv(\sL_A)$ has infinitely many 0-dimensional faces corresponding
to singletons on the boundary of $L$.

\begin{lemma}
  \label{lemma:fd}
  Assume that $K$ is strictly convex and regular, and let
  $A\subset \Int K$ be a finite set such that the family $\sL_A$ is in
  general position. Suppose further that for each set
  $\{x_1,\dots,x_d\}\subset A$ of cardinality $d$ which belongs to a
  $K$-facet of $Q:=\bhoper{K}{A}$, this set of points does not belong to
  any other $K$-facet of $Q$. Then $\fv_{d-1}(\sL_A)$ is equal to the
  number of $(d-1)$-dimensional faces of $\conv(\sL_A)$.
\end{lemma}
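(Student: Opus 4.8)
The plan is to reduce the claimed equality to the injectivity of the map
$$F \longmapsto \sM(\sL_A, F)$$
defined on the set $\sF_{d-1}(\conv(\sL_A))$ of $(d-1)$-dimensional faces of $\conv(\sL_A)$. Indeed, by Definition~\ref{def:facial_structure} the quantity $\fv_{d-1}(\sL_A)$ is the number of \emph{distinct} $d$-tuples of the form $\sM(\sL_A, F)$, whereas the number of $(d-1)$-dimensional faces of $\conv(\sL_A)$ is the number of faces $F$ themselves; since $F\mapsto \sM(\sL_A,F)$ is by construction a surjection onto $M_{d-1}(\sL_A)$, the two counts coincide precisely when this map is injective. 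By Proposition~\ref{prop:def_gp_strictly_convex}(ii) each such $F$ is met by exactly $d$ of the strictly convex sets $(K-y)^o$, so $\sM(\sL_A,F)$ is genuinely a $d$-element set. I would also recall from the proof of Lemma~\ref{lemma:facets} the one-to-one correspondence between the $(d-1)$-dimensional faces of $\conv(\sL_A)$ and the $K$-facets of $Q$: a face $F$, being $(d-1)$-dimensional, is exposed (its normal cone at a relative interior point is a ray), so $F=F(\conv(\sL_A),-x)$ for a direction $-x$ unique up to scaling, and one checks that $-x\in\partial(K\ominus A)$ with $N(K\ominus A,-x)$ full-dimensional, the associated $K$-facet being $Q\cap(x+\partial K)$.

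The heart of the argument is to identify, for such a face $F=F(\conv(\sL_A),-x)$, exactly which sets of $\sL_A$ meet it, namely
$$\sM(\sL_A,F)=\bigl\{(K-y)^o: y\in \mathcal{I}_A(x)\bigr\},\qquad \mathcal{I}_A(x)=(\partial K+x)\cap A.$$
To prove this I would use that the support function of a convex hull of a union is the pointwise maximum, so that $(K-y)^o$ meets the exposed face $F$ if and only if $h((K-y)^o,-x)=h(\conv(\sL_A),-x)$. Since $-x\in\partial(K\ominus A)=\partial(\conv(\sL_A)^o)$, the right-hand side equals $1$. On the other hand, $h((K-y)^o,-x)$ is the value at $-x$ of the Minkowski gauge of $K-y$, equal to $\inf\{t>0: -x\in t(K-y)\}$; because $A\subset x+K$ (as $-x\in K\ominus A$), this gauge is at most $1$ for every $y\in A$, and it equals $1$ exactly when $-x\in\partial(K-y)$, i.e.\ when $y\in\partial K+x$. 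Thus the sets of $\sL_A$ meeting $F$ are indexed precisely by the sample points lying on the supporting $K$-sphere $x+\partial K$, and, since exactly $d$ sets of $\sL_A$ meet the $(d-1)$-dimensional face $F$ (Proposition~\ref{prop:def_gp_strictly_convex}(ii)), there are exactly $d$ such points, i.e.\ $\card\mathcal{I}_A(x)=d$. Note also that these $d$ points belong to $A\subset Q$, hence lie in the $K$-facet $Q\cap(x+\partial K)$ itself.

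With this identification the injectivity is immediate from the extra hypothesis. Suppose $F_1\neq F_2$ are $(d-1)$-dimensional faces with $\sM(\sL_A,F_1)=\sM(\sL_A,F_2)$, and let $x_1\neq x_2$ be the corresponding points (distinct because $F\mapsto x$ is one-to-one). Since $y\mapsto (K-y)^o$ is injective, the computation above gives $\mathcal{I}_A(x_1)=\mathcal{I}_A(x_2)=:\{y_1,\dots,y_d\}$, so this single $d$-element subset of $A$ belongs to the two \emph{distinct} $K$-facets $Q\cap(x_1+\partial K)$ and $Q\cap(x_2+\partial K)$ of $Q$. This contradicts the assumption that no $d$-element subset of $A$ lying on a $K$-facet lies on another one; hence the map is injective and the two counts agree. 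I expect the main obstacle to be the middle step, namely pinning down $\sM(\sL_A,F)$ in terms of $\mathcal{I}_A(x)$ while keeping the reflection and polarity sign conventions of Lemma~\ref{lemma:facets} consistent; once this dictionary is in place the conclusion follows purely from the hypothesis.
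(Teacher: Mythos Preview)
Your proposal is correct and follows essentially the same route as the paper: both arguments show that the map $F\mapsto\sM(\sL_A,F)$ from $(d-1)$-dimensional faces of $\conv(\sL_A)$ to $d$-tuples is injective by passing to the conjugate singleton on $\partial(K\ominus A)$ and then invoking the uniqueness hypothesis on $K$-facets. Your Minkowski-gauge computation identifying $\sM(\sL_A,F)$ with $\{(K-y)^o:y\in\mathcal{I}_A(x)\}$ makes explicit what the paper leaves implicit in the phrase ``we adapt the notation used in Lemma~\ref{lemma:facets}''; one small caution is that the full bijection of Lemma~\ref{lemma:facets} (faces $\leftrightarrow$ $K$-facets) uses the generating-set assumption, which Lemma~\ref{lemma:fd} does not impose, so you should rely only on the conjugate-face correspondence itself (as the paper's proof does) rather than on the full statement of Lemma~\ref{lemma:facets}.
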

\begin{proof}
  We adapt the notation used in Lemma~\ref{lemma:facets}. By
  definition, each $(d-1)$-dimensional face of the family $\sL_A$, say, arising
  from the $d$-tuple $\big\{(K-x_1)^o,\dots,(K-x_d)^o \big\}$, corresponds to
  at least one face $F$ of $\conv(\sL_A)$. Note that
  $\{x_1,\dots,x_d\}$ is a subset of a $K$-facet of $Q$.

  Assume that there exists another face $F'$ of $\conv(\sL_A)$, which
  is also hit by each of $(K-x_1)^o,\dots,(K-x_d)^o$. The conjugate
  faces to $F$ and $F'$ are distinct singletons $\{x\}$ and $\{x'\}$
  such that $x,x'\in \partial(K\ominus A)$ and $\{x,x'\}\subset\partial K-x_i$,
  $i=1,\dots,d$. Thus, there are two translates $K-x$ and $K-x'$ which
  contain $\{x_1,\dots,x_d\}$ on the boundary, and so these points
  belong to different exposed $K$-facets of $Q$. This contradicts the
  assumption unless $F'=F$.
\end{proof}

\begin{example}
  \label{ex:two-points-2}
  Let $A:=\{x,y\}$ be a subset of the interior of the unit disk $K$ in
  $\R^2$. Then the $K$-hull $Q$ of $\{x,y\}$ is the intersection of
  two discs having $x$ and $y$ on the boundary. In this case
  $\Fv(\sL_A)=(2,1)$, while $Q$ has two $K$-facets. 
\end{example}

\section{\texorpdfstring{$K$}{K}-strongly convex sets generated by random samples}
\label{sec:k-convex-sets}

Fix a set $K\in\sK^d_{(0)}$.  Recall that
$\Xi_n:=\{\xi_1,\dots,\xi_n\}$ denotes a set of $n$ independent
points uniformly distributed in $K$. Motivated by the construction of
disk and ball polyhedra in \cite{fod19,fod:kev:vig14}, let
\begin{equation}
  \label{eq:3}
  Q_n:=\bigcap_{x\in\R^d:\,\Xi_n\subset K+x} (K+x)
\end{equation}
be the intersection of all translates of $K$ which contain
$\Xi_n$, that is, $Q_n=\bhoper{K}{\Xi_n}$. Note that it is possible to
replace $\Xi_n$ in \eqref{eq:3} by its (conventional) convex hull
$\conv(\Xi_n)$, so that $Q_n$ is the $K$-hull of $\Xi_n$ and also of
the polytope $P_n:=\conv(\Xi_n)$.

Further, let 
\begin{displaymath}
  X_n:=K\ominus\Xi_n=-\cn_K(\Xi_n). 
\end{displaymath}
By Proposition~\ref{prop:minus}, $X_n$ is $K$-strongly convex and by
formula \eqref{eq:1}
\begin{displaymath}
  X_n=K\ominus Q_n=\bigcap_{i=1}^{n}(K-\xi_i).
\end{displaymath}
Note that the interior of $X_n$ almost surely contains the origin, and
\eqref{eq:1} yields 
\begin{equation}
  \label{eq:5}
  X_n^o=\conv\left( \bigcup_{i=1}^n (K-\xi_i)^o\right). 
\end{equation}
This immediately implies that both $X_n$ and $X_n^o$ are random convex
sets, which are almost surely compact and have nonempty interiors, also
called \emph{random convex bodies}, see
\cite[Section~1.7]{mo1}. Furthermore, the interiors of both $X_n$ and
$X_n^o$ almost surely contain the origin, that is, $X_n$ and $X_n^o$
almost surely belong to $\sK^d_{(0)}$.
Simulations of $Q_n$ and $X_n$ for $d=2$, $K$ being a unit disk, and
$n=10,40,100$ are given on Figure~\ref{figure}.

\begin{figure}
\includegraphics[scale=0.44]{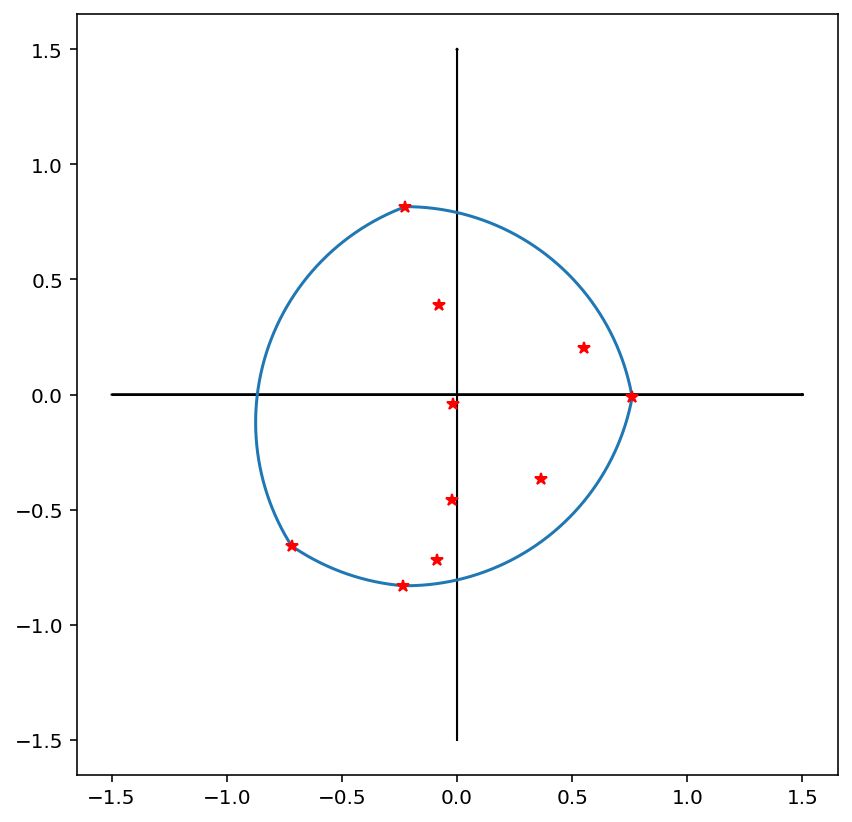}\quad\includegraphics[scale=0.44]{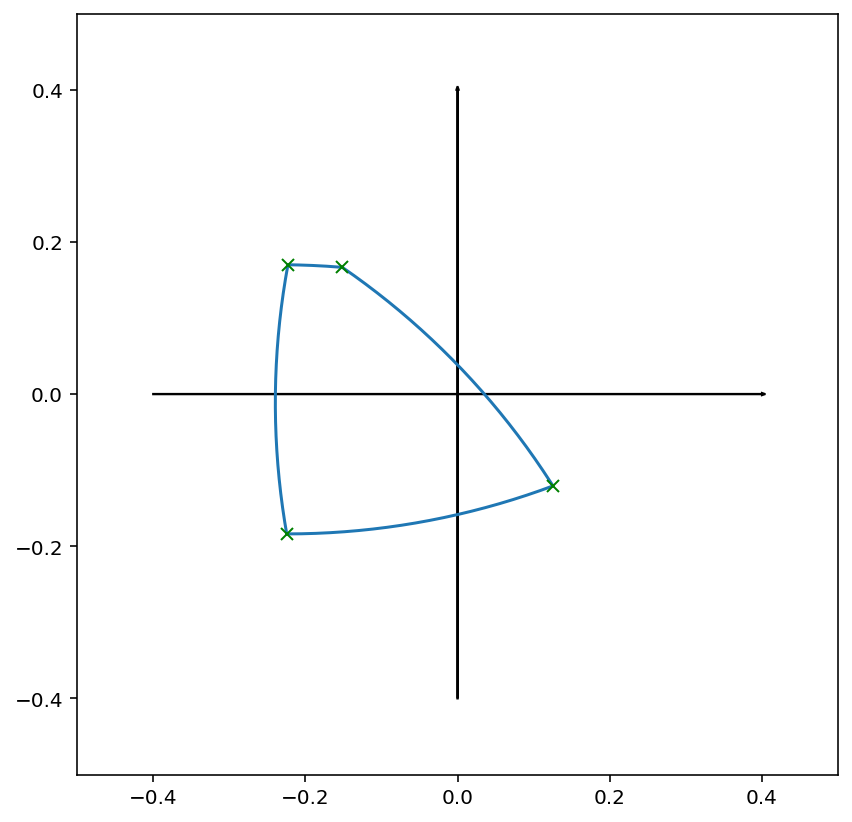}\\
\includegraphics[scale=0.44]{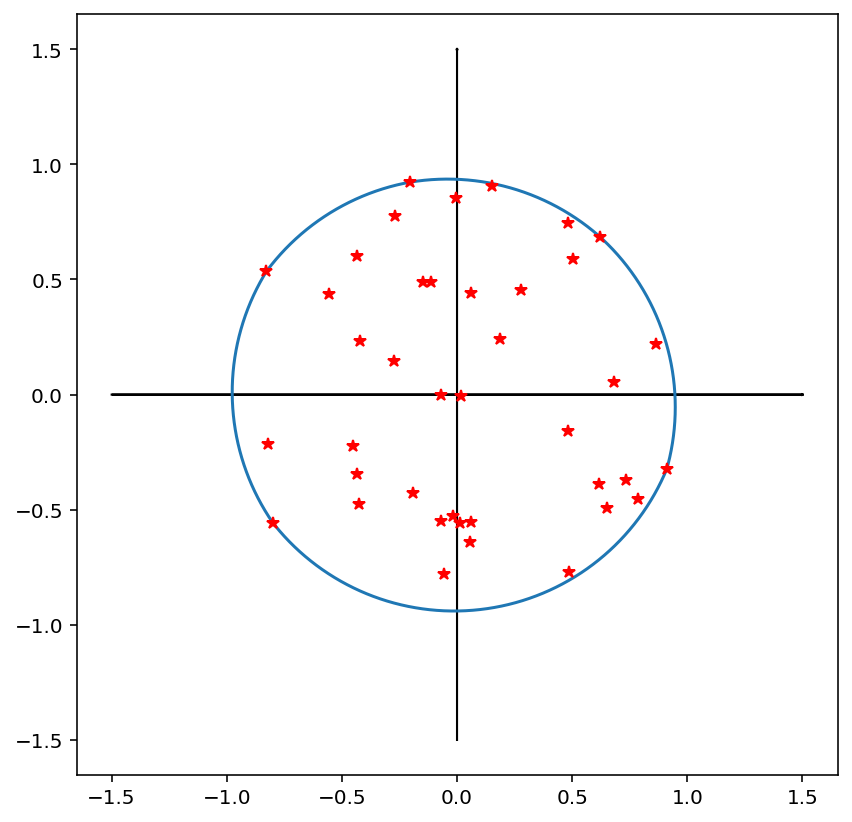}\quad\includegraphics[scale=0.44]{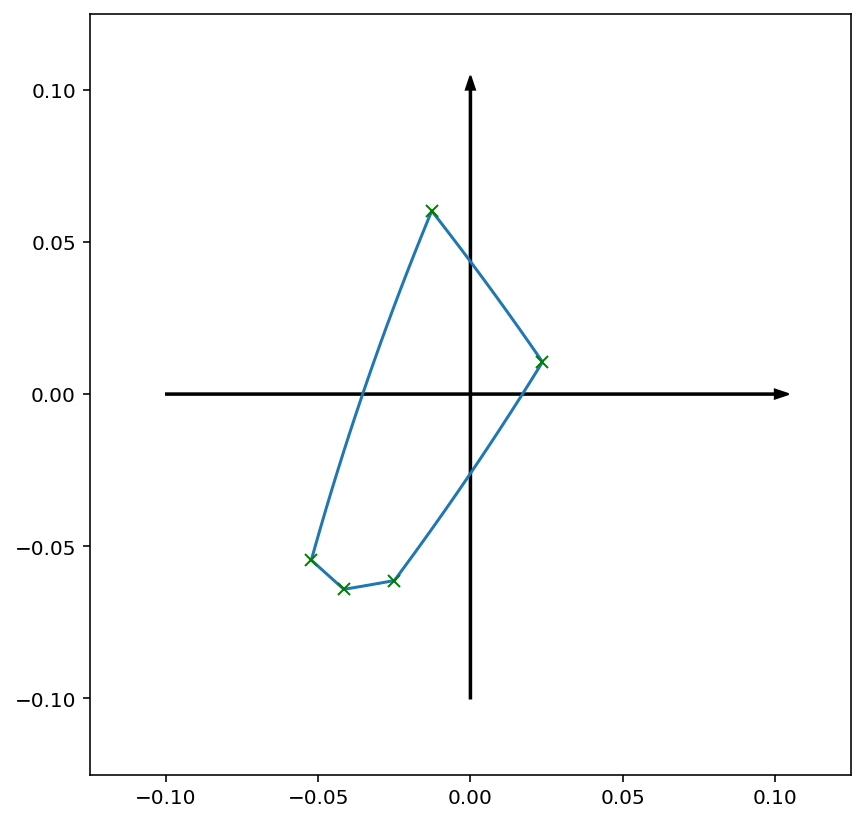}\\
\includegraphics[scale=0.44]{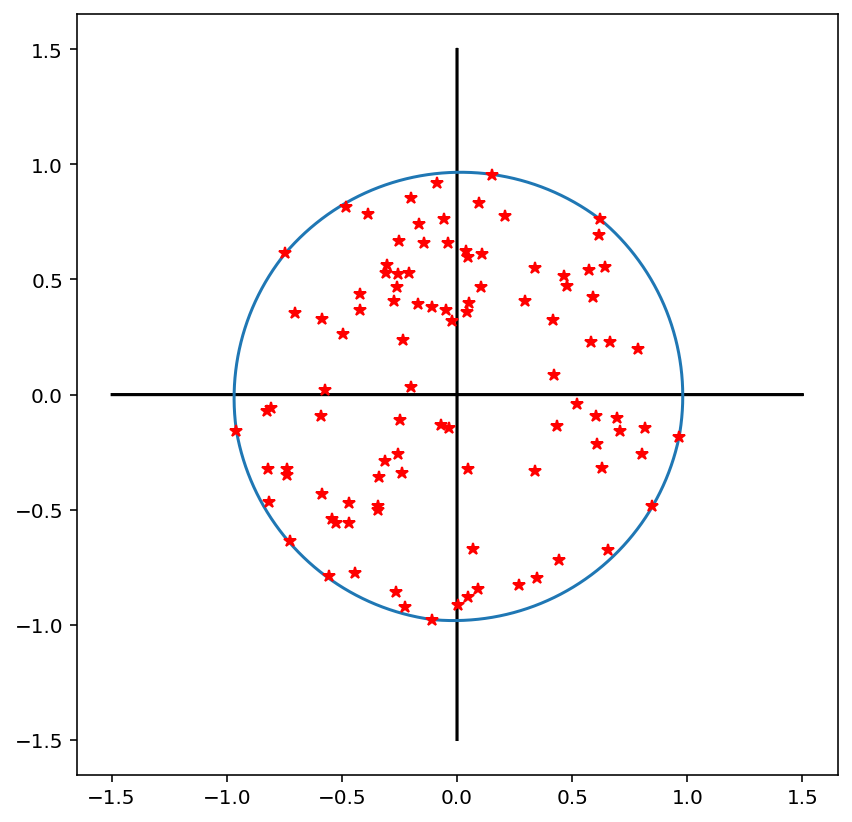}\quad\includegraphics[scale=0.44]{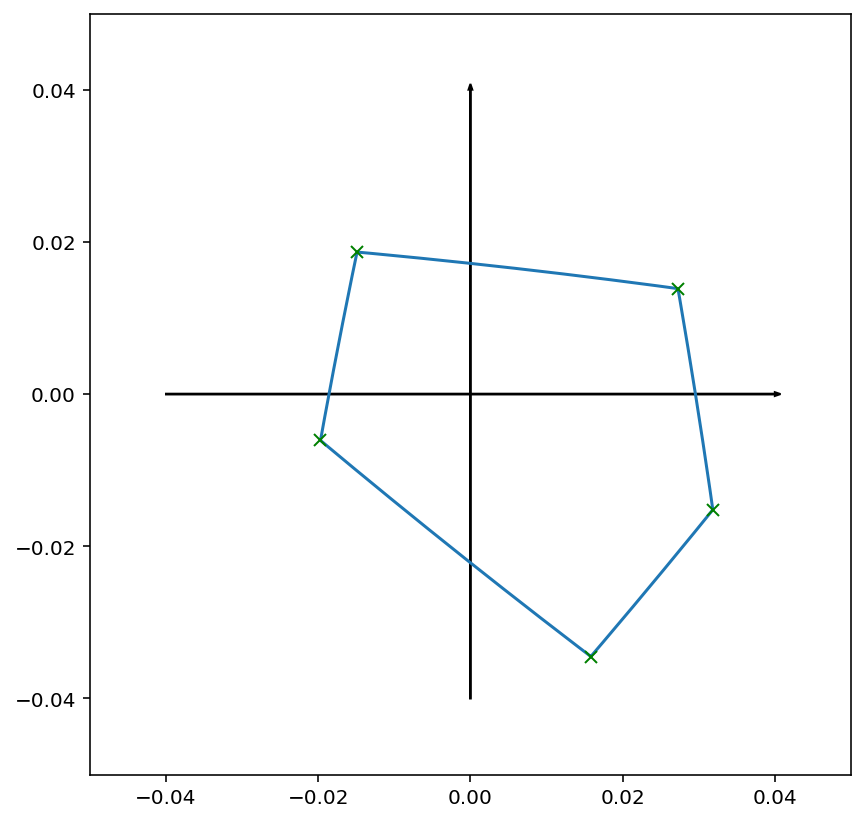}\\
\caption{$Q_{10}$ (top left) and $X_{10}$ (top right),  $Q_{10}$ is an intersection of four unit disks; $Q_{40}$ (middle left) and $X_{40}$ (middle right),  $Q_{40}$ is an intersection of five unit disks; $Q_{100}$ (bottom left) and $X_{100}$ (bottom right), $Q_{100}$ is an intersection of five unit disks.  Note a different scaling on the plots for $X_n$: in all three cases the lines connecting the 'vertices' are actually arcs of the unit circle,  that are rectified in the limit as suggested by Theorem~\ref{thr:chull}.}
\label{figure}
\end{figure}

\subsection{Convergence of the scaled Minkowski difference \texorpdfstring{$K\ominus \Xi_n$}{K-Xi}}\label{subsec:X_n_conv}

Theorem~\ref{thr:chull} below establishes the convergence in
distribution of $n^{-1}X_n^o$ and $nX_n$ as random convex sets, that
is, the weak convergence of the corresponding probability measures on
$\sK^d$ equipped with the Hausdorff metric, see
\cite[Section~1.8.2]{mo1}.  In order to formulate the result we need some
preparations.  Recall that $V_d$ stands for the Lebesgue measure on
$\R^d$, so that $V_d(K)$ is the volume of $K$.

Half-spaces in $\R^d$ which contain the origin in their interior are denoted
$H_{u}^-(t)$ and parametrised by $(t,u)\in (0,\infty)\times \Sphere$,
where $u$ is the unit outer normal vector and $t$ is the distance from
the origin to the boundary of the half-space.  Let
$\mathcal{P}_K:=\{(t_i,u_i),i\geq1\}\label{eq:ppp_def}$ be a Poisson
process on $(0,\infty)\times\Sphere$ with intensity measure $\mu$,
being the product of the Lebesgue measure on $(0,\infty)$ times the
constant $V_d(K)^{-1}$ and the measure $S_{d-1}(K,\cdot)$ on the unit
sphere (which is the surface area measure of $K$). Then
$\{H_{u_i}^-(t_i),i\geq1\}$ is a collection of half-spaces whose
boundaries are said to form a hyperplane process, inducing a
tessellation of $\R^d$, see \cite{sch:weil08}.  The measure
$S_{d-1}(K,\cdot)$ is called the directional component of the
tessellation. If $S_{d-1}(K,\cdot)$ is an even measure (which is the
case for an origin symetric $K$), then the tessellation is
stationary. Since all these half-spaces $H_{u_i}^-(t_i)$ contain the
origin in their interiors with probability one, their intersection is
not empty and is a random set denoted by $Z$.  Since the support of
the directional component $S_{d-1}(K,\cdot)$ of $\mu$ is not contained
in any closed hemisphere, $Z$ is almost surely bounded and, thus, is a
random compact convex set in $\R^d$. By the local finiteness of $\mu$,
$Z$ is almost surely a polytope called the \emph{zero cell} of the
Poisson hyperplane tessellation.

The polar set to $Z$ is the closed convex hull of the union of the
polar sets to $H_{u_i}^-(t_i)$, which are easily seen to be segments
$[0,t_i^{-1}u_i]$ with end-points at the origin and $t_i^{-1}u_i$. By
the mapping theorem for Poisson processes, the points
$\{t_i^{-1}u_i, i\geq1\}$ constitute a Poisson point process on
$\R^d\setminus\{0\}$ denoted by $\Pi_K\label{eq:Pi_def}$. The polar
set $Z^o$ is the convex hull of the points from $\Pi_K$, and
with probability one is a polytope which contain the origin in the
interior.

\begin{theorem}
  \label{thr:chull}
  The sequence of random convex bodies $(nX_n)_{n\in\NN}$ converges in
  distribution, as $n\to\infty$, to the zero cell $Z$ of the
  Poisson hyperplane tessellation introduced above.
  Furthermore, the random convex body $n^{-1}X^o_n$ converges in
  distribution to $Z^o$ as $n\to\infty$.
\end{theorem}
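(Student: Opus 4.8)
The two assertions are polar-dual: since $(nX_n)^o=n^{-1}X_n^o$ and the polarity map $L\mapsto L^o$ is continuous on $\sK^d_{(0)}$ with respect to the Hausdorff metric (see, e.g., \cite{schn2}), and since both limits $Z$ and $Z^o$ lie in $\sK^d_{(0)}$ almost surely, the continuous mapping theorem reduces the theorem to a single convergence. The plan is therefore to prove first that $n^{-1}X_n^o\dodn Z^o$ and then to recover the primal statement from $nX_n=(n^{-1}X_n^o)^o\dodn (Z^o)^o=Z$.

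Starting from \eqref{eq:5}, I would write $n^{-1}X_n^o=\conv\big(\bigcup_{i=1}^n n^{-1}(K-\xi_i)^o\big)$, so that $n^{-1}X_n^o$ is the convex hull of the union of the binomial point process $\Phi_n:=\sum_{i=1}^n\delta_{n^{-1}(K-\xi_i)^o}$ on the space of convex bodies containing the origin. The equivalence established in the Appendix — convergence in distribution of convex hulls of unions of such binomial processes is equivalent to vague convergence in distribution of the processes themselves — then reduces the claim to showing that $\Phi_n$ converges vaguely in distribution to the Poisson process $\Phi:=\sum_{(t,u)\in\mathcal{P}_K}\delta_{[0,t^{-1}u]}$ of segments. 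Here vague convergence is taken in the topology in which the degenerate body $\{0\}$ plays the role of the point at infinity: a family of bodies is relatively compact precisely when $d_H(L,\{0\})=\max_{x\in L}|x|$ stays bounded and bounded away from $0$. The reduction is completed by noting that $\conv(\Phi)=\conv\big(\bigcup_{(t,u)\in\mathcal{P}_K}[0,t^{-1}u]\big)=Z^o$, which is exactly the description of $Z^o$ recorded above.

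The heart of the argument is the vague convergence of $\Phi_n$. Fix $\xi$ near $\partial K$, let $u$ be the outer unit normal at the nearest boundary point and $s:=h(K,u)-\langle\xi,u\rangle$ the associated gap; by strict convexity and regularity of $K$ the body $(K-\xi)^o$ is again strictly convex and regular, its farthest point from the origin lies in direction $u$ at distance $\approx s^{-1}$, while in all other directions its boundary stays at distance $O(1)$. Consequently, after scaling by $n^{-1}$ and setting $t:=ns$, the body $n^{-1}(K-\xi)^o$ has its far vertex near $t^{-1}u$ and all remaining boundary points within $O(n^{-1})$ of the origin, so it degenerates in the Hausdorff metric to the segment $[0,t^{-1}u]$. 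To identify the limiting intensity I would compute, for a test family of bodies bounded away from $\{0\}$, $\E[\Phi_n(\cdot)]=n\,\Prob{n^{-1}(K-\xi)^o\in\cdot}$; parametrising the boundary layer by $(s,u)$ and using that, for regular $K$, the boundary-layer volume element equals $\mathrm{d}s\,S_{d-1}(K,\mathrm{d}u)$ up to curvature corrections that vanish as $s\to0$, the substitution $t=ns$ turns $nV_d(K)^{-1}\,\mathrm{d}s\,S_{d-1}(K,\mathrm{d}u)$ into $V_d(K)^{-1}\,\mathrm{d}t\,S_{d-1}(K,\mathrm{d}u)$, which is precisely the intensity of $\mathcal{P}_K$. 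Since each $\xi_i$ falls into the relevant boundary layer with probability $O(n^{-1})$ and the $\xi_i$ are independent, a triangular-array Poisson approximation upgrades the convergence of intensities to vague convergence of $\Phi_n$ to the Poisson process $\Phi$; the interior points, for which $t=ns\to\infty$, map to bodies collapsing to $\{0\}$ and hence escape to infinity in the chosen topology.

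Combining the two previous steps yields $n^{-1}X_n^o\dodn Z^o$, and polarity then gives $nX_n\dodn Z$, completing the proof. The hard part will be the vague convergence of the body-valued process $\Phi_n$: one must simultaneously control the location of the far vertex and the shape of $n^{-1}(K-\xi)^o$, proving that its non-extreme part (including any thin ``spike'' in directions close to $u$) collapses to the segment uniformly enough for the marked process to converge, and one must justify the intensity computation, namely that the boundary-layer volume is governed by $S_{d-1}(K,\cdot)$ with negligible curvature corrections. Verifying that the degenerate body $\{0\}$ is correctly treated as the point at infinity — so that the bulk of the interior points is harmlessly discarded — and invoking the appropriate Poisson limit theorem in this space of bodies are the remaining technical points.
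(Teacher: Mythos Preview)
Your strategy is essentially the reverse of the paper's. The paper proves $n^{-1}X_n^o\dodn Z^o$ \emph{directly} by computing containment probabilities: using \eqref{eq:5} and polarity one has
\[
\Prob{n^{-1}X_n^o\subset L}=\big(1-\Prob{\xi_1\notin K\ominus n^{-1}L^o}\big)^n,
\]
and then a result of Kiderlen and Rataj on the volume of $K\setminus(K\ominus \eps L^o)$ identifies the limit as $\exp\big(-V_d(K)^{-1}\int_{\Sphere}h(L^o,u)\,S_{d-1}(K,{\rm d}u)\big)=\Prob{Z^o\subset L}$. The convergence of $nX_n$ then follows by continuity of the polar map, exactly as you say. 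Only \emph{afterwards} is Theorem~\ref{thr:pp} invoked, in the opposite direction, to deduce vague convergence of the body-valued process (Theorem~\ref{th:convergence-pp}). You instead propose to establish the vague convergence of $\Phi_n$ first, via a detailed shape analysis of $n^{-1}(K-\xi)^o$, and then apply Theorem~\ref{thr:pp} to recover $n^{-1}X_n^o\dodn Z^o$.

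Your route is coherent but costs more and proves less. The shape analysis you sketch (unique outer normal at the nearest boundary point, a single far vertex in direction $u$, curvature-controlled boundary layer) explicitly invokes strict convexity and regularity of $K$, whereas Theorem~\ref{thr:chull} is stated and proved for arbitrary $K\in\sK^d_{(0)}$; at a nonregular boundary point the polar $(K-\xi)^o$ is large along an entire cone of directions, so the ``collapse to a segment'' picture needs reworking. The paper's approach sidesteps all of this: no pointwise description of $n^{-1}(K-\xi)^o$ is ever needed, only the scalar asymptotic $n\,V_d\big(K\setminus(K\ominus n^{-1}L^o)\big)\to\int h(L^o,u)\,S_{d-1}(K,{\rm d}u)$, which holds for any convex body. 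In effect, the containment-probability computation \emph{is} the verification of vague convergence on the generating class $\{\sA_L^c\}$ used in the proof of Theorem~\ref{thr:pp}, so the paper's argument is the streamlined core of what you propose, with the geometric heavy lifting outsourced to Kiderlen--Rataj rather than done by hand.
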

\begin{proof}
  We start with the second statement.  Since each $\xi_i$ almost
  surely belongs to the interior of $K$, $nX_n$ is a random compact
  convex set whose interior a.s.\ contains the origin. Hence,
  $n^{-1}X^o_n$ is indeed a random convex body.  By
  \cite[Theorem~1.8.14]{mo1}, it suffices to show that
  \begin{displaymath}
    \Prob{n^{-1}X^o_n\subseteq L}\to \Prob{Z^o\subseteq L}
    \quad \text{as }\; n\to\infty
  \end{displaymath}
  for all $L\in\sK^d$ and
  $\liminf_{n\to\infty} \Prob{n^{-1}X^o_n\subseteq L}\uparrow 1$ as
  $L\uparrow\R^d$. Since the interior of $X_n^o$ a.s.\ contains the
  origin, it suffices to assume that $L$ belongs to
  $\sK^d_{(0)}$. In view of \eqref{eq:5},
  \begin{align*}
    \Prob{n^{-1}X_n^o \subset L}
    &=\Prob{n^{-1}(K-\xi_i)^{o}\subset L\text{ for all }i=1,2,\dots,n}\\
    &=\Big(1-\Prob{n^{-1}(K-\xi_1)^{o}\not \subset L}\Big)^n\\
    &=\Big(1-\Prob{n^{-1} L^o \not \subset (K-\xi_1)}\Big)^n\\
    &=\Big(1-\Prob{\xi_1\notin K\ominus n^{-1} L^o}\Big)^n.
  \end{align*}
  By \cite[Theorem~1]{kid:rat06} applied with\footnote{Note that the
    authors of \cite{kid:rat06} use a slightly different definition of
    the Minkowski difference involving the set $L$ reflected
    with respect to the origin.}
  $C=K$, $A=K$, $P=B=W=\{0\}$, $Q=-(L^{o})$ and
  $\eps=n^{-1}$, we have
  \begin{align}
    \lim_{n\to \infty} n\Prob{\xi_1\notin K\ominus n^{-1}L^o}
    &=\lim_{n\to\infty} n \frac{V_d\big(K\setminus (K\ominus
      n^{-1}L^o)\big)}{V_d(K)}\notag \\
    &=\frac{1}{V_d(K)}\int_{\Sphere} h(L^o,u) S_{d-1}(K,{\rm d}u).
      \label{eq:kr_convergence}
  \end{align}
  Note that the set $K$ is gentle\footnote{The definition can be found
    on p.~107 in \cite{kid:rat06}.} by Proposition~1 in
  \cite{kid:rat06} because it is a convex body and is topologically
  regular, that is, coincides with the closure of its interior. Hence,
  \begin{displaymath}
    \Prob{n^{-1}X_n^o \subset L}
    \to \exp{\left(-\frac{1}{V_d(K)}\int_{\Sphere}
        h(L^{o} ,u) S_{d-1}(K,{\rm d}u)\right)}
    \quad \text{as }\; n\to\infty. 
  \end{displaymath}
  The right-hand side coincides with $\Prob{Z^{o}\subset L}$ because 
  \begin{align*}
    \Prob{Z^o\subset L}=\Prob{L^o\subset Z}
    &=\Prob{h(L^{o},u)\leq t\text{ for all }(u,t)\in\mathcal{P}_K}\\
    &=\exp\Big(-\mu\big(\{(u,t):h(L^{o},u)> t\}\big)\Big)\\
    &=\exp \bigg(-\frac{1}{V_d(K)}\int_{\Sphere} h(L^{o} ,u)
      S_{d-1}(K,{\rm d}u)\bigg).
  \end{align*}
  Since $Z^{o}$ is a.s.\ compact,
  $\liminf_{n\to\infty} \Prob{n^{-1}X^o_n\subseteq
    L}=\Prob{Z^{o}\subseteq L}\uparrow 1$ as $L\uparrow\R^d$.

  The convergence in distribution $nX_n\to Z$ follows from the
  continuous mapping theorem, since the transformation
  $A\mapsto A^o$ is Lipschitz on $\sK^d_{(0)}$, see
  \cite[Theorem~4.2]{mol15}.
\end{proof}

\begin{remark}
  In a recent preprint \cite{richey2020intersections}, a particular
  case of Theorem~\ref{thr:chull} was proved for $K$ being a unit ball
  and $\Xi_n$ replaced by a homogeneous Poisson process on $K$ with
  intensity $\lambda$ tending to infinity, see Theorem~1.2
  therein. Our Theorem~\ref{thr:chull} holds also in the Poisson setting.
\end{remark}

\subsection{Convergence of intrinsic volumes and their moments}

The intrinsic volumes $V_0,V_1,\ldots,V_{d}$ of a convex body $L$ are
defined by the Steiner formula
\begin{equation}
  \label{eq:steiner}
  V_{d}(L+rB_1(0))=\sum_{j=0}^{d}r^{d-j}\kappa_{n-j}V_j(L),\quad r\geq 0,
\end{equation}
where $\kappa_{j}=\pi^{j/2}/\Gamma(1+j/2)$ is the volume of the
$j$-dimensional unit ball, see Eq.~(4.8) in \cite{schn2}. It is well
known that all intrinsic volumes are continuous with respect to the
convergence in the Hausdorff metric. Thus, we immediately obtain from
Theorem~\ref{thr:chull} the following corollary.

\begin{corollary}
  Assume that $K\in\sK^{d}_{(0)}$. Then, for all $j=0,\ldots,d$,
  \begin{equation*}
    (V_{j}(nX_n))_{j=0,\ldots,d}=(n^jV_j(X_n))_{j=0,\ldots,d}
    \dodn (V_j(Z))_{j=0,\ldots,d} \quad\text{as }\; n\to\infty.
  \end{equation*}
\end{corollary}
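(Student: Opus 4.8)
The plan is to derive this corollary directly from Theorem~\ref{thr:chull} by invoking the continuity of intrinsic volumes, so the argument is essentially a one-line application of the continuous mapping theorem. First I would recall that Theorem~\ref{thr:chull} provides the convergence in distribution $nX_n \dodn Z$ as random convex bodies, where the convergence is understood as weak convergence of probability measures on $\sK^d$ equipped with the Hausdorff metric. The key structural fact I would use is that each intrinsic volume $V_j \colon \sK^d \to \R$ is continuous with respect to the Hausdorff metric; this is standard in convex geometry and is already invoked in the text immediately preceding the statement.

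Next I would set up the map $\Phi \colon \sK^d \to \R^{d+1}$ defined by $\Phi(L) := (V_0(L), V_1(L), \dots, V_d(L))$. Since each coordinate $V_j$ is continuous and $\R^{d+1}$ carries the product topology, the map $\Phi$ is itself continuous on $\sK^d$. Because $nX_n \dodn Z$, the continuous mapping theorem for weak convergence then yields
\begin{displaymath}
  \Phi(nX_n) \dodn \Phi(Z) \quad \text{as }\; n\to\infty,
\end{displaymath}
which is precisely the asserted joint convergence $(V_j(nX_n))_{j=0,\dots,d} \dodn (V_j(Z))_{j=0,\dots,d}$. I emphasise that the convergence holds \emph{jointly} in all coordinates, which is exactly what the continuous mapping theorem delivers when applied to the vector-valued map $\Phi$ rather than to the individual functionals separately.

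The only remaining point is the middle equality $V_j(nX_n) = n^j V_j(X_n)$, which I would justify by the degree-of-homogeneity property of intrinsic volumes: the $j$-th intrinsic volume $V_j$ is homogeneous of degree $j$, that is, $V_j(\lambda L) = \lambda^j V_j(L)$ for every $\lambda > 0$ and $L \in \sK^d$. Applying this with $\lambda = n$ gives the stated identity directly. This homogeneity is immediate from the Steiner formula \eqref{eq:steiner}, since scaling $L$ by $n$ and correspondingly rescaling the tube radius shows that $V_j$ picks up the factor $n^j$.

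I do not anticipate a genuine obstacle here, as the corollary is a formal consequence of results already established. The one item worth checking carefully is that all the objects $nX_n$ lie almost surely in $\sK^d$ (so that $\Phi$ is defined on a full-measure event), which follows from the remarks in Section~\ref{sec:k-convex-sets} guaranteeing that $X_n$, and hence $nX_n$, is almost surely a random convex body. With that measurability and almost-sure well-definedness in place, the continuous mapping theorem applies without further conditions, and the proof is complete.
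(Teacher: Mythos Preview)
Your proposal is correct and follows exactly the same approach as the paper, which simply remarks that intrinsic volumes are continuous in the Hausdorff metric and deduces the corollary ``immediately'' from Theorem~\ref{thr:chull}. Your write-up just spells out the continuous mapping argument and the homogeneity identity $V_j(nX_n)=n^jV_j(X_n)$ that the paper leaves implicit.
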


With some additional efforts we can also deduce the convergence of all
power moments.

\begin{proposition}
  Assume that $K\in\sK^{d}_{(0)}$. Then, for all
  $j=0,\ldots,d$ and $m\in\NN$,
  \begin{displaymath}
    \lim_{n\to\infty}n^{mj}\E V_j(X_n)^m = \E V_j(Z)^m.
  \end{displaymath}
\end{proposition}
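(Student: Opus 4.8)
The plan is to upgrade the convergence in distribution of $V_j(nX_n)=n^jV_j(X_n)$ to $V_j(Z)$, already established in the preceding corollary, to convergence of all power moments. The standard route is to combine the weak convergence with uniform integrability: if $n^{mj}\E V_j(X_n)^m=\E V_j(nX_n)^m$ and the family $(V_j(nX_n)^m)_{n\in\NN}$ is uniformly integrable, then $\E V_j(nX_n)^m\to\E V_j(Z)^m$. Since uniform integrability for each fixed $m$ follows from an $L^p$ bound for some $p>1$, it suffices to prove that $\sup_{n}\E V_j(nX_n)^{mp}<\infty$ for some $p>1$, or more conveniently to establish a single uniform bound $\sup_n \E V_j(nX_n)^{m'}<\infty$ for every $m'\in\NN$, which simultaneously secures uniform integrability at every level $m$.

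The first step is therefore to reduce all the intrinsic volumes to a single tractable quantity. Since $X_n\in\sK^d_{(0)}$ almost surely, monotonicity of intrinsic volumes under inclusion gives $V_j(nX_n)\le c_{d,j}\,V_1(nX_n)^j\le c_{d,j}\,(n\,\mathrm{diam}(X_n))^j$ up to dimensional constants, where $\mathrm{diam}$ is the diameter; alternatively one bounds every $V_j$ by a power of the circumradius $\rho(nX_n)$ of $nX_n$. Thus it is enough to obtain, for each fixed exponent $q\in\NN$, a uniform bound $\sup_n \E\big(n\,\rho(X_n)\big)^{q}<\infty$. This transfers the whole problem to controlling the tail of the scaled circumradius $n\rho(X_n)$.

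The key probabilistic estimate is a tail bound for $n\rho(X_n)$ uniform in $n$. Recall $X_n=\bigcap_{i=1}^n(K-\xi_i)$, so $nX_n\not\subseteq B_r(0)$ forces the existence of a direction $u\in\Sphere$ with $h(X_n,u)\ge r/n$, i.e.\ a point $z$ with $\|z\|\ge r/n$ lying in every $K-\xi_i$, equivalently $z+\xi_i\in K$ for all $i$, equivalently $\xi_i\in (K-z)$ for all $i$. The probability that all $n$ sample points avoid the slab $K\setminus(K-z)$ is $\big(V_d(K\cap(K-z))/V_d(K)\big)^n=\big(1-V_d(K\setminus(K-z))/V_d(K)\big)^n$, and for $\|z\|=r/n$ the gentleness of $K$ (established in the proof of Theorem~\ref{thr:chull}) yields $V_d(K\setminus(K-z))\ge c\|z\|$ for small $\|z\|$, giving a per-point bound of order $\exp(-c'r)$ after the factor $n$ cancels the $1/n$ scaling. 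A covering argument over $\Sphere$ (or over a net of directions, paying a polynomial-in-$r$ factor) then produces a genuine tail bound of the form $\Prob{n\rho(X_n)>r}\le C\,r^{d-1}e^{-c'r}$, uniformly in $n$ for $r$ in the relevant range, from which $\sup_n\E(n\rho(X_n))^q<\infty$ for every $q$ follows immediately by integrating the tail.

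The main obstacle is precisely this last uniform tail estimate: making the linear lower bound $V_d(K\setminus(K-z))\ge c\|z\|$ both explicit and uniform as $z\to 0$, and then discretising the supremum over directions $u\in\Sphere$ without losing the exponential decay. The smoothness and strict convexity of $K$ are not assumed in this proposition (only $K\in\sK^d_{(0)}$), so I would lean on the topological regularity and gentleness of $K$ rather than on $C^1$ boundary, using the same Kiderlen--Rataj machinery (\cite{kid:rat06}) that drives the convergence $n\Prob{\xi_1\notin K\ominus n^{-1}L^o}\to V_d(K)^{-1}\!\int_{\Sphere}h(L^o,u)\,S_{d-1}(K,\mathrm du)$ in Theorem~\ref{thr:chull}, now applied uniformly rather than for a fixed $L$. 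Once the uniform moment bound is in hand, uniform integrability and hence convergence of moments is routine.
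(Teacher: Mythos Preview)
Your approach is correct but genuinely different from the paper's. Both rest on the same geometric input, the linear lower bound $V_d(K\setminus(K-z))\ge c\|z\|$ (which the paper takes from \cite[Eq.~(10.1)]{schn2}; gentleness alone does not give it), and by compactness this bound extends to all $\|z\|\le\mathrm{diam}(K)$. The paper, however, does not pass through the circumradius. It invokes the Steiner formula to reduce everything to $\sup_n\E V_d(nX_n+B_1(0))^m<\infty$, writes this moment as an $m$-fold integral $\int_{(\R^d)^m}\prod_i\one_{\{B_1(y_i)\cap nX_n\neq\varnothing\}}\,\mathrm d(y_1,\ldots,y_m)$, observes that $B_1(y)\cap nX_n\neq\varnothing$ forces $\Xi_n\subset K^{1/n}+n^{-1}y$, and then bounds the resulting ratios $\big(V_d(K\cap\bigcap_i(K^{1/n}+y_i))/V_d(K)\big)^n$ by splitting into regimes of small, moderate and large $\|y_i\|$. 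Your circumradius reduction is more direct and avoids the iterated-integral bookkeeping; the paper's route avoids the covering argument over the sphere. One detail to tighten in your version: if $z\in X_n$ with $\|z\|>r/n$ and $z_j$ is the nearest net point to $(r/n)z/\|z\|\in X_n$, you only obtain $\Xi_n\subset K^\delta-z_j$, not $\Xi_n\subset K-z_j$; choosing the mesh $\delta\asymp r/n$ so that the Steiner correction $V_d(K^\delta)-V_d(K)\lesssim\delta$ is dominated by $V_d(K\setminus(K-z_j))\gtrsim r/n$ makes the net size $O(1)$ and delivers the uniform exponential tail $\P\{n\rho(X_n)>r\}\le Ce^{-c'r}$. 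This inflation is precisely the role played by $K^{1/n}$ in the paper's computation, so the two arguments are closer than they first appear.
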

\begin{proof}
  We need to check that the sequence $(n^{mj}V^m_j(X_n))_{n\in\NN}$ is
  uniformly integrable, for all $m\in\NN$ and $j=0,\ldots,d$. It
  suffices to show that
  \begin{equation}\label{eq:vol_mom_proof1}
    \sup_{n\in\NN}\left(n^{mj}\E V_j(X_n)^m\right)<\infty,
  \end{equation}
  for all $m\in\NN$ and $j=0,\ldots,d$. By the Steiner formula
  \eqref{eq:steiner}, relation \eqref{eq:vol_mom_proof1} holds if
  \begin{displaymath}
    \sup_{n\in\NN} \E V_d\big(nX_n+B_1(0)\big)^m<\infty
  \end{displaymath}
  for all $m\in\NN$.  First, note that $B_1(y)$ intersects $nX_n$ if
  and only if there is a point $z\in B_1(y)$ such that
  $\Xi_n\subset K-n^{-1}z$. In this case $\Xi_n$ is also a subset of
  $K+n^{-1}B_1(y)$. Hence,
  \begin{align*}
    \E V_d\big(nX_n+B_1(0)\big)^m
    &=\E \int_{(\R^{d})^m} \one_{\{B_1(y_1)\cap nX_n\neq\emptyset\}}\cdots
      \one_{\{B_1(y_m)\cap nX_n\neq\emptyset\}}\, {\rm d}(y_1,\ldots, y_m)\\
    &\leq \E \int_{(\R^{d})^m} \one_{\{\Xi_n\subset K+n^{-1}B_1(y_1)\}}\cdots
      \one_{\{\Xi_n\subset K+n^{-1}B_1(y_m)\}}\, {\rm d}(y_1,\ldots, y_m)\\
    &=\int_{(\R^{d})^m} \Prob{\Xi_n\subset (K+n^{-1}B_1(y_1))\cap\cdots\cap
      (K+n^{-1}B_1(y_m))}\, {\rm d}(y_1,\ldots, y_m)\\
    &=\int_{(\R^{d})^m} \left(\frac{V_d\Big(K\cap
      \big(K+n^{-1}B_1(y_1)\big)\cap\cdots\cap
      \big(K+n^{-1}B_1(y_m)\big)\Big)}{V_d(K)}\right)^n\,
      {\rm d}(y_1,\ldots, y_m).
  \end{align*}
  Introducing the shorthand $K^r:=K+B_r(0)$ and making a change
  variables, we obtain
  \begin{displaymath}
    \E V_d\big(nX_n+B_1(0)\big)^m
    \leq \int_{(\R^{d})^m} n^{dm}\left(\frac{V_d\big(K\cap
        (K^{1/n}+y_1)\cap\cdots\cap
        (K^{1/n}+y_m)\big)}{V_d(K)}\right)^n\, {\rm d}(y_1,\dots,y_m).
  \end{displaymath}
  For each $c>0$,
  \begin{align*}
    &\hspace{-1cm}\int_{(\R^{d})^m} n^{dm}\one_{\{\|y_1\|< c/n\}}
      \left(\frac{V_d\big(K\cap (K^{1/n}+y_1)\cap\cdots\cap
      (K^{1/n}+y_m)\big)}{V_d(K)}\right)^n {\rm d}(y_1,\dots,y_m)\\
    &\leq \int_{(\R^{d})^m} n^{dm}\one_{\{\|y_1\|< c/n\}}
      \left(\frac{V_d\big(K\cap (K^{1/n}+y_2)\cap\cdots\cap
      (K^{1/n}+y_m)\big)}{V_d(K)}\right)^n {\rm d}(y_1,\dots,y_m)\\
    &=\int_{\R^d}\one_{\{\|y_1\|< c/n\}}{\rm d}y_1\int_{(\R^{d})^{m-1}} n^{dm}
      \left(\frac{V_d\big(K\cap (K^{1/n}+y_2)\cap\cdots\cap
      (K^{1/n}+y_m)\big)}{V_d(K)}\right)^n {\rm d}(y_2,\dots,y_m)\\
    &=\kappa_d c^d \int_{(\R^{d})^{m-1}} n^{d(m-1)}
      \left(\frac{V_d\big(K\cap (K^{1/n}+y_2)\cap\cdots\cap
      (K^{1/n}+y_m)\big)}{V_d(K)}\right)^n {\rm d}(y_2,\dots,y_m),
  \end{align*}
  where $\kappa_d$ is the volume of the unit ball. Iterating this
  bound yields the estimate
  \begin{displaymath}
    \E V_d\big(nX_n+B_1(0)\big)^m \leq c_0+\sum_{j=1}^m c_jI_n(j),
  \end{displaymath}
  where $c_0,c_1,\dots,c_m$ are nonnegative constants, which do not
  depend on $n$, and, for $j\in\NN$,
  \begin{equation}\label{eq:inm}
    I_n(j):=\\
    \int_{(\R^{d})^{j}}\one_{\{\|y_i\|\geq c/n, i=1,\dots,j\}} n^{dj}
    \left(\frac{V_d\big(K\cap (K^{1/n}+y_1)\cap\cdots\cap
        (K^{1/n}+y_j)\big)}{V_d(K)}\right)^n {\rm d}(y_1,\dots,y_j).
  \end{equation}
  Thus, it suffices to show that
  \begin{displaymath}
    \sup_{n\in\NN} I_n(j)<\infty
  \end{displaymath}
  for all $j\in\NN$. Note that the volume in the numerator vanishes
  whenever $\|y_i\|\geq M$ for some $i=1,\ldots,j$ and a suitable
  finite constant $M=M(K)$ because then $K\cap (K+y_i)=\varnothing$.

  Fix an arbitrary $C>0$. It is clear that there exists a
  $\delta_{K,C}\in(0,1)$, such that, for all $y\in\R^d$
  satisfying $\|y\|\geq C$ and all sufficiently large $n\in\NN$,
  \begin{displaymath}
    V_d(K\cap (K^{1/n}+y))\leq \delta_{K,C} V_d(K).
  \end{displaymath}
  This bound yields that, for sufficiently large $n\in\NN$, the part
  of the integral in \eqref{eq:inm} taken over the set
  $\{\|y_i\|\leq M,i=1,\ldots,j\}\setminus \{\|y_i\|\leq
  C,i=1,\ldots,j\}$ is bounded by
  ${\rm const}\cdot n^{dj}\delta_{K,C}^n$, which tends to zero as
  $n\to\infty$. Thus, it suffices to check that
  \begin{equation}
    \label{eq:inmt}
    \tilde{I}_n(m):=\int_{(\R^d)^m}\one_{\{\|y_i\|\in[c/n,C], i=1,\dots,m\}} n^{dm}
    \left(\frac{V_d\big(K\cap (K^{1/n}+y_1)\cap\cdots\cap
        (K^{1/n}+y_m)\big)}{V_d(K)}\right)^n {\rm d}(y_1,\dots,y_m),
  \end{equation}
  are uniformly bounded in $n\in\NN$, for each fixed $m\in\NN$ and a
  suitable choice of $c>0$ and $C>0$.

  It follows from \cite[Eq.~(10.1)]{schn2} that there exist $C>0$ and
  $a>0$ (possibly depending on $K$) such that
  \begin{displaymath}
    V_d\big(K\setminus (K+ru)\big)\geq  ar
  \end{displaymath}
  for all $u\in\Sphere$ and $r\in[0,C]$. Furthermore, from the Steiner
  formula \eqref{eq:steiner} it follows that there exists a $b>0$ such
  that, for all $n\in\NN$,
  \begin{displaymath}
    V_d(K^{1/n})-V_d(K)\leq b/n.
  \end{displaymath}
  Fix arbitrary $c>b/a$. Combining the above estimates yields that
  \begin{displaymath}
    V_d\big(K\setminus (K^{1/n}+ru)\big)
    \geq V_d\big(K\setminus (K+ru)\big)-\big(V_d(K^{1/n})-V_d(K)\big)
    \geq ar-b/n\geq a(r-c/n). 
  \end{displaymath}
  for all $u\in\Sphere$, $r\in [c/n,\,C]$ and sufficiently large
  $n\in\NN$.  Thus,
  \begin{displaymath}
    V_d\big(K\setminus (K^{1/n}+y_i)\big)\geq a(\|y_i\|-c/n),
  \end{displaymath}
  so that
  \begin{displaymath}
    \frac{V_d\big(K\cap (K^{1/n}+y_i)\big)}{V_d(K)}
    \leq 1-V_d(K)^{-1}a(\|y_i\|-c/n)=:1-a'(\|y_i\|-c/n),
  \end{displaymath}
  for all $y_i$ with $\|y_i\|\in[c/n,C]$. Note that the constants are
  adjusted in such a way that the right-hand side is always
  nonnegative on the domain $\{\|y_i\|\in[c/n,C],i=1,\dots,m\}$. By
  passing to polar coordinates on the right-hand side of
  \eqref{eq:inmt}, we see that
  \begin{align*}
    \tilde{I}_n(m)
    &\leq \int_{(\R^d)^m} \one_{\{\|y_i\|\in[c/n,C],i=1,\dots,m\}}
      n^{dm}\Big(1-a'\big(\max(\|y_1\|,\dots,\|y_m\|)-c/n\big)\Big)^n {\rm
      d}(y_1,\dots,y_m)\\
    &={\rm const}\cdot  \int_{[0,C-c/n]^m} n^{dm}
      \big(1-a'\max(r_1,\dots,r_m)\big)^n
      \big((r_1+c/n)\cdots (r_m+c/n)\big)^{d-1}\, {\rm d}(r_1,\dots,r_m)\\
    &=  {\rm const}\cdot \int_{[0,C-c/n]^m}\big(1-a' n^{-1}
      \max(s_1,\dots,s_m)\big)^n
      \big((s_1+c)\cdots (s_m+c)\big)^{d-1}\, {\rm d}(s_1,\dots,s_m)\\
    &\leq {\rm const}\cdot\int_{(0,\infty)^m}e^{-a'\max(s_1,\dots,s_m)}
      \big((s_1+c)\cdots (s_m+c)\big)^{d-1}\, {\rm d}(s_1,\dots,s_m)\\
    &\leq {\rm const}\cdot\int_{(0,\infty)^m}e^{-a' m^{-1} (s_1+\cdots+s_m)}
      \big((s_1+c)\cdots (s_m+c)\big)^{d-1}\, {\rm d}(s_1,\dots,s_m)\\
    &= {\rm const}\cdot\left(\int_0^\infty e^{-a' m^{-1}s}
      (s+c)^{d-1} {\rm d}s\right)^m
      <\infty 
  \end{align*}
  for all $n\in\NN$. The proof is complete.
\end{proof}

\begin{remark}
  From the H\"{o}lder inequality we immediately obtain the convergence
  of all mixed moments of the vector
  $(V_0(nX_n),V_1(nX_n),\ldots,V_d(nX_n))$, as $n\to\infty$, to the
  corresponding mixed moments of the vector of intrinsic volumes of
  the zero cell $Z$.
\end{remark}

\subsection{Convergence of point processes}

The sets involved in the closed convex hull operation on the
right-hand side of \eqref{eq:5} are independent copies of the random
compact set $(K-\xi)^o$, where $\xi$ is uniformly distributed in
$K$. These $n$ independent copies form a (binomial) point process
\begin{displaymath}
  \Psi_n:=\big\{(K-\xi_1)^o,\dots,(K-\xi_n)^o\big\}
\end{displaymath}
on the space $\sK^d_0$ of compact convex sets containing the
origin. By $n^{-1}\Psi_n$ we denote the scale transformation of
$\Psi_n$, which is the point process composed of the sets
$n^{-1}(K-\xi_i)^o$, $i=1,\dots,n$.

The Borel $\sigma$-algebra on $\sK^d_0$ is induced by the Hausdorff
metric. Let $\sB_0$ be the family of Borel
$\sA\subset\sK^d_0\setminus\{\{0\}\}$ such that the closure of $\sA$ in
$\sK^d_0$ does not contain $\{0\}.$\footnote{For typographical reasons we shall write in what follows $\sK^d_0\setminus\{0\}$ instead of $\sK^d_0\setminus\{\{0\}\}$.}  A Borel measure $\mu$ on
$\sK^d_{0}\setminus\{0\}$ is said to be locally finite if
$\mu(\sA)<\infty$ for all $\sA\in\sB_0$.  A sequence
$(\mu_n)_{n\in\NN}$ of locally finite measures on
$\sK^d_0\setminus\{0\}$ is said to converge \emph{vaguely} to a
locally finite measure $\mu$ if $\mu_n(\sA)\to\mu(\sA)$ as
$n\to\infty$ for all $\sA\in\sB_0$, which are continuity sets for the
limiting measure. Equivalently,
$\int f{\rm d}\mu_n\to \int f{\rm d}\mu$ for all bounded continuous
functions $f:\sK^d_0\setminus\{0\}\to\R$ which vanish in a
neighbourhood of $\{0\}$ (in the Hausdorff metric).

The convergence in distribution of random measures on $\sK^d$ is
understood with respect to the vague topology. This convergence
concept applies to random counting measures (or point
processes). Theorem~\ref{thr:pp} in the Appendix yields the following
result.

\begin{theorem}
  \label{th:convergence-pp}
  The sequence of point processes $(n^{-1}\Psi_n)_{n\in\NN}$ converges
  in distribution to the point process $\{[0,x]: x\in \Pi_K\}$ on
  $\sK^d_0$.
\end{theorem}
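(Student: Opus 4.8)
The plan is to read off the convergence from Theorem~\ref{thr:pp}, after re-expressing the convex-hull convergence already established in Theorem~\ref{thr:chull} as a convergence of avoidance (void) probabilities of the processes $n^{-1}\Psi_n$. The candidate limit $\eta:=\{[0,x]:x\in\Pi_K\}$ is a Poisson process on $\sK^d_0$ whose atoms are the segments $[0,x]$, $x\in\Pi_K$, and $n^{-1}\Psi_n$ is a null array: for $\xi$ in a fixed compact subset of $\Int K$ the body $(K-\xi)^o$ is bounded, so $n^{-1}(K-\xi)^o$ collapses to the removed point $\{0\}$, and only the boundary layer of points $\xi$ within distance of order $1/n$ from $\partial K$ contributes atoms of $n^{-1}\Psi_n$ that remain bounded away from $\{0\}$.

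The central identity is the following. For $L\in\sK^d_{(0)}$ put $\sA_L:=\{M\in\sK^d_0: M\not\subset L\}$; since $L$ contains a ball around the origin, $\{0\}$ is not in the Hausdorff closure of $\sA_L$, so $\sA_L\in\sB_0$. Because a union of sets is contained in $L$ exactly when its convex hull is, relation \eqref{eq:5} gives that the avoidance event of $n^{-1}\Psi_n$ on $\sA_L$ satisfies
\begin{displaymath}
  \big\{n^{-1}\Psi_n(\sA_L)=0\big\}
  =\Big\{\textstyle\bigcup_{i=1}^n n^{-1}(K-\xi_i)^o\subset L\Big\}
  =\big\{n^{-1}X_n^o\subset L\big\}.
\end{displaymath}
By Theorem~\ref{thr:chull} the probability of this event converges to $\Prob{Z^o\subset L}$. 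On the limit side a segment $[0,x]$ lies in $\sA_L$ iff $x\notin L$ (using $0\in L$), so the avoidance event $\{\eta(\sA_L)=0\}$ equals $\{\Pi_K\subset L\}=\{\conv(\Pi_K)\subset L\}=\{Z^o\subset L\}$. Hence the avoidance probabilities of $n^{-1}\Psi_n$ converge to those of $\eta$ on the whole class $\{\sA_L:L\in\sK^d_{(0)}\}$.

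Equivalently, at the level of first moments this says that the intensity measures $\mu_n:=n\,\Prob{n^{-1}(K-\xi)^o\in\cdot}$ satisfy, via \eqref{eq:5} and the identity $\{n^{-1}(K-\xi)^o\not\subset L\}=\{\xi\notin K\ominus n^{-1}L^o\}$ together with \eqref{eq:kr_convergence},
\begin{displaymath}
  \mu_n(\sA_L)\longrightarrow \frac{1}{V_d(K)}\int_{\Sphere}h(L^o,u)\,S_{d-1}(K,\diff u)
  =\E\,\card\{x\in\Pi_K:x\notin L\},
\end{displaymath}
the last quantity being the $\eta$-intensity of $\sA_L$; here the middle equality follows from $h(L^o,u)=\int_0^\infty\one\{t^{-1}u\notin L\}\,\diff t$ and the definition of $\Pi_K$ as the image of $\mathcal{P}_K$ under $(t,u)\mapsto t^{-1}u$. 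Feeding either form of this convergence into Theorem~\ref{thr:pp} yields $n^{-1}\Psi_n\dodn\eta$. I expect the main obstacle to be supplied entirely by Theorem~\ref{thr:pp}: one must know that avoidance probabilities on the class $\{\sA_L\}$ (equivalently, the convergence of convex hulls $n^{-1}X_n^o\dodn Z^o$) determine the limiting Poisson process — which here works because, almost surely, the points of $\Pi_K$ are exactly the vertices of $Z^o$ — and that the null-array structure forces the limit to be Poisson. The geometric fact underpinning this is that for $\xi$ in the $1/n$-boundary layer with outer normal direction $u$ the scaled polar $n^{-1}(K-\xi)^o$ is a spike of transverse width $O(1/n)$, hence Hausdorff-converges to the segment $[0,t^{-1}u]$; controlling this collapse uniformly is the technical core of Theorem~\ref{thr:pp}.
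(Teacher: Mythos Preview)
Your proposal is correct and follows essentially the same route as the paper: the paper's proof is the single sentence that Theorem~\ref{thr:pp} (the equivalence between convergence of the binomial processes $n^{-1}\Psi_n$ and convergence in distribution of their convex hulls) combined with the already-established convergence $n^{-1}X_n^o\dodn Z^o$ from Theorem~\ref{thr:chull} yields the claim. You have simply unpacked what that application entails, writing out the avoidance identity $\{n^{-1}\Psi_n(\sA_L)=0\}=\{n^{-1}X_n^o\subset L\}$ and its limit $\{\eta(\sA_L)=0\}=\{Z^o\subset L\}$, which is exactly the mechanism inside the proof of Theorem~\ref{thr:pp}; the extra paragraph on the intensity-measure convergence and the geometric heuristic about boundary spikes is correct but not needed once Theorem~\ref{thr:pp} is invoked.
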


While the atoms of $n^{-1}\Psi_n$ belong to $\sK^d_{(0)}$, the atoms
of the limiting process do not, because they have empty
interiors. This phenomenon is due to the fact that the family
$\sK^d_{(0)}$ is not closed in $\sK^d$.

\section{Convergence of \texorpdfstring{$\Fv$}{f}-vectors of \texorpdfstring{$K$}{K}-hulls of random samples}
\label{sec:conv-texorpdfstr-vec}

Throughout this section we always assume that $K\in \sK^d_{(0)}$ is
{\it strictly convex and regular}.

\subsection{Limit theorems for the \texorpdfstring{$\Fv$}{f}-vector}\label{sec:main-results}

Lemma~\ref{lemma:gen-pos} yields that the finite family of convex bodies
$$
\sL_{\Xi_n}:=\{(K-\xi_k)^{o}:k=1,\dots,n\}
$$
is in general position with probability one.  Indeed, if we take
$x\in\R^d$ such that $\Xi_n\subseteq x+K$ and
$\partial K+x$ contains $\xi_{i_1},\ldots,\xi_{i_l}$, then, with
probability one, $l\leq d$ and the one-dimensional normal cones
$N(K+x,\xi_{i_1}),\ldots, N(K+x,\xi_{i_l})$ are linearly independent,
see Lemma~\ref{lem:gen_pos} in the Appendix.

Thus, with probability one the $\Fv$-vector of $Q_n=\bhoper{K}{\Xi_n}$ is
well defined and has all finite components.  In view of
Theorem~\ref{thr:chull}, it is natural to expect that
\begin{equation}
  \label{eq:f_vec_convergence}
  \Fv(Q_n)=\Fv(\sL_{\Xi_n})\dodn {\bf f}(Z^{o}) \quad\text{as }\; n\to\infty,
\end{equation}
and also the convergence of moments (possibly, under further
assumptions).  Recall that $Z^{o}=\conv(\Pi_K)$ is a.s.~a polytope and
${\bf f}(Z^{o})=(f_0(Z^{o}),f_1(Z^{o}),\ldots,f_{d-1}(Z^{o}))$ is the
$f$-vector of $Z^{o}$ in the usual sense.  We confirm this by proving
the following two results.

\begin{theorem}\label{thm:f_vector_convergence}
  Assume that $K\in \sK^d_{(0)}$ is {\it strictly convex and regular}.
  Then \eqref{eq:f_vec_convergence} holds.
\end{theorem}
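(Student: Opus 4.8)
The plan is to transport the convergence of point processes from Theorem~\ref{th:convergence-pp} to the $\Fv$-vectors, exploiting two facts: the $\Fv$-vector is invariant under a common rescaling of a family, and, by Example~\ref{example:poly}, the $\Fv$-vector of a family of segments emanating from the origin is precisely the ordinary $f$-vector of the convex hull of their far endpoints. Combining these observations with the identity $\conv(\{[0,x]:x\in\Pi_K\})=\conv(\Pi_K)=Z^o$ (valid since $0\in\Int Z^o$) will produce the limit ${\bf f}(Z^o)$.

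I would first reduce to the scaled family. Because $x\mapsto n^{-1}x$ is a linear bijection of $\R^d$, it carries the exposed faces of $\conv(\sL_{\Xi_n})$ onto those of $\conv(n^{-1}\sL_{\Xi_n})$ and preserves every incidence set $\sM(\sL_{\Xi_n},\cdot)$; hence general position (Definition~\ref{def:gp}) and each class $M_k$ are preserved, giving
\[
  \Fv(\sL_{\Xi_n})=\Fv\bigl(n^{-1}\sL_{\Xi_n}\bigr),\qquad n^{-1}\sL_{\Xi_n}=\bigl\{n^{-1}(K-\xi_i)^o:i=1,\dots,n\bigr\}.
\]
The right-hand family is exactly the atom set of $n^{-1}\Psi_n$, so it suffices to prove $\Fv(n^{-1}\sL_{\Xi_n})\dodn{\bf f}(Z^o)$, a statement phrased entirely through $n^{-1}\Psi_n$. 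Next I would pin down the limit. By Theorem~\ref{th:convergence-pp}, $n^{-1}\Psi_n$ converges in distribution, in the vague topology on point processes over $\sK^d_0\setminus\{0\}$, to the family of segments $\{[0,x]:x\in\Pi_K\}$. Although $\Pi_K$ is almost surely infinite (its points accumulate only at the origin), the hull $Z^o=\conv(\Pi_K)$ is almost surely a polytope whose finitely many vertices are the only points of $\Pi_K$ reaching $\partial Z^o$, while every other segment $[0,x]$, having $x\in\Int Z^o$, lies in $\Int Z^o$ and touches no proper face. I would check that, almost surely, these vertices together with the origin are in general position, which follows from the absolute continuity of the intensity of $\Pi_K$ by the standard argument. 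Restricting to the finite subfamily of extreme segments, Example~\ref{example:poly} then yields that the $\Fv$-vector of the limiting family equals ${\bf f}(\conv(\Pi_K))={\bf f}(Z^o)$.

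The final and hardest step is to promote vague convergence of the processes to convergence of the integer-valued $\Fv$-vectors. I would couple everything on one probability space via the Skorokhod representation, applied to the joint law of $\bigl(n^{-1}\Psi_n,\,\conv(n^{-1}\Psi_n)\bigr)$, which converges in distribution to $(\{[0,x]:x\in\Pi_K\},\,Z^o)$ by Theorem~\ref{th:convergence-pp} together with the continuity of the convex-hull functional underlying Theorem~\ref{thr:pp}; on this space the convergences hold almost surely. It then suffices to show that, almost surely, $\Fv(n^{-1}\sL_{\Xi_n})={\bf f}(Z^o)$ for all large $n$, since an integer-valued sequence eventually equal to its (random) limit converges to it almost surely, hence in distribution.

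The genuine obstacle is that the atoms of $n^{-1}\Psi_n$ are full-dimensional strictly convex bodies — to which general position applies through Proposition~\ref{prop:def_gp_strictly_convex} — whereas the limiting atoms are degenerate segments, so vague convergence alone does not control the combinatorial incidences recorded by the $\Fv$-vector. To break this I would argue, first, that for all large $n$ only atoms at Hausdorff distance bounded away from $\{0\}$ can meet $\partial\conv(n^{-1}\Psi_n)$, because the hull converges to the fixed body $Z^o$; along the coupling each such atom, together with its far endpoint, converges to a segment $[0,x]$ and its tip $x$, and matching these tips with the vertices of $Z^o$ via the almost sure Hausdorff convergence of the hulls accounts for the $0$-faces. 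Second, strict convexity of each scaled polar means (Proposition~\ref{prop:def_gp_strictly_convex}) that every nonempty intersection of a polar with a face of the hull is a single point, so the curved boundary of an individual polar contributes only $0$-faces and cannot manufacture higher-dimensional faces of the family, while the transversality encoded in the almost sure general position of the vertices of $Z^o$ forces each $k$-face of $Z^o$ to be approximated, for all large $n$, by exactly one stable $(k+1)$-tuple of touching polars. Making this stabilization precise — ruling out, uniformly in $k$, both the loss of limiting faces and the birth of extra ones under the degeneration of bodies to segments — is where the main work of the proof lies.
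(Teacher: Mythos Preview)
Your proposal follows the paper's own route: Skorokhod representation applied to Theorem~\ref{th:convergence-pp}, followed by a deterministic ``stabilisation'' argument showing that under almost sure vague convergence $n^{-1}\Psi_n\to\{[0,x]:x\in\Pi_K\}$ the $\Fv$-vectors eventually coincide. The paper isolates exactly the stabilisation step you describe in your final paragraph as a standalone continuity lemma (Lemma~\ref{lemma:conv-tuples}), whose hypotheses (i)--(iii) are precisely the almost sure properties of the limiting segment family you identify; the proof of that lemma --- separate $\limsup$ and $\liminf$ bounds on $\sum_{j\ge m}\fv_j$, using compactness of faces and upper semicontinuity of intersections for the upper bound, and an explicit construction of a supporting hyperplane for the lower bound --- carries out in full the ``ruling out loss and birth of faces'' that you correctly flag as the main work.
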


\begin{theorem}\label{thm:f_vector_convergence_moments}
  Assume that $K\in \sK^d_{(0)}$ is {\it strictly convex and regular}.
  Then, for every $k=0,\ldots,d-1$ and every $m\in\NN$, we have
  \begin{equation}
    \label{eq:mom_convergence}
    \lim_{n\to\infty}\E \fv_k^m(Q_n)=\E f_k^m(Z^{o})<\infty.
  \end{equation}
\end{theorem}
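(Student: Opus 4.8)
The plan is to combine the convergence in distribution already obtained in Theorem~\ref{thm:f_vector_convergence}, namely $\Fv(Q_n)\dodn \mathbf f(Z^o)$ and hence $\fv_k(Q_n)\dodn f_k(Z^o)$ for each $k$, with a uniform integrability argument. Recall that if $Y_n\dodn Y$ and the family $(Y_n^{m'})_{n}$ is uniformly integrable for some $m'>m$, then $\E Y_n^m\to\E Y^m$ and the limit is automatically finite. Thus it suffices to prove that, for every fixed $k$ and every $p\in\NN$,
\begin{equation}\label{eq:mom_target}
  \sup_{n\in\NN}\E\fv_k(Q_n)^p<\infty .
\end{equation}
By Corollary~\ref{cor:combinatorial} we have $\fv_k(Q_n)\le\binom{\fv_0(Q_n)}{k+1}\le\fv_0(Q_n)^{k+1}$ almost surely, so \eqref{eq:mom_target} for all $k$ and $p$ reduces to the single statement that all moments of the number of vertices are uniformly bounded:
\begin{equation}\label{eq:vertex_mom}
  \sup_{n\in\NN}\E\fv_0(Q_n)^p<\infty\qquad\text{for every }p\in\NN .
\end{equation}
Once \eqref{eq:vertex_mom} is available, the uniform integrability of $\fv_k(Q_n)^m$ follows from the uniform bound on $\E\fv_k(Q_n)^{m+1}$, and \eqref{eq:mom_convergence}, together with the finiteness of $\E f_k(Z^o)^m$, is a consequence.

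To prove \eqref{eq:vertex_mom} I would first record a combinatorial description of $\fv_0(Q_n)$. Arguing as in Lemma~\ref{lemma:gen-pos} and using the correspondence \eqref{eq:NF}, a set $(K-\xi_i)^o$ is a vertex of $\sL_{\Xi_n}$ (a $0$-dimensional face) precisely when there is a translate $K+x\supseteq\Xi_n$ whose bounding $K$-sphere $\partial K+x$ meets $\Xi_n$ in the single point $\xi_i$. Distinct indices give distinct vertices, hence $\fv_0(Q_n)=\sum_{i=1}^n\one_i$, where $\one_i$ is the indicator of the event that $\xi_i$ is such a vertex. Expanding the $p$-th power and grouping the multi-indices according to the number $l\le p$ of distinct entries, exchangeability gives
\begin{displaymath}
  \E\fv_0(Q_n)^p\le\sum_{l=1}^p c_{p,l}\,n^l\,\Prob{\xi_1,\dots,\xi_l\text{ are all vertices}},
\end{displaymath}
with combinatorial constants $c_{p,l}$ depending only on $p$. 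Thus \eqref{eq:vertex_mom} follows once I show that, for each fixed $l$,
\begin{equation}\label{eq:joint_vertex}
  \sup_{n\in\NN}n^l\,\Prob{\xi_1,\dots,\xi_l\text{ are all vertices}}<\infty .
\end{equation}

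The geometric heart of the matter is a cap estimate. If $\xi_i$ is a vertex, then there is an $x$ with $\xi_i\in\partial K+x$ and $\Xi_n\subseteq K+x$; dropping the uniqueness of the contact point only enlarges the event. Writing $\delta(z):=\operatorname{dist}(z,\partial K)$ for $z\in K$, the condition $\xi_i\in\partial(K+x)$ forces $\|x\|\ge\delta(\xi_i)$, while the cap $K\setminus(K+x)$ must be free of the remaining points. By the volume estimate $V_d(K\setminus(K+x))\ge a\|x\|\ge a\,\delta(\xi_i)$ used in the proof of the preceding proposition (which rests on \cite[Eq.~(10.1)]{schn2}, and is uniform in the direction $x/\|x\|$), large translates contribute only an exponentially small probability, and discretising the admissible directions over a fixed finite net of $\Sphere$ together with a union bound over the net yields, after conditioning on $\xi_1=z_1,\dots,\xi_l=z_l$,
\begin{displaymath}
  \Prob{\xi_1,\dots,\xi_l\text{ vertices}\mid \xi_1=z_1,\dots,\xi_l=z_l}
  \le C\,\exp\!\Big(-c\,n\,V_d\big(\textstyle\bigcup_{i=1}^l C_i\big)/V_d(K)\Big),
\end{displaymath}
where $C_i$ is a cap of volume at least $a\,\delta(z_i)$ associated with $z_i$. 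When the tangential projections of $z_1,\dots,z_l$ onto $\partial K$ are pairwise separated the caps $C_i$ are essentially disjoint, so $V_d(\bigcup_i C_i)\gtrsim\sum_i\delta(z_i)$, the conditional probability factorises, and
\begin{displaymath}
  n^l\,\Prob{\xi_1,\dots,\xi_l\text{ vertices}}
  \lesssim\Big(\frac{n}{V_d(K)}\int_K e^{-c\,n\,\delta(z)}\,{\rm d}z\Big)^{l}
  \lesssim\Big(\frac{\mathcal H_{d-1}(\partial K)}{c\,V_d(K)}\Big)^{l},
\end{displaymath}
which is \eqref{eq:joint_vertex}.

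The main obstacle is exactly the passage from the union volume $V_d(\bigcup_i C_i)$ to this separated product bound: when several of the $z_i$ cluster within distance of order $1/n$, their caps overlap heavily and the factorisation above fails. This I would handle by partitioning the domain of integration according to the clustering pattern of the boundary projections of the $z_i$; on each cluster the overlapping caps still contribute a volume bounded below by the largest $\delta(z_i)$ in the cluster, while the Lebesgue measure of the clustered configurations supplies the compensating negative power of $n$, so that every term remains $O(1)$ after multiplication by $n^l$. A secondary technical point is keeping the direction net uniform in $n$ and in the depth $\delta$; here the regularity of $K$ ensures that the outer normal at the nearest boundary point varies continuously, so a single fixed net suffices. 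Since uniform integrability combined with Theorem~\ref{thm:f_vector_convergence} delivers both the convergence in \eqref{eq:mom_convergence} and the finiteness of its right-hand side, no separate moment computation for the zero cell $Z^o$ is needed.
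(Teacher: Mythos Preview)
Your overall strategy---reduce to uniform integrability of $\fv_0(Q_n)^p$ via Corollary~\ref{cor:combinatorial}, then expand the $p$-th moment and bound $n^l\P\{\xi_1,\dots,\xi_l\text{ are vertices}\}$---coincides with the paper's. The difference lies entirely in how that last bound is obtained, and there your argument has a genuine gap.

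The step that fails as written is the claim that a \emph{single fixed} net of directions on $\Sphere$ handles the union over admissible translates $x$. The vertex event for $\xi_i$ is a union over all $x$ with $\xi_i\in\partial K+x$, and the associated cap $K\setminus(K+x)$ has width of order $\|x\|\ge\delta(\xi_i)$. For the union bound to yield an exponential factor $e^{-c\,n\,\delta(\xi_i)}$, you need, for every admissible $x$, a net-indexed region of volume $\gtrsim\delta(\xi_i)$ contained in $K\setminus(K+x)$. But two caps of depth $\delta$ whose directions differ by a fixed angle $\theta$ are essentially disjoint once $\delta\ll\theta$, so a net whose mesh does not shrink with $\delta(\xi_i)$ cannot supply such regions. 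A mesh of order $\delta$ produces $\sim\delta^{-(d-1)}$ net points and the union bound then carries an extra factor $\delta(\xi_i)^{-(d-1)}$ that must be absorbed elsewhere; in classical random-polytope theory this is exactly what the economic cap covering (Macbeath regions, B\'ar\'any--Larman) is for, and transporting that machinery to curved supporting $K$-spheres is not routine. The clustering analysis you flag as the ``main obstacle'' would still remain after that.

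The paper bypasses both issues. It bounds the vertex indicator by the indicator that $(K-\xi_i)^o$ is not contained in the convex hull of the remaining $(K-\xi_j)^o$, passes to fresh independent $\eta_1,\dots,\eta_m$, dualises to the events $\{K-\eta_j\not\supseteq X_n\}$, and introduces the gauge $\chi_n:=\inf\{t\ge0:tK\supseteq X_n\}$. Since $X_n\subseteq\chi_nK$ one has $\{K-\eta_j\not\supseteq X_n\}\subseteq\{K-\eta_j\not\supseteq\chi_nK\}$, and conditionally on $X_n$ the latter has probability $1-(1-\chi_n)^d\le d\,\chi_n$. Thus the whole problem reduces to $\sup_n n^m\E\chi_n^m<\infty$, i.e.\ to a uniform tail bound $\P\{K^o\not\subset tX_n^o\}\le M(1-c_0't)^n$ for $t\in(0,1]$. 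This is obtained not by caps but by the exoskeleton construction: for each sample point one records the pair (outer normal at the nearest boundary point, reciprocal distance to the boundary), and Lemma~\ref{lem:disjoint_r} furnishes finitely many spherical sectors $R_1,\dots,R_M$ such that one hit in each sector already forces $K^o\subset tX_n^o$. No direction net, no cap covering, and no clustering decomposition are needed.
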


\begin{remark}
  From the H\"{o}lder inequality we immediately obtain the convergence
  of all mixed moments of the vector $\Fv(Q_n)$, as $n\to\infty$, to
  the corresponding mixed moments of the $f$-vector of 
  $Z^o$.
\end{remark}

It is well known, see, for example, Corollary~2.13 and Theorem~2.14
from \cite{zieg95}, that the $f$-vector of $Z^o$ is the reversed
$f$-vector of $Z$, that is, $f_i(Z^o)=f_{d-i-1}(Z)$,
$i=0,\dots,d-1$. In particular, $\fv_{d-1}(Q_n)$ converges to the
number of vertices of the zero cell $Z$.

While it is genuinely difficult to calculate moments of the $f$-vector
for the zero cell $Z$ of an anisotropic tessellation (and even in the
isotopic case all first moments have been obtained only recently in
\cite{kab:20}), an explicit formula is exceptionally available for the
expectation of $f_0(Z)=f_{d-1}(Z^o)$. If the directional distribution
of the hyperplane tessellation is even, then
\begin{equation}
  \label{eq:f_0_Z_gen1}
  \E f_0(Z)=2^{-d}d! V_d(L)V_d(L^o),
\end{equation}
see \cite{sch82} and \cite[p.~376]{sch:weil08}. The convex body $L$ on
the right-hand side is determined by the directional distribution of
the hyperplane tessellation. In the special case, when this
directional distribution is $S_{d-1}(K,\cdot)$ and $K$ is origin
symmetric, the set $L$ is the \emph{projection body} of $K$, that is,
the support function of $L$ in direction $u\in\Sphere$ is equal to the
$(d-1)$-dimensional volume of the projection of $K$ onto the
hyperplane orthogonal to $u$. It is customary to denote the projection
body of $K$ by $\Pi K$, see \cite[Section~10.9]{schn2}, but we shall
use the notation $\piv K$ to avoid possible confusions with the
Poisson point process $\Pi_K$.

Note that the scaling parameter of
$S_{d-1}(K,\cdot)$ does not matter, since it cancels out in the
product of the volume of $L$ and its polar body. For not necessarily
origin symmetric $K$, the formula for $\E f_0(Z)$ seems to be
unavailable in the literature.  Using the same techniques as in
\cite{sch82}, we calculate this value in
Theorem~\ref{thm:zero_cell_vertexes} in the Appendix. The formula
reads as
\begin{equation}\label{eq:f_0_Z_gen2}
  \E f_0(Z)=\frac{1}{d} \int_{\Sphere}
  (h(\piv K,x))^{-d} J(x){\rm d}x,
\end{equation}
where $h(\piv  K,x)$ is the support function of the projection body $\piv K$ of $K$, see formula \eqref{app:eq:h_def} below,
\begin{equation}\label{eq:f_0_Z_gen3}
  J(x):=\int_{(\Sphere)^{d}}[v_1,v_2,\ldots,v_d]
  \one_{\{\langle v_1,x\rangle\geq 0,\ldots,\langle v_d,x\rangle\geq 0
    \}}
  S_{d-1}(K,{\rm d}v_1)\cdots S_{d-1}(K,{\rm d}v_d),\quad x\in\Sphere,
\end{equation}
and $[v_1,v_2,\ldots,v_d]$ is the volume of the parallelepiped spanned
by the vectors $v_1,\ldots,v_d$. For an origin symmetric $K$, the
quantity $J(x)$ does not depend on $x$ and is equal to the constant
\begin{displaymath}
  J:=2^{-d}\int_{(\Sphere)^{d}}[v_1,v_2,\ldots,v_d]
  S_{d-1}(K,{\rm d}v_1)\cdots S_{d-1}(K,{\rm d}v_d)
  =2^{-d}d!V_d(\piv K).
\end{displaymath}
In this case \eqref{eq:f_0_Z_gen2} reduces to \eqref{eq:f_0_Z_gen1} in
view of
\begin{displaymath}
  \frac{1}{d} \int_{\Sphere} (h(\piv K,x))^{-d}{\rm d}x=V_d((\piv K)^{o}).
\end{displaymath}


\subsection{Proof of Theorem~\ref{thm:f_vector_convergence}}
\label{sec:proof-theor-refthm:f-1}

Keeping in mind Theorem~\ref{th:convergence-pp}, we shall deduce
Theorem~\ref{thm:f_vector_convergence} from the following lemma which
establishes continuity of the $\Fv$-vector in the sense of
Definition~\ref{def:facial_structure} with respect to convergence of
families of convex bodies regarded as point processes on
$\sK^d_0\setminus\{0\}$.

\begin{lemma}
  \label{lemma:conv-tuples}
  Let $\sL^{(n)}:=\{L_i^{(n)}, i\geq1\}$, $n\in\NN_0$, be a sequence
  of locally finite point processes on $\sK^d_{0} \setminus\{0\}$.
  Suppose that all sets in $\sL^{(n)}$, $n\in\NN$, are strictly
  convex, and $\sL^{(n)}\to \sL^{(0)}$ in the vague topology on
  $\sK^d_0\setminus \{0\}$ as $n\to\infty$.  Further, assume the
  following:
  \begin{itemize}
  \item[(i)] The sets in $\sL^{(0)}$ are in general position.
  \item[(ii)] There exists a finite collection of singletons
    $\{x_1,\dots,x_M\}$ such that
    \begin{displaymath}
      \conv(\sL^{(0)})=\conv\{x_1,\dots,x_M\}\quad\text{and}
      \quad \{x_j\}=L^{(0)}_{r_j}\cap \partial\conv(\sL^{(0)}),\quad j=1,\dots,M,
    \end{displaymath}
    for a set of pairwise distinct indices $\{r_1,\dots,r_M\}$. 
  \item[(iii)] The convex hull $\conv(\sL^{(0)})$ contains the origin
    in the interior.
  \end{itemize}
  Then, for all sufficiently large $n\in\NN$,
  \begin{enumerate}
  \item[\textsc{(a)}] the sets in $\sL^{(n)}$ are in general position;
  \item[\textsc{(b)}] $\Fv(\sL^{(n)})=\Fv(\sL^{(0)})=\mathbf{f}\big(\conv(\sL^{(0)})\big)$;
  \item[\textsc{(c)}] $\fv_{d-1}(\sL^{(n)})$ coincides with the number of
    $(d-1)$-dimensional faces of $\conv(\sL^{(n)})$.
  \end{enumerate}
\end{lemma}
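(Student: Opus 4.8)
The plan is to reduce everything to a statement about finitely many convex bodies and then push the vague convergence through to control how the faces of $\conv(\sL^{(n)})$ stabilise. First I would use assumption (ii) to restrict attention to a finite subfamily. Since $\conv(\sL^{(0)})=\conv\{x_1,\dots,x_M\}$ is a polytope whose vertices $x_j$ are boundary contact points of the distinguished sets $L^{(0)}_{r_j}$, the combinatorial structure of $\conv(\sL^{(0)})$ is that of the polytope $P_0:=\conv\{x_1,\dots,x_M\}$. By Example~\ref{example:poly} and general position (assumption (i)), each $m$-dimensional face of $P_0$ is the convex hull of exactly $m+1$ of the vertices $x_j$, and each such vertex is the unique boundary-contact singleton of a distinct set $L^{(0)}_{r_j}$. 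Thus $\Fv(\sL^{(0)})=\mathbf{f}(P_0)=\mathbf{f}(\conv(\sL^{(0)}))$, which is the content of the middle equality in \textsc{(b)}.

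Next I would establish the topological stability of the contact points. Because $\sL^{(n)}\to\sL^{(0)}$ vaguely and $\conv(\sL^{(0)})$ contains the origin in its interior (assumption (iii)), for all sufficiently large $n$ there are atoms $L^{(n)}_{s_j}$ of $\sL^{(n)}$ converging, in the Hausdorff metric, to $L^{(0)}_{r_j}$ for $j=1,\dots,M$, and all other atoms of $\sL^{(n)}$ contribute nothing new to the boundary in a neighbourhood of $\partial\conv(\sL^{(0)})$. The key point is that the support function is continuous under Hausdorff convergence, so $h(\conv(\sL^{(n)}),\cdot)\to h(\conv(\sL^{(0)}),\cdot)$ uniformly on $\Sphere$; since $P_0$ is a polytope, its faces correspond to open regions of directions $u$ on which $h(P_0,\cdot)$ is attained at a fixed vertex, and for large $n$ the support set $F(\conv(\sL^{(n)}),u)$ is a singleton lying in exactly one translated atom $L^{(n)}_{s_j}$ throughout each such region. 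This yields that every face of $\conv(\sL^{(n)})$ is met by precisely the same number of sets from $\sL^{(n)}$ as the corresponding face of $\conv(\sL^{(0)})$ is met by sets of $\sL^{(0)}$, giving both the general position claim \textsc{(a)} (via Proposition~\ref{prop:def_gp_strictly_convex}(iii), as the atoms are strictly convex) and the first equality in \textsc{(b)}.

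For \textsc{(c)}, I would invoke Lemma~\ref{lemma:fd}: the assumption that every $d$-tuple of sets forming a $(d-1)$-dimensional face of $\sL^{(n)}$ does so for a unique $K$-facet is automatic here once the contact points have separated, because in the limit the $M$ vertices are the distinct contact singletons of distinct sets, and for large $n$ the perturbed contact points remain distinct and lie on distinct supporting translates. Hence $\fv_{d-1}(\sL^{(n)})$ equals the number of $(d-1)$-dimensional faces of $\conv(\sL^{(n)})$.

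The main obstacle I anticipate is the uniform control in the second paragraph, namely ruling out that some atom of $\sL^{(n)}$ not among the $M$ converging ones pokes through the boundary of $\conv(\sL^{(n)})$ near a face, which would spuriously increase $\fv_k$ or violate general position. This is where the vague convergence must be leveraged carefully: one needs that outside a fixed neighbourhood of $\{0\}$ the atoms of $\sL^{(n)}$ accumulate exactly where those of $\sL^{(0)}$ do (a consequence of $\sL^{(n)}\to\sL^{(0)}$ on continuity sets in $\sB_0$), together with the fact that the finitely many vertices of $P_0$ are strictly separated from the rest of $\conv(\sL^{(0)})$, so that a Hausdorff-distance argument with a fixed separation gap applies for all large $n$.
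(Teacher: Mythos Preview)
Your reduction to a finite subfamily and the identification $\Fv(\sL^{(0)})=\mathbf{f}(P_0)$ are correct and match the paper. However, there are two genuine gaps.

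\textbf{The liminf direction of \textsc{(b)}.} Your normal-fan argument only controls directions $u$ in the \emph{interior} of the normal cone of a vertex of $P_0$: there $F(\conv(\sL^{(n)}),u)$ is indeed a singleton in one $L^{(n)}_{s_j}$. But this says nothing about higher-dimensional faces of $\conv(\sL^{(n)})$, which correspond to directions on the \emph{boundary} of those cones, where the supporting hyperplane must touch several bodies simultaneously. To show $\fv_m(\sL^{(n)})\geq\fv_m(\sL^{(0)})$ you must, for each $m$-face $F_m$ of $P_0$ with vertices $x_{i_1},\dots,x_{i_{m+1}}$, \emph{construct} a direction $v=v(n)$ such that $h(L^{(n)}_{s_{i_1}},v)=\cdots=h(L^{(n)}_{s_{i_{m+1}}},v)$ and this common value strictly dominates the support function of every other atom. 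Uniform convergence of support functions does not produce such a $v$; it is a codimension-$m$ condition on $\Sphere$. The paper handles this by an explicit construction: it adjoins fictitious segments to make the problem full-dimensional, then finds $v$ as the outer normal at the point where a segment $[z-\eps u,z+\eps u]$ exits the convex hull of the truncated bodies, and finally checks that the resulting hyperplane touches each $L^{(n)}_{s_{i_k}}$. Your sketch has no analogue of this step, and the claim ``every face of $\conv(\sL^{(n)})$ is met by precisely the same number of sets\dots'' tacitly presupposes a bijection between faces of $\conv(\sL^{(n)})$ (of which there are uncountably many, since the bodies are curved) and faces of $P_0$, which does not exist.

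\textbf{Part \textsc{(c)}.} Lemma~\ref{lemma:fd} is stated and proved for the specific family $\sL_A=\{(K-y)^o:y\in A\}$ and uses the correspondence with $K$-facets of $Q=\bhoper{K}{A}$; it is not applicable to an arbitrary family $\sL^{(n)}$. Moreover, the issue in \textsc{(c)} is not whether the contact points are distinct, but whether a single $d$-tuple $\{L^{(n)}_{s_{i_1}},\dots,L^{(n)}_{s_{i_d}}\}$ can generate two different $(d-1)$-dimensional faces of $\conv(\sL^{(n)})$. The paper rules this out by a direct argument: if two such faces existed for infinitely many $n$, passing to limits forces both to converge to the same $(d-1)$-face $F^{(0)}_{d-1}$ of $P_0$, but since the faces have disjoint relative interiors their common limit would have to lie in the $(d-2)$-dimensional intersection of the limiting supporting hyperplanes, contradicting $\dim F^{(0)}_{d-1}=d-1$.
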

\begin{proof}
  We start by showing that the imposed assumptions imply that we can restrict our attention to finite subfamilies of $\{L_i^{(n)}, i\geq1\}$, $n\in\NN_0$.
  
  By (iii), there exits a ball $B_{2r}(0)$ such that
  \begin{equation}
    \label{eq:proof_continuity1}
    B_{2r}(0)\subset \Int\big(\conv(\sL^{(0)})\big).
  \end{equation}
  Since $\sL^{(0)}$ is locally finite on $\sK^d_0\setminus\{0\}$ and
  taking (ii) into account, the family $\sL^{(0)}$ contains only a
  finite number of sets which intersect $B^c_{r}(0)$, say,
  $L_1^{(0)},L_2^{(0)},\dots,L_l^{(0)}$, and
  $L_j^{(0)}\subset B_{r}(0)$ for all $j>l$.  By the imposed vague
  convergence, for all sufficiently large $n\in\NN$, the
  family $\sL^{(n)}$ contains exactly $l$ sets which intersect
  $B^c_{r}(0)$, say, $L_1^{(n)},L_2^{(n)},\dots,L_l^{(n)}$, and
  \begin{equation}
    \label{eq:proof_continuity11}
    L_j^{(n)}\to L_j^{(0)}\quad \text{as }\quad n\to\infty,\quad j=1,\dots,l,
  \end{equation}
  where the convergence is understood in the Hausdorff metric. 
  Furthermore, by the continuity of the convex hull operation, see
  \cite[Theorem~12.3.5]{sch:weil08},
  \begin{equation}
    \label{eq:proof_continuity2}
    \conv\bigg(\bigcup_{j=1}^{l}L_j^{(n)}\bigg)
    \to \conv\bigg(\bigcup_{i=j}^{l}L_j^{(0)}\bigg)=\conv(\sL^{(0)})
    \quad \text{as }\; n\to\infty.
  \end{equation}
  By \eqref{eq:proof_continuity1} and the convergence in
  \eqref{eq:proof_continuity2}, 
  $B_r(0)\subset \conv\left(\bigcup_{j=1}^{l}L_j^{(n)}\right)$ for
  all sufficiently large  $n$, and we conclude that
  \begin{equation}
    \label{eq:proof_continuity3a}
    \conv\big(\sL^{(n)}\big)=\conv\bigg(\bigcup_{i=1}^{l}L_i^{(n)}\bigg)
  \end{equation}
  and
  \begin{equation}
    \label{eq:proof_continuity3b}
    \conv\big(\sL^{(n)}\big)\to\conv\big(\sL^{(0)}\big)\quad
    \text{as }\quad n\to\infty. 
  \end{equation}
  In what follows we fix $l$ and assume that $n$ is picked so large
  that \eqref{eq:proof_continuity3a} holds.

  Recall that the upper limit of
  a sequence of sets is the set of limits for all convergent
  subsequences of points selected from these sets.  Furthermore,
  recall that the intersection operation is upper semicontinuous,
  meaning that the upper limit of intersections of two sets is a
  subset of the intersection of their upper limits, see
  \cite[Theorem~12.2.6]{sch:weil08}.

  \vspace{2mm}
	\noindent
	{\sc Proof of part \textsc{(a)}.} We argue by contradiction. If
  the sets in $\sL^{(n)}$ are not in general position for infinitely
  many $n\in\NN$, then, by the imposed strict convexity and
  Proposition~\ref{prop:def_gp_strictly_convex}(ii), for every such
  $n\in\NN$ there exist $m_n\in\{0,1,\dots,d-1\}$ and a face
  $F_{m_n}^{(n)}\in\sF_{m_n}\big(\conv(\sL^{(n)})\big)$ which is hit by
  $L^{(n)}_{i_{1,n}},\dots,L^{(n)}_{i_{k_n,n}}$, where $k_n\geq m_n+2$
  and $1\leq i_{1,n},\dots,i_{k_n,n}\leq l$. Since there are only
  finitely many possible values for $m_n$, $k_n$ and
  $i_{1,n},\dots,i_{k_n,n}$, we can pick $m\in\{0,1,\dots,d-1\}$,
  $k\geq m+2$, and $1\leq i_1,\dots,i_k\leq l$, such that
  $F_{m}^{(n)}\in\sF_{m}\big(\conv(\sL^{(n)})\big)$ is hit by
  $L^{(n)}_{i_1},\dots,L^{(n)}_{i_{k}}$ for infinitely many $n\in\NN$.

  As a face of $\conv(\sL^{(n)})$, the set $F_m^{(n)}$ is contained in
  an exposed face of $\conv(\sL^{(n)})$. By the definition of an
  exposed face, this means that
  \begin{equation}
    \label{eq:proof_cont_gen_pos1}
    F_m^{(n)}\subset \conv(\sL^{(n)})\cap H_n,
  \end{equation}
  for a supporting hyperplane $H_n$ of $\conv(\sL^{(n)})$. Let $u^{(n)}$
  be the unit normal vector to $H_n$, so that 
  \begin{displaymath}
    H_n=\Big\{x\in\R^d:\langle x,u^{(n)}\rangle
    =h\big(\conv(\sL^{(n)}),u^{(n)}\big)\Big\},
  \end{displaymath}
  where $h\big(\conv(\sL^{(n)}),u^{(n)}\big)$ is the support function of
  $\conv(\sL^{(n)})$ at $u^{(n)}$.
  By passing to a subsequence we can assume that $u^{(n)}\to u^{(0)}$
  as $n\to\infty$. This implies
  \begin{align}
    &\hspace{-0.5cm}\Big|h\big(\conv(\sL^{(n)}),u^{(n)}\big)
      -h\big(\conv(\sL^{(0)}),u^{(0)}\big)\Big|\notag \\
    &\leq \Big|h\big(\conv(\sL^{(n)}),u^{(n)}\big)
      -h\big(\conv(\sL^{(n)}),u^{(0)}\big)\Big|+\Big|h\big(\conv(\sL^{(n)}),u^{(0)}\big)
      -h\big(\conv(\sL^{(0)}),u^{(0)}\big)\Big|\notag\\
    &\leq \big\|\conv(\sL^{(n)})\big\|\cdot \|u^{(n)}-u^{(0)}\|
      +\Big|h\big(\conv(\sL^{(n)}),u^{(0)}\big)
      -h\big(\conv(\sL^{(0)}),u^{(0)}\big)\Big|\notag\\
    &\to 0\quad \text{as}\; n\to\infty,\label{eq:cont_lemma_proof1}
  \end{align}
  where $\|L\|:=\sup\big\{\|x\|:x\in L\big\}$, and we have used the Lipschitz
  property of support functions (see, e.g., \cite[Lemma~1.8.12]{schn2})
  and the fact that the convergence in \eqref{eq:proof_continuity3b}
  implies pointwise convergence of the corresponding support functions.
  The above argument also implies that 
  \begin{equation}
    \label{eq:proof_cont_gen_pos2}
    \limsup_{n\to\infty}\,H_n=\lim_{n\to\infty}H_n=H^{(0)}
    :=\Big\{x\in\R^d:\langle x,u^{(0)}\rangle
    =h\big(\conv(\sL^{(0)}),u^{(0)}\big)\Big\}
  \end{equation}
  is a supporting hyperplane of $\conv(\sL^{(0)})$.

  By the Blaschke selection theorem, the sequence of convex sets
  $(F_m^{(n)})_{n\in\NN}$ has a convergent subsequence. Passing to such
  a subsequence, assume that $F_m^{(n)} \to F$ as $n\to\infty$.  We
  claim that the limit $F$ is a subset of an exposed face of
  $\conv(\sL^{(0)})$. Indeed, letting $n\to\infty$ in
  \eqref{eq:proof_cont_gen_pos1} and using upper semicontinuity of the
  intersection yield that
  \begin{multline}\label{eq:proof_cont_gen_pos3}
    F=\limsup_{n\to\infty}F_m^{(n)}
    \subset \limsup_{n\to\infty}\;\big(\conv(\sL^{(n)})\cap H_n\big)\\
    \subset\limsup_{n\to\infty}\conv(\sL^{(n)})\,
    \cap\, \limsup_{n\to\infty}H_n=\conv(\sL^{(0)})\cap H^{(0)},
  \end{multline}
  where the last equality is a consequence of
  \eqref{eq:proof_continuity3a} and \eqref{eq:proof_cont_gen_pos2}.
  It remains to note that the dimension $m'$ of $F$ is at most $m$ and
  $F$ is hit by $L^{(0)}_{i_1},\dots,L^{(0)}_{i_{k}}$. The latter
  follows again from the upper semicontinuity of the intersection:
  \begin{displaymath}
    \varnothing\neq \limsup_{n\to\infty} \big(F_m^{(n)}\cap L_{i_j}^{(n)}\big)
    \subset \limsup_{n\to\infty} F_m^{(n)}\cap\,
    \limsup_{n\to\infty} L_{i_j}^{(n)}=F\cap L_{i_j},\quad j=1,\dots,k,
  \end{displaymath}
  in view of \eqref{eq:proof_continuity11}. This contradicts the
  assumption that the sets in $\sL^{(0)}$ are in general position
  because \eqref{eq:def_gp_formula} is violated for $A=F$. The proof
  of part~\textsc{(a)} is complete.

  \vspace{2mm}
  \noindent
  {\sc Proof of part \textsc{(b)}.} The second equality in
  part~\textsc{(b)} follows from the discussion in
  Example~\ref{example:poly} because assumptions (i) and (ii) imply
  that the $f$-vector of $\conv(\sL^{(0)})$ is completely determined
  by the set $\{L_{r_1},\dots,L_{r_M}\}$ which in turn can be replaced
  by the set of singletons $\{x_1,\dots,x_M\}$.  By the construction,
  the set of indices $\{r_1,\dots,r_M\}$ is a subset of
  $\{1,2,\dots,l\}$ in view of the equality in
  \eqref{eq:proof_continuity2}. Without loss of generality and in
  order to avoid towering indices let us assume that $l=M$ and $r_j=j$
  for $j=1,\dots,M$.
  
  Let us prove the first equality in (b). Fix $m\in\{0,1,\ldots,d-1\}$
  and put $s:=\limsup_{n\to\infty} \sum_{j=m}^{d-1}\fv_j(\sL^{(n)})$.
  By passing to a subsequence, assume that, for each $n\in\NN$, there
  exists a collection of $s$ different $(j+1)$-tuples,
  $j\in\{m,\ldots,d-1\}$ of sets from
  $\big\{L_1^{(n)},\dots,L_l^{(n)}\big\}$ such that for each of these 
  tuples there exists a $j$-dimensional face of $\conv(\sL^{(n)})$,
  intersecting each set from the corresponding tuple. Our first goal is
  to show that
  \begin{equation}
    \label{eq:cont_lemma_proof_limsup}
    \sum_{j=m}^{d-1}\fv_j(\sL^{(0)})\geq s
    =\limsup_{n\to\infty} \sum_{j=m}^{d-1}\fv_j(\sL^{(n)}),
  \end{equation}
  that is, for each aforementioned $(j+1)$-tuple there exists at least
  one face of $\conv(\sL^{(0)})$ of dimension at least $j$ and which
  is hit by the limiting sets of the chosen $(j+1)$-tuple. The proof
  of \eqref{eq:cont_lemma_proof_limsup} goes along similar lines as
  the proof of part~\textsc{(a)}. Namely, pick $j\in\{m,\ldots,d-1\}$,
  a $(j+1)$-tuple
  $\big\{L_{i_{1,n}}^{(n)},\dots,L_{i_{j+1,n}}^{(n)}\big\}$ and
  $F^{(n)}_{j}\in\mathcal{F}_j(\conv(\sL^{(n)}))$ such that
  $L_{i_{k,n}}^{(n)}\cap F^{(n)}_{j}\neq \varnothing$ for all
  $k=1,\ldots,j+1$. By passing to subsequences, it is possible to
  assume that $i_{k,n}=i_k$ for all $k=1,\ldots,j+1$. Thus, from now
  on assume that for all $n\in\NN$, each
  $L_{i_1}^{(n)},\dots,L_{i_{j+1}}^{(n)}$ intersects
  $F_j^{(n)}\in\sF_j\big(\conv(\sL^{(n)})\big)$. Let
  $x_{i_k}^{(n)}\in L_{i_k}^{(n)}\cap F_k^{(n)}$, $k=1,\dots,j+1$,
  and, by passing once again to subsequences, assume that, as
  $n\to\infty$, $x_{i_k}^{(n)}\to x_{i_k}^{(0)}$ for all
  $k=1,\dots,j+1$, and that $F_{j}^{(n)}\to F$. Since each
  face is a subset of an exposed face, $F_j^{(n)}$ is a subset of the
  support set $F\big(\conv(\sL^{(n)}),u^{(n)}\big)$ of
  $\conv(\sL^{(n)})$ in direction given by a unit vector $u^{(n)}$. By
  passing once again to a subsequence, assume that
  $u^{(n)}\to u^{(0)}$ as $n\to\infty$. Then,
  $F\big(\conv(\sL^{(n)}),u^{(n)}\big)$ converges to a subset of an
  exposed face $F\big(\conv(\sL^{(0)}),u^{(0)}\big)$ of
  $\conv(\sL^{(0)})$, see \eqref{eq:proof_cont_gen_pos2} and
  \eqref{eq:proof_cont_gen_pos3} above, and
  $F\subseteq F\big(\conv(\sL^{(0)}),u^{(0)}\big)$. Therefore, for all
  $k=1,\ldots,j+1$,
  \begin{displaymath}
    x_{i_k}^{(0)}\in \limsup_{n\to\infty }
    L_{i_k}^{(n)}\cap F_j^{(n)}
    =L_{i_k}^{(0)}\cap F\subseteq
    L_{i_k}^{(0)}\cap F\big(\conv(\sL^{(0)}),u^{(0)}\big),
  \end{displaymath}
  and, moreover,
  $L_{i_k}^{(0)}\cap
  F\big(\conv(\sL^{(0)}),u^{(0)}\big)=\big\{x_{i_k}^{(0)}\big\}$ by
  the imposed assumption (ii). Thus, the support set
  $F\big(\conv(\sL^{(0)}),u^{(0)}\big)$ (which is also a face because
  $\conv(\sL^{(0)})$ is a polytope) is intersected by
  $L_{i_1}^{(0)},\dots,L_{i_{j+1}}^{(0)}$. The dimension of
  $F\big(\conv(\sL^{(0)}),u^{(0)}\big)$ is not smaller than $m$ due to the
  imposed assumption (i). This completes the proof of
  \eqref{eq:cont_lemma_proof_limsup}.

  In order to finish the proof of part~\textsc{(b)} it remains to show
  that, for each $m\in\{0,1,\ldots,d-1\}$, we have
  \begin{equation*}
    \liminf_{n\to\infty} \sum_{j=m}^{d-1}\fv_m(\sL^{(n)})
    \geq \sum_{j=m}^{d-1}\fv_m(\sL^{(0)}).
  \end{equation*}
  We shall actually prove that, for each $m\in\{0,1,\ldots,d-1\}$,
  \begin{equation}
    \label{eq:liminf_cont_lemma_proof}
    \liminf_{n\to\infty} \fv_m(\sL^{(n)}) \geq \fv_m(\sL^{(0)}),
  \end{equation}
  that is, for all sufficiently large $n\in\NN$ and each
  $m$-dimensional face of $\conv(\sL^{(0)})$ there exists an
  $m$-dimensional face of $\conv(\sL^{(n)})$, which is hit by exactly
  $m+1$ sets from $\{L_1^{(n)},\ldots,L_l^{(n)}\}$. While, in view of
  \eqref{eq:proof_continuity11}, \eqref{eq:proof_continuity3b} and the
  imposed general position condition, the latter looks quite
  plausible, the rigorous proof is rather involved.
  
  Fix an $m$-dimensional face $F_m$ of $\conv(\sL^{(0)})$ for some
  $m\in\{0,1,\dots,d-1\}$ and $u\in N(\conv(\sL^{(0)}),F_m)$. Then
  $F_m$ is a subset of the $(d-1)$-dimensional hyperplane $H$
  orthogonal to $u$.  By the assumptions (i) and (ii)
  $\sM(\sL^{(0)},F_m)$ is a collection of $m+1$ sets from $\sL^{(0)}$,
  which for simplicity is assumed to be
  $\{L_{1}^{(0)},\dots,L_{m+1}^{(0)}\}$ and such that
  $\{x_{j}\}:=L_{j}^{(0)}\cap F_m$, $j=1,\dots,m+1$. Note that $F_m$
  is the convex hull of affinely independent points
  $\{x_{1},\dots,x_{m+1}\}\subset H$ and the normal cone
  $N(L_j^{(0)},x_j)\supset N(\conv(\sL^{(0)}),x_j)$ contains $u$, for
  all $j=1,\ldots,m+1$. If $m<d-1$, extend this set to
  $\{x_1,\dots,x_{d}\}$ by adding arbitrary fictitious points
  $\{x_{m+1},\ldots,x_{d}\}$ from $H$ in such a way that the points
  $\{x_1,\dots,x_{d}\}$ are affinely independent. For every fictitious
  point added, introduce a fictitious set by letting 
  $\widetilde{L}_j^{(n)}:=\widetilde{L}^{(0)}_j:=\{x_j+tu:\;
  t\in[-c,0]\}$, $j=m+2,\dots,d$, $n\in\NN$, where $c>0$ is an
  arbitrary positive constant. Finally, put
  \begin{displaymath}
    \widetilde{L}_j^{(n)}=L_{j}^{(n)}\quad\text{and}\quad
    \widetilde{L}_j^{(0)}:=L_{j}^{(0)}\quad\text{for}\quad j=1,\ldots,m+1,
  \end{displaymath}  
  and denote by $\widetilde{F}_m$ the convex hull of $\{x_1,\dots,x_{d}\}$.
  
  The reason behind introducing these fictitious objects is the
  following. Recall that our goal is to construct an $m$-dimensional
  face of $\conv(\sL^{(n)})$ which is hit by (and only by)
  $L_j^{(n)}$, for $j=1,\ldots,m+1$. We shall construct this face as
  an intersection of the convex hull $\conv(\sL^{(n)})$ with its
  appropriate supporting hyperplane, say, $\widetilde{H}_n$. If
  $m<d-1$, neither such a face nor a supporting hyperplane
  $\widetilde{H}_n$ are unique. By introducing fictitious points we
  remove these degrees of freedom and construct the supporting
  hyperplane $\widetilde{H}_n$ in a more or less straightforward
  way. Then we ``forget'' about fictitious points and sets and show
  that the constructed $\widetilde{H}_n$ possesses the required
  properties.
  
  Take an arbitrary point $z$ from the relative interior of $\widetilde{F}_m$
  and fix a sufficiency small $\eps>0$ such that:
  \begin{itemize}
  \item $\eps$ is smaller than the distance from $z$ to the
    relative boundary of $\widetilde{F}_m$;
  \item for arbitrary $y_i\in B_\eps(x_i)$,
    $i=1,\dots,d$, the points $\{y_1,\dots,y_{d}\}$ are
    affinely independent.
  \end{itemize}
  Clearly, $z$ belongs to the convex hull of $\{y_1,\dots,y_{d}\}$ for
  arbitrary $y_i\in B_\eps(x_i)\cap H$, $i=1,\dots,d$.
  Furthermore, $z$ does not belong to the convex hull of any strict
  subfamily of sets from
  $\big\{B_\eps(x_1)\cap H,\dots,B_\eps(x_{d})\cap
  H\big\}$. Indeed, the convex hull of any such subfamily lies in the
  $\eps$-neighbourhood of the relative boundary of
  $\widetilde{F}_m$ and, therefore, does not contain $z$. For the rest
  of the proof the chosen $u,z$ and the hyperplane $H$ remain
  fixed. Put $D_\eps=D_\eps(u):=H+B_{\eps}(0)$.
  Note that each ball $B_\eps(x_j)$, $j=1,\dots,d$, is a subset of
  $D_\eps$. 
  
  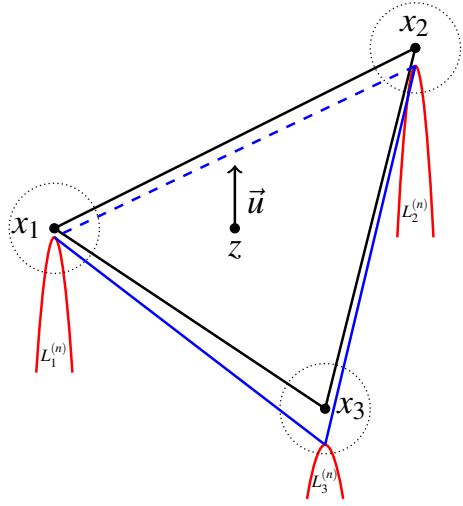
\begin{figure}[!hbtp]
    \centering
    \scalebox{1.2}{\begin{tikzpicture}
        \draw[fill=black]  (2,2) circle (0.05);
        \draw[fill=black]  (1,-2) circle (0.05);
        \draw[fill=black]  (-2,0) circle (0.05);
        
        \node[left] (c) at (-2,0) {$x_1$};
        \node[above] (c) at (2,2) {$x_2$};
        \node[right] (c) at (1,-2) {$x_3$};

        \draw[fill=black]  (0,0) circle (0.05);
        \node[below] (c) at (0,0) {$z$};

        \draw[thick,->] (0,0) -- (0,0.7);
        \node[right] (c) at (0,0.3) {$\vec{u}$};

        \draw [thick,red] plot [smooth, tension=1] coordinates {(1.8,-0.1) (2.0,1.8) (2.2,-0.1)};
        \draw [thick,red] plot [smooth, tension=1] coordinates {(0.8,-3.0) (1.0,-2.4) (1.2,-3.0)};
        \draw [thick,red] plot [smooth, tension=1] coordinates {(-2.2,-1.6) (-2,-0.1) (-1.8,-1.6)};
        \node[scale=0.5] (c) at (2.0,0.2) {$L_2^{(n)}$};
        \node[scale=0.5] (c) at (1.0,-2.8) {$L_3^{(n)}$};
        \node[scale=0.5] (c) at (-2.0,-1.4) {$L_1^{(n)}$};


        \draw[thick] (1,-2) -- (2,2);
        \draw[thick] (2,2) -- (-2,0);
        \draw[thick] (1,-2) -- (-2,0);

        \draw [thick,blue] (2.0,1.8) -- (1.0,-2.4) ;
        \draw [thick,dashed,blue] (2.0,1.8) -- (-2,-0.1) ;
        \draw [thick,blue] (-2,-0.1) -- (1.0,-2.4) ;

        \draw[densely dotted]  (2,2) circle (0.5);
        \draw[densely dotted]  (1,-2) circle (0.5);
        \draw[densely dotted]  (-2,0) circle (0.5);
        
      \end{tikzpicture}}
    \caption{Graphical illustration for the proof of relation
      \eqref{eq:liminf_cont_lemma_proof} for $d=3$ and $m=2$. A face
      $F_m=\widetilde{F}_m=\conv\{x_1,x_2,x_3\}$ of $\conv(\sL^{(0)})$
      is contained in a hyperplane $H$ passing through $x_1,x_2,x_3$
      and having a normal vector $u$; $z$ is a point in the relative
      interior of $\widetilde{F}_m$. The sets $L_1^{(n)}$, $L_2^{(n)}$
      and $L_3^{(n)}$ converge to the limiting sets (not depicted),
      which intersect $\widetilde{F}_m$ at $x_1,x_2$ and $x_3$,
      respectively. The blue triangle is the sought face of
      $\conv(\sL^{(n)})$ which is obtained as a convex hull of
      appropriate points from $L_1^{(n)}\cap B_{\eps}(x_1)$,
      $L_2^{(n)}\cap B_{\eps}(x_2)$ and $L_3^{(n)} \cap B_{\eps}(x_3)$
      for a sufficiently small $\eps>0$.}
    \label{fig_1}
  \end{figure}

  Let $A_{\eps}$ be the subset of the unit sphere formed by all
  vectors which are unit normals to the hyperplanes spanned by $d$
  affinely independent points $y_1,\ldots,y_d$ such that
  $y_j\in B_{\eps}(x_j)$, $j=1,\ldots,d$. Note that $A_{\eps}$ shrinks
  to $\{u\}$ as $\eps\downarrow 0$. From assumption (ii) we infer that
  the normal cone $N(L^{(0)}_j,x_j)$ has a non-empty interior for
  $j=1,\ldots,m+1$.  This follows from the fact that
  $N(L^{(0)}_j,x_j)\supseteq N(\conv(\sL^{(0)}),x_j)$ and $x_j$ is a
  vertex of the polytope $\conv(\sL^{(0)})$. The normal cone of the
  fictitious set $\widetilde{L}^{(0)}_j$ at $x_j$, for
  $j=m+2,\ldots,d$, has this property by construction. The reverse
  spherical image $\tau(L,\cdot)$, defined by \eqref{eq:tau_def}, is
  continuous for every compact convex set $L$, see
  \cite[Lemma~2.2.12]{schn2}. Thus, by decreasing the chosen $\eps>0$
  we can ensure that
  \begin{equation}\label{eq:tau_a_eps}
    \tau(\widetilde{L}^{(0)}_j,A_{\eps_1})
    =\{x_j\}\quad\text{for all}\quad j=1,\dots,d
    \quad\text{and}\quad\eps_1\in(0,\eps).
  \end{equation}
  
  Since for all $j=1,\dots,m+1$ the set $L_{j}^{(n)}$ converges to
  $L_{j}^{(0)}$ in the Hausdorff metric as $n\to\infty$, see
  \eqref{eq:proof_continuity11}, and the intersection of sets is upper
  semicontinuous, we conclude that $L_{j}^{(n)}$ hits
  $B_{\eps/2}(x_{j})$ and $L_{j}^{(n)}\cap D_{\eps/2}$ is a subset of
  $(L_j^{(0)}\cap D_{\eps/2})+B_{\eps/2}(0)$ for all $j=1,\dots,m+1$
  and all $n\geq n_0$ for a sufficiently large $n_0\in\NN$. Recall
  that $\widetilde{L}_j^{(n)}$ is set to be equal to
  $\widetilde{L}_j^{(0)}$ for all $j=m+2,\dots,d$, which means that
  the above claims trivially hold for $j=m+2,\dots,d$. Furthermore,
  from the inclusion
  \begin{displaymath}
    \widetilde{L}_{j}^{(n)}\cap D_{\eps/2}
    \subseteq \widetilde{L}_j^{(0)}\cap D_{\eps/2}+B_{\eps/2}(0)
    \subseteq B_{\eps}(x_j),\quad j=1,\ldots,d\quad n\geq n_0,
  \end{displaymath}
  and the choice of $\eps>0$ it follows that, for arbitrary
  $y_j\in \widetilde{L}_{j}^{(n)}\cap D_{\eps/2}$, $j=1,\ldots,d$, the
  points $\{y_1,\ldots,y_d\}$ are affinely independent.
  
  Let $\bar{L}^{(n)}$ be the convex hull of
  $\big\{\widetilde{L}_{1}^{(n)}\cap
  D_{\eps/2},\dots,\widetilde{L}_{d}^{(n)}\cap D_{\eps/2}\big\}$, and
  consider the closed segment $[z-\eps u,\,z+\eps u]$. Since the
  projections of $\widetilde{L}_j^{(n)}\cap D_{\eps/2}$ on $H$ are
  subsets of $B_{\eps}(x_j)\cap H$, the projection of $\bar{L}^{(n)}$
  onto $H$ contains $z$. Thus, the segment $[z-\eps u,\,z+\eps u]$
  intersects the boundary of $\bar{L}^{(n)}$, and no point from this
  segment is a convex combination of points from any strict subfamily
  of
  $\big\{\widetilde{L}_{1}^{(n)}\cap D_{\eps/2},\dots,
  \widetilde{L}_{d}^{(n)}\cap D_{\eps/2}\big\}$, since otherwise, $z$
  would have been such a combination. Define $y:=z+t_0u$, where
  $t_0=\sup\big\{a\in\R:\; y+au\in\bar{L}^{(n)}\big\}$.

  Pick a unit vector $v$ from $N(\bar{L}^{(n)},y)$ and note that by
  construction $v\in A_{\eps}$, and, in particular,
  $\tau(\widetilde{L}_j^{(0)},v)={x_j}$ for all $j=1,\ldots,d$ in view
  of \eqref{eq:tau_a_eps}. Clearly, $y\in F(\bar{L}^{(n)},v)$.  Let us
  show that also $y\in F(\bar{\bar{L}}^{(n)},v)$, where
  $\bar{\bar{L}}^{(n)}:=\conv\big(\widetilde{L}_1^{(n)}\cup\cdots\cup
  \widetilde{L}_{d}^{(n)}\big)$. Assume that $y$ does not belong to
  $F(\bar{\bar{L}}^{(n)},v)$.  In this case,
  $F(\bar{\bar{L}}^{(n)},v)$ would not be a subset of $D_{\eps/2}$.
  Since $F(\widetilde{L}_j^{(n)},v)=\widetilde{L}_j^{(n)}\cap H_n$,
  where
  $H_n:=\big\{x\in\R^d:\; \langle
  x,v\rangle=h(\widetilde{L}_j^{(n)},v)\big\}$, and the intersection
  operation is upper semicontinuous, we have that
  \begin{displaymath}
    F(\widetilde{L}_j^{(n)},v)
    \subset F(\widetilde{L}_j^{(0)},v)+B_{\eps/2}(0)
    =\tau(\widetilde{L}_j^{(0)},v)+B_{\eps/2}(0)
    =B_{\eps/2}(x_j)\subseteq D_{\eps/2}.
  \end{displaymath}
  Note that
  \begin{displaymath}
    F(\bar{\bar{L}}^{(n)},v)\subset\conv\bigg(\bigcup_{j=1}^d
      F(\widetilde{L}_j^{(n)},v)\bigg),
  \end{displaymath}
  and, therefore, $F(\bar{\bar{L}}^{(n)},v)\subset D_{\eps/2}$, which
  is a contradiction.  The last display also implies that
  $y\in F(\bar{\bar{L}}^{(n)},v)$ is a convex combination
  $\sum_{j=1}^dc_j y_j$ of points $y_j\in F(\widetilde{L}^{(n)}_j,v)$,
  $j=1,\dots,d$. Since $F(\widetilde{L}^{(n)}_j,v)\subset D_{\eps/2}$,
  we have also that $y_j\in \widetilde{L}^{(n)}_j\cap
  D_{\eps/2}$. Hence, the weights $c_1,\dots,c_d$ are
  strictly positive, because $y$ does not belong to the convex hull of
  any strict subfamily of
  $\big\{\widetilde{L}_{1}^{(n)}\cap
  D_{\eps/2},\dots,\widetilde{L}_{d}^{(n)}\cap D_{\eps/2}\big\}$.
  Since $y$ belongs to the support set $F(\bar{\bar{L}}^{(n)},v)$, we
  have $h(\bar{\bar{L}}^{(n)},v)=\langle y,v\rangle$ and, therefore,
  using that the support function of the convex hull is equal to the
  maximum of support functions of the involved sets,
  \begin{displaymath}
    \max_{j=1,\dots,d} h(\widetilde{L}_j^{(n)},v)
    =h(\bar{\bar{L}}^{(n)},v)=\langle y,v\rangle
    =\sum_{j=1}^{d} c_j \langle y_j,v\rangle
    \leq \sum_{j=1}^{d} c_j  h(\widetilde{L}_j^{(n)},v).
  \end{displaymath}
  This is only possible if
  \begin{displaymath}
    h(\bar{\bar{L}}^{(n)},v)=h(\widetilde{L}_{1}^{(n)},v)
    =\cdots=h(\widetilde{L}_{d}^{(n)},v).
  \end{displaymath}
  Thus, the hyperplane
  $\widetilde{H}_n:=\big\{x\in\R^d:\; \langle
  x,v\rangle=h(\tilde{L}^{(n)},v)\big\}$ intersects
  $L_{1}^{(n)},\dots,L_{m+1}^{(n)}$ at their support sets in direction
  $v$. Put
  \begin{displaymath}
    F_{m}^{(n)}:=\conv\big\{L_1^{(n)}\cap \widetilde{H}_n,
    \ldots,L_{m+1}^{(n)}\cap \widetilde{H}_n\big\},
  \end{displaymath}
  and note that the sets on the right-hand side are affinely
  independent singletons. Since all other sets in
  $\{L_{m+2}^{(n)},\ldots,L_{l}^{(n)}\}$ lie in the open half-space
  $\widetilde{H}_n^{-}$ for all sufficiently large $n\in\NN$, we
  conclude that $F_{m}^{(n)}$ is an $m$-dimensional face of
  $\conv(\sL^{(n)})$.

  Summarising, we have shown the existence of an $m$-dimensional face
  $F_m^{(n)}$ of $\conv(\sL^{(n)})$ which intersects $B_\eps(x_{j})$,
  $j=1,\dots,m+1$, and so these faces are different for different
  faces $F_m$ of $\conv(\sL^{(0)})$.  This finishes the proof of
  \eqref{eq:liminf_cont_lemma_proof}.

  \vspace{2mm}	
  \noindent
  {\sc Proof of part \textsc{(c)}}. Note that $\fv_{d-1}(\sL^{(n)})$
  is not larger than the number of $(d-1)$-dimensional faces of
  $\conv(\sL^{(n)})$.  Consider a $d$-tuple of sets
  $\{L^{(n)}_{i_1},\dots,L^{(n)}_{i_d}\}$ which contributes to
  $\fv_{d-1}(\sL^{(n)})$ and so generates at least one
  $(d-1)$-dimensional face $F_{d-1}^{(n)}$ of $\conv(\sL^{(n)})$. If
  it generates another $(d-1)$-dimensional face $\bar{F}^{(n)}_{d-1}$
  for infinitely many $n\in\NN$, then, arguing as in the proof of
  \eqref{eq:cont_lemma_proof_limsup} above, we conclude that there is
  a $d$-tuple of sets $\{L^{(0)}_{i_1},\dots,L^{(0)}_{i_d}\}$ from
  $\sL^{(0)}$ which generates two $(d-1)$-dimensional faces of
  $\conv(\sL^{(0)})$. Since $m=d-1$, the limiting face
  $F\big(\conv(\sL^{(0)}),u^{(0)}\big)$, constructed in the proof
  \eqref{eq:cont_lemma_proof_limsup} as the limit of
  $(d-1)$-dimensional faces of $\conv(\sL^{(n)})$, is of dimension
  $d-1$. This contradicts condition (ii) imposed on $\sL^{(0)}$ unless
  the limits of $F_{d-1}^{(n)}$ and $\bar{F}^{(n)}_{d-1}$ (in the
  Hausdorff metric) are identical and constitute a $(d-1)$-dimensional
  face $F_{d-1}^{(0)}$ of $\conv(\sL^{(0)})$. Each $(d-1)$-dimensional
  face is an exposed one, and so the faces $F_{d-1}^{(n)}$ and
  $\bar{F}^{(n)}_{d-1}$ arise as intersections of $\conv(\sL^{(n)})$
  with two hyperplanes $H^{(n)}$ and $\bar{H}^{(n)}$,
  respectively. Denote by $u^{(n)}$ and $\bar{u}^{(n)}$ the unit
  normals to the faces $F_{d-1}^{(n)}$ and $\bar{F}^{(n)}_{d-1}$,
  respectively. Then
  \begin{displaymath}
    F_{d-1}^{(n)}\subset \{x\in H^{(n)}:\langle x,\bar{u}^{(n)}\rangle
    \leq h(\bar{F}^{(n)}_{d-1},\bar{u}^{(n)})\}=:G^{(n)},\quad
    \bar{F}_{d-1}^{(n)}\subset \{x\in \bar{H}^{(n)}:\langle x,u^{(n)}\rangle
    \leq h(F^{(n)}_{d-1},u^{(n)})\}=:\bar{G}^{(n)}.
  \end{displaymath}
  Note that $G^{(n)}$ (respectively, $\bar{G}^{(n)}$) is a subset of
  $H^{(n)}$ (respectively, $\bar{H}^{(n)}$) with the boundary
  $H^{(n)}\cap \bar{H}^{(n)}$. Both $H^{(n)}$ and $\bar{H}^{(n)}$
  converge, as $n\to\infty$, to a $(d-1)$-dimensional hyperplane
  $H^{(0)}$, which is a supporting hyperplane of
  $\conv(\sL^{(0)})$. The intersection of $H^{(n)}$ and
  $\bar{H}^{(n)}$ is a $(d-2)$-dimensional affine subspace which
  converges to a limit $H'$, and so $G^{(n)}$ and $\bar{G}^{(n)}$
  converge to two subsets of $H^{(0)}$ bounded by $H'$. Since
  $F_{d-1}^{(n)}$ and $\bar{F}^{(n)}_{d-1}$ have an identical limit as
  $n\to\infty$ and these faces have disjoint relative interiors, the
  limit $F^{(0)}_{d-1}$ is a subset of $H'$. This is a contradiction,
  since the dimension of $H'$ is equal to $d-2$.
\end{proof}

Without assuming strict convexity of sets from $\sL^{(n)}$, the
conclusion of Lemma~\ref{lemma:conv-tuples} is wrong. For instance,
this is the case if $L_1^{(n)}$ and $L_2^{(n)}$ are collinear segments
converging to two singletons $L_1$ and $L_2$. The limiting collection
is in general position, which is not the case for
$\big\{L_1^{(n)},L_2^{(n)}\big\}$.

\begin{proof}[Proof of Theorem~\ref{thm:f_vector_convergence}]
  We shall use the Skorokhod representation theorem, see
  \cite[Theorem~4.30]{Kallenberg:2002} in conjunction with
  Theorem~\ref{th:convergence-pp} and
  Lemma~\ref{lemma:conv-tuples}. First, we can use the Skorokhod
  representation theorem to pass to a new probability space such that
  convergence in Theorem~\ref{th:convergence-pp} holds almost
  surely. On this new probability space with probability one all the
  assumptions of Lemma~\ref{lemma:conv-tuples} hold for the point
  processes $L_i^{(n)}:=n^{-1}(K-\xi_i)^{o}$, $i=1,\ldots,n$,
  $n\in\NN$, with the limit, as $n\to\infty$, given by the point
  process $\sL^{(0)}$ composed of $L_i^{(0)}:=[0,x_i]$, $x_i\in\Pi_K$,
  where for simplicity we kept the original notation for the objects
  on the new probability space. Thus, on this new probability space
  there exists a (random) $n_0\in\NN$ such that
  $\Fv(\sL_{\Xi_n})=\Fv(Z^{o})$ for all $n\geq n_0$ with probability
  one. Going back to the original probability space, we get the
  required convergence in distribution. 
\end{proof}

\subsection{Proof of Theorem~\ref{thm:f_vector_convergence_moments}}
\label{sec:proof-theor-refthm:f}

We exploit the same approach as in the proof of Theorem 2.4 in
\cite{kab:mar:tem:19}. In view of
Theorem~\ref{thm:f_vector_convergence}, it suffices to check the
uniform integrability, which is equivalent to
\begin{displaymath}
  \sup_{n\in\NN}\, \E \fv_k^m(Q_n)<\infty
\end{displaymath}
for all $k=0,\ldots,d-1$ and $m\in\NN$. By
Corollary~\ref{cor:combinatorial}, the latter is equivalent to
\begin{equation}\label{eq:moments_proof1}
  \sup_{n\in\NN}\,\E \fv_0^m(Q_n)<\infty
\end{equation}
for all $m\in\NN$, since $\binom{n}{k}\leq n^k$, $k=0,\ldots,d-1$.

By Proposition~\ref{prop:def_gp_strictly_convex}(ii),
\begin{align*}
  \fv_0(Q_n)=\fv_0\big(\conv(\sL_{\Xi_n})\big)
  &=\sum_{i=1}^{n}\one_{\{(K-\xi_i)^{o}\text{ is a vertex of the family } \sL_{\Xi_n}\}}\\
  &\leq \sum_{i=1}^{n}\one_{\{(K-\xi_i)^{o}\text{ does not lie in the convex hull of }(K-\xi_j)^{o},\;j=1,\ldots,n,\; i\neq j\}}.
\end{align*}
Let $(\eta_n)_{n\in\NN}$ be a sequence of independent copies of $\xi$
which is also independent of $(\xi_n)_{n\in\NN}$. Raising both sides
of the last display to the power $m$ and taking expectations we see
that \eqref{eq:moments_proof1} follows, once we check that
\begin{displaymath}
  p_n:=n^{m}\P\bigg\{\text{for all }j=1,\ldots,m,\;(K-\eta_j)^o
    \not\subset\conv\Big(\bigcup_{i=1}^n(K-\xi_i)^o\Big)\bigg\}
  =n^m\P\bigg(\bigcap_{j=1}^m\{K-\eta_j
    \not\supseteq X_n\}\bigg)=\mathcal{O}(1),
\end{displaymath}
as $n\to\infty$, for every fixed $m\in\NN$, where the constant in the
Landau symbol may depend on $m$.

Put
\begin{displaymath}
  \chi_n:=\inf\{t\geq 0:\; tK\supseteq X_n\},
\end{displaymath}
and note that $\chi_n\in(0,1]$. Using this variable we can bound
$p_n$ as follows:
\begin{align}
  p_n&\leq n^m \E\left[\Prob{K-\eta_j\not\supseteq
       \chi_nK\text{ for all }j=1,\ldots,m \Big| X_n}\right]\notag\\
     &=n^m \E\bigg(1-\frac{V_d(K\ominus\chi_n K)}{V_d(K)}\bigg)^m
       =n^m \E\Big[\big(1-(1-\chi_n)^d\big)^m\Big]\notag\\
     &\leq d^m n^m \E \chi_n^m=d^m n^m  \int_0^1\Prob{t^{1/m}K\not\supset X_n}{\rm d}t \notag\\
     &=d^m n^m \int_0^1\Prob{K^o\not\subseteq t^{1/m}X^o_n}{\rm d}t=d^m n^m \int_0^1\Prob{K^o\not\subset tX^o_n} mt^{m-1} {\rm d}t.\label{eq:moments_proof2}
\end{align}

We shall now derive an appropriate upper bound for
$\Prob{K^o\not\subset t X_n^o}$, which is uniform in $t\in(0,\,1]$. To
this end, we recall some concepts from convex geometry. The
exoskeleton of $K$ is the set $\exo(K)$ of points $x\in\Int K$ such
that $x$ does not have a unique nearest point from $\partial K$. Note
that $\exo(K)$ has vanishing $d$-dimensional Lebesgue measure, see,
for example, \cite[p.~106]{kid:rat06}. For all
$x\in\Int K\setminus\exo(K)$, define the projection map
$p(K,x)\label{eq:proj_map_def}$, which associates with $x$ the closest
point from $\partial K$. Write $u(K,x)$ for the unit vector
$(p(K,x)-x)/\rho(\partial K,x)$, where
$\rho(\partial K,x)=\|p(K,x)-x\|$ denotes the distance from $x$ to the
set $\partial K$. Unlike \cite[Chapter~4]{schn2}, where these concepts
are used for $x$ outside $K$, we employ them for $x$ from the interior
of $K$.

Consider a supporting hyperplane $H(K,p(K,x))$. It is apparent that
this is also the supporting hyperplane to the ball
$B_{\rho(\partial K,x)}(x)$ touching the boundary of $K$ at
$p(K,x)$. Thus, $u(K,x)$ belongs to the normal cone $N(K,p(K,x))$ and
$p(K,x)$ belongs to the support set $F(K,u(K,x))$.

For a set $R\subset\Sphere$ and $t\geq 0$, put
\begin{displaymath}
  T_K(R,t):=\big\{x\in\Int K\setminus\exo(K):\;
  p(K,x)\in \tau(K,R),\; \rho(\partial K,x)\leq t\big\},
\end{displaymath}
where $\tau(K,R)$ is the reverse spherical image of a set $R$ defined
at \eqref{eq:tau_def}.

By \cite[Theorem~1]{kid:rat06} applied with $C=\tau(K,R)$, $A=K$,
$P=B=W=\{0\}$, $Q=B_1(0)$ and $\eps=t$, we have that
\begin{equation}
  \label{eq:moments_proof3}
  \lim_{t\to0}\; t^{-1} V_d\big(T_K(R,t)\big)=S_{d-1}(K,R).
\end{equation}
Further, for $R\subseteq \Sphere$ and $s\geq 0$, denote
\begin{displaymath}
  \hat{R}(s):=\big\{x\in\R^d:\; x/\|x\|\in R,\, \|x\|\geq s\big\}.
\end{displaymath} 

From Lemma~\ref{lem:disjoint_r} presented after this proof we see
that there exist $M\in\NN$, $\eps>0$ and a finite disjoint family
$R_1,\dots,R_M\subseteq \Sphere$ such that
\begin{enumerate}[(i)]
\item for all $j=1,\ldots,M$ we have $S_{d-1}(K,R_j)>0$;
\item $B_{\eps}(0)\subset \conv\{y'_1,\ldots,y'_M\}$ for arbitrary
  $y'_j\in R_j$, $j=1,\ldots,M$.
\end{enumerate}
If $y_j\in \hat{R}_j(1)$, then $y'_j:=y_j/\|y_j\|\in R_j$,
for $j=1,\ldots,M$, so that 
\begin{displaymath}
  \conv\{y_1,\ldots,y_M\}\supseteq\conv\{y'_1,\ldots,y'_M,0\}
  =\conv\{y'_1,\ldots,y'_M\}\supseteq B_{\eps}(0).
\end{displaymath}

For $j=1,\ldots,n$, put 
\begin{displaymath}
  \zeta_j:=u(K,\xi_j)/\rho(\partial K,\xi_j),
\end{displaymath}
and note that
\begin{displaymath}
  (K-\xi_1)^o\supseteq [0,\zeta_1].
\end{displaymath}
Indeed, $(K-\xi_1)^o\supseteq [0,\,\zeta_1]$ if and only if
$K-\xi\subset [0,\,\zeta_1]^{o}$, and $[0,\,\zeta_1]^{o}$ is a
half-space $H^{-}_{u(K,\xi_1)}\big(\rho(\partial K,\xi_1)\big)$, which, by
definition of $u(K,\xi_1)$ and $\rho(\partial K,\xi_1)$, contains
$K-\xi_1$. Further, note that with probability one $\xi_1\in T_K(R,t)$
if and only if $\zeta_1/\|\zeta_1\|\in R$ and
$\|\zeta_1\|\geq t^{-1}$, that is, $\zeta_1\in \hat{R}(t^{-1})$. Here
we have used that $p(K,\xi_1)\in\tau(K,R)$ if and only if
$u(K,\xi_1)\in R$.

We are now in position to derive a uniform upper bound on 
$\Prob{K^o\not\subset t X_n^o}$. Pick $a>0$ so large that
$K^o\subset B_a(0)$. Choose $R_1,\ldots,R_M$ and $\eps>0$ satisfying
(i) and (ii) above. By construction, if
$\{\zeta_1,\dots,\zeta_n\}\cap
\hat{R}_j(t^{-1}\eps^{-1}a)\neq\varnothing$ for all $j=1,\dots,M$,
then
\begin{displaymath}
  B_{a}(0)\subset t\conv\{\zeta_1,\dots,\zeta_n\}.
\end{displaymath}
Since
$$
\conv\{\zeta_1,\dots,\zeta_n\}\subseteq X_n^{o},
$$
we obtain
\begin{multline*}
  \P\big\{K^o\not\subset tX_n^o\big\}
  \leq \P\big\{B_{a}(0)\not\subset t\conv\{\zeta_1,\dots,\zeta_n\}\big\}
  \leq \sum_{j=1}^M\Prob{\{\zeta_1,\dots,\zeta_n\}\cap \hat{R}_j(t^{-1}\eps^{-1}a)=\emptyset}\\
  = \sum_{j=1}^M \Big(1-\P\big\{\zeta_1\in\hat{R}_j(t^{-1}\eps^{-1}a)\big\}\Big)^n
  = \sum_{j=1}^M \Big(1-\P\big\{\xi_1\in T_K(R_j,t\eps a^{-1})\big\}\Big)^n.
\end{multline*}
Using \eqref{eq:moments_proof3} and monotonicity of $V_d(T_K(R,t))$,
whenever $S_{d-1}(K,R)>0$, there exists a constant $c_0=c_0(R)>0$ such
that
\begin{displaymath}
  \frac{V_d\big(T_K(R,t)\big)}{t}\geq c_0,\quad t\in (0,\,\eps a^{-1}].
\end{displaymath}
Therefore, 
\begin{displaymath}
  \P\big\{\xi_1\in T_K(R_j,t\eps a^{-1})\big\}
  =\frac{V_d\big(T_K(R_j,t\eps a^{-1})\big)}{V_d(K)}
  \geq \frac{\min_{j=1,\ldots,M}c_0(R_j)}{V_d(K)}t\eps a^{-1}
  =:c'_0t,\quad t\in (0,\,1],\quad j=1,\ldots,M,
\end{displaymath}
where $c'_0>0$, and, thereupon,
\begin{displaymath}
  \P\big\{K^o\not\subset tX_n^o\big\}\leq M(1-c'_0t)^n,\quad t\in (0,\,1].
\end{displaymath}
From \eqref{eq:moments_proof2} we finally obtain
\begin{align*}
  p_n\leq
  d^m n^m \int_0^1\P\big\{K^o\not\subset tX^o_n\big\} mt^{m-1} {\rm d}t
  &\leq d^m n^m mM \int_0^1 (1-c'_0 t)^n t^{m-1} {\rm d}t\\
  &=d^m mM\int_0^n \Big(1-\frac{c'_0s}{n}\Big)^n s^{m-1}{\rm d}s
  \leq d^m mM\int_0^\infty e^{-c'_0s}s^{m-1}{\rm d}s<\infty
\end{align*}
for all $n\in\NN$. The proof is complete. \qed

\begin{lemma}\label{lem:disjoint_r}
  Let $S_{d-1}(K,\cdot)$ be the surface area measure of a convex body
  $K$. Then there exists a finite family of disjoint Borel sets
  $R_1,\dots,R_M$ on the unit sphere and $\eps>0$, such that
  $S_{d-1}(K,R_j)>0$ for all $j=1,\dots,M$ and, for all points
  $y_j\in R_j$, $j=1,\ldots,M$, the convex hull of $\{y_1,\dots,y_M\}$
  contains the ball $B_\eps(0)$.
\end{lemma}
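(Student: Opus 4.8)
The plan is to reduce the statement to the geometric fact that the support $U:=\mathrm{supp}\,S_{d-1}(K,\cdot)$ of the surface area measure of the convex body $K$ is not contained in any closed hemisphere of $\Sphere$ — precisely the property already invoked in Section~\ref{sec:k-convex-sets} to guarantee boundedness of the zero cell $Z$. First I would upgrade this to the statement that the origin lies in the \emph{interior} of $\conv(U)$. Indeed, if $0\notin\Int\conv(U)$, then by the separation theorem there is a nonzero $w$ with $\langle u,w\rangle\ge 0$ for all $u\in\conv(U)$, and in particular for all $u\in U$; but then $U$ is contained in the closed hemisphere $\{u\in\Sphere:\langle u,w\rangle\ge 0\}$, a contradiction. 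Hence $0\in\Int\conv(U)$.

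Next I would extract a finite generating configuration from $U$. Fix $r>0$ with $B_r(0)\subset\conv(U)$. Each of the $2d$ points $\pm r e_1,\dots,\pm r e_d$ then lies in $\conv(U)$, so by Carath\'eodory's theorem (\cite[Theorem~1.1.4]{schn2}) each is a convex combination of at most $d+1$ points of $U$. Collecting all the points so obtained and discarding repetitions produces finitely many \emph{distinct} $u_1,\dots,u_M\in U$ with $r\,\conv\{\pm e_1,\dots,\pm e_d\}\subseteq\conv\{u_1,\dots,u_M\}$; since the cross-polytope on the left contains a ball centred at the origin, there is $\eps_0>0$ with $B_{\eps_0}(0)\subset\conv\{u_1,\dots,u_M\}$.

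The decisive point is that this inclusion is stable under small perturbations of the generating points, which is what will let me inflate the singletons $u_j$ into sets $R_j$ of positive measure. Put $\eps:=\eps_0/2$ and $\delta:=\eps_0/2$, and observe that for any points $y_1,\dots,y_M$ with $\|y_j-u_j\|\le\delta$ and every unit vector $v$,
\[
h\big(\conv\{y_1,\dots,y_M\},v\big)=\max_{j}\langle y_j,v\rangle
\ge \max_j\langle u_j,v\rangle-\delta
= h\big(\conv\{u_1,\dots,u_M\},v\big)-\delta\ge \eps_0-\delta=\eps,
\]
where the last inequality holds because $B_{\eps_0}(0)\subset\conv\{u_1,\dots,u_M\}$ forces the support function of the latter to be at least $\eps_0$ in every direction. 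Consequently $B_\eps(0)\subseteq\conv\{y_1,\dots,y_M\}$. I would then take $R_j$ to be the open spherical cap $\{u\in\Sphere:\|u-u_j\|<\delta'\}$ with $\delta':=\min\{\delta,\tfrac12\min_{i\ne j}\|u_i-u_j\|\}$: these caps are pairwise disjoint, each is contained in $B_\delta(u_j)$, and each has $S_{d-1}(K,R_j)>0$ since $u_j$ belongs to the support $U$. Any choice $y_j\in R_j$ satisfies $\|y_j-u_j\|<\delta$, so the displayed estimate yields $\conv\{y_1,\dots,y_M\}\supseteq B_\eps(0)$, as required.

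The only genuinely geometric input is the non-containment of $U$ in a closed hemisphere, equivalently $0\in\Int\conv(U)$; everything after that is the routine Carath\'eodory extraction together with the support-function perturbation bound. Accordingly, I expect the main point to verify carefully to be the passage from the \emph{global} information $0\in\Int\conv(U)$ to a \emph{finite} configuration whose hull still contains a neighbourhood of the origin \emph{robustly}, i.e.\ uniformly over small movements of its vertices within the caps $R_j$; the support-function computation above is exactly what makes this robustness quantitative.
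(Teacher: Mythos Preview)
Your proof is correct and follows essentially the same route as the paper: both start from $0\in\Int\conv(\mathrm{supp}\,S_{d-1}(K,\cdot))$, extract a finite subset of the support whose convex hull still contains a ball, and then thicken the points to small disjoint neighbourhoods using a perturbation argument. The only cosmetic differences are that the paper obtains the finite set via polytopal approximation of $\conv(S_K)$ rather than Carath\'eodory applied to the vertices of a cross-polytope, and phrases the perturbation step through the Minkowski difference $\conv\{z_j\}\ominus B_{\delta_0}(0)$ rather than your explicit support-function estimate; these are equivalent formulations of the same inequality.
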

\begin{proof}
  Denote by $S_K$ the support of $S_{d-1}(K,\cdot)$, so that $S_K$ is
  a closed subset of the unit sphere $\Sphere$.  It is well known,
  see, for example, \cite[Section~8.2.1]{schn2}, that the measure
  $S_{d-1}(K,\cdot)$ has its centroid at the origin, that is,
  $\int_{\Sphere} u S_{d-1}(K,{\rm d}u)=0$. Furthermore, $S_K$ is not
  a subset on any great subsphere of $\Sphere$. Hence, $\conv(S_K)$
  contains a ball $B_{3\eps}(0)$ for a sufficiently small $\eps>0$.

  Let $(P_n)_{n\in\NN}$ be a sequence of polytopes with vertices in
  $S_K$ such that $P_n$ converges to $\conv(S_K)$ in the Hausdorff
  metric as $n\to\infty$. Take $n_0\in\NN$ so large that
  $B_{2\eps}(0)\subseteq P_{n_0}$. Let $z_1,\ldots,z_M$ be the
  vertices of $P_{n_0}$, so that
  $P_{n_0}=\conv\{z_1,\ldots,z_M\}$. Pick $\delta>0$ such that the
  balls $B_{\delta}(z_j)$ are disjoint for $j=1,\ldots,M$ and put
  $R_j(\delta):=B_{\delta}(z_j)\cap S_K$.  Since
  $\conv\{z_1,\ldots,z_M\}\ominus B_{\delta}(0)$ converges to
  $P_{n_0}$ in the Hausdorff metric as $\delta\downarrow 0$, it is
  clear that we can further choose $\delta_0>0$ so small that
  $B_{\eps}(0)\subseteq \conv\{z_1,\ldots,z_M\}\ominus
  B_{\delta_0}(0)$. Thus, for an arbitrary choice of
  $y_j\in R_j(\delta_0)=:R_j$, $j=1,\ldots,M$, we have
  $B_{\eps}(0)\subseteq \conv\{y_1,\ldots,y_M\}$. Since $R_j$ is a
  relative neighbourhood of a point $z_j\in S_K$, we have
  $S_{d-1}(K,R_j)>0$ for all $j=1,\ldots,M$.
\end{proof}

\subsection{Limit theorems for the number of \texorpdfstring{$K$}{K}-facets}

In this subsection additionally to strict convexity and regularity we
also assume that $K$ is a generating set. The latter is needed to
ensure applicability of Lemmas~\ref{lemma:facets} and \ref{lemma:fd}.
  
Recall that, in general, $\fv_{d-1}(Q_n)$ can be strictly smaller than
the number of $K$-facets of $Q_n$, see
Example~\ref{ex:two-points-2}. Still, for the limiting polytope $Z^o$
in Theorem~\ref{thm:f_vector_convergence}, the number of facets
$f_{d-1}(Z^o)$ coincides with the $(d-1)$-st component of the
$\Fv$-vector for the family of segments $\{[0,x]:x\in\Pi_K\}$. By
Lemma~\ref{lemma:facets}, the number of $K$-facets of $Q_n$ coincides
with the number of $(d-1)$-dimensional faces of
$\conv(\sL_{\Xi_n})$. Further, by Lemma~\ref{lemma:conv-tuples}(\textsc{c}) the
latter is equal to $\fv_{d-1}(\sL_{\Xi_n})$ for all $n\geq n_0$, where
$n_0\in\NN$ is random. Therefore, the number of $K$-facets of $Q_n$
converges in distribution to $f_{d-1}(Z^{o})$ as $n\to\infty$.
  
In order to ensure the uniform integrability of the number of
$K$-facets, we impose the following property on $K$.  A strictly
convex body $K\in\sK^d$ is said to satisfy a \emph{finite boundary
  intersection property} if there is a finite number $C_K$ such that
the cardinality of the intersection of
$\partial K+x_1,\dots,\partial K+x_d$ is at most $C_K$ for Lebesgue
almost all $x_1,\dots,x_d\in\R^d$. This property can be equivalently
formulated as the fact that for Lebesgue almost all sets
$\{x_1,\dots,x_d\}\subset\R^d$, there are at most $C_K$ different
translations of $K$ which have these points on the boundary. It is
easy to see that Euclidean balls and ellipsoids have this property
with $C_K=2$. The same is the case for all strictly convex bodies in
the plane, see \cite{good:wood81}.  However, it is possible to
construct examples of bodies which do not have a finite intersection
property. We conjecture, however, that all origin symmetric strictly
convex bodies have a finite boundary intersection property.

The finite boundary intersection property makes it possible to bound
the number of $K$-facets of $Q_n$ in terms of the relevant component
of the $\Fv$-vector. Summarising, we obtain the following corollary.
  
\begin{corollary}
  \label{cor:K-facets-limit}
  Assume that $K\in\sK^d_{(0)}$ is strictly convex, regular and is
  also a generating set. Then the number of $K$-facets of $Q_n$
  converges in distribution to $f_{d-1}(Z^o)$ as $n\to\infty$. If $K$
  satisfies a finite boundary intersection property, then all power
  moments of the number of $K$-facets of $Q_n$ converge to the
  corresponding moments of $f_{d-1}(Z^o)$ as $n\to\infty$. In
  particular, the expected number of $K$-facets of $Q_n$ converges, as
  $n\to\infty$, to the constant given at the right-hand side of
  \eqref{eq:f_0_Z_gen2}.  If $K$ is also origin symmetric, this
  constant simplifies to $2^{-d}d! V_d(\piv K)V_d((\piv K)^o)$, where
  $\piv K$ is the projection body of $K$.
\end{corollary}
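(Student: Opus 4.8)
The plan is to establish the two assertions separately, building on Theorems~\ref{thm:f_vector_convergence} and \ref{thm:f_vector_convergence_moments} and on Lemmas~\ref{lemma:facets} and \ref{lemma:conv-tuples}, and only afterwards to identify the limiting constant. For the convergence in distribution I would merely assemble the chain of identities already recorded in the discussion preceding the corollary. Since $K$ is strictly convex, regular and a generating set, and $\sL_{\Xi_n}$ is in general position with probability one, Lemma~\ref{lemma:facets} identifies the number of $K$-facets of $Q_n$ with the number of $(d-1)$-dimensional faces of $\conv(\sL_{\Xi_n})$. Passing to the Skorokhod coupling used in the proof of Theorem~\ref{thm:f_vector_convergence}, Lemma~\ref{lemma:conv-tuples}(\textsc{c}) shows that this count equals $\fv_{d-1}(\sL_{\Xi_n})$ for all $n$ beyond a random threshold $n_0$, while $\fv_{d-1}(\sL_{\Xi_n})$ itself stabilises at $f_{d-1}(Z^o)$. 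Hence the number of $K$-facets converges to $f_{d-1}(Z^o)$ almost surely on the coupling space, and therefore in distribution on the original one.

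The substantive part is the convergence of the moments, and the main obstacle is securing a bound on the number of $K$-facets valid for \emph{every} $n$, since the identification above holds only past the random index $n_0$. This is precisely where the finite boundary intersection property enters. Each $(d-1)$-dimensional face $F$ of $\conv(\sL_{\Xi_n})$ is hit, by general position, by exactly $d$ sets $(K-\xi_{i_1})^o,\dots,(K-\xi_{i_d})^o$, and its conjugate face is a singleton whose associated translate of $K$ carries $\xi_{i_1},\dots,\xi_{i_d}$ on its boundary (cf.\ the proof of Lemma~\ref{lemma:fd}). Distinct faces sharing the same unordered $d$-tuple $\{\xi_{i_1},\dots,\xi_{i_d}\}$ thus correspond to distinct translates of $K$ carrying these $d$ points on their boundaries.

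Since $\Xi_n$ consists of i.i.d.\ absolutely continuous points, each of the finitely many $d$-tuples drawn from $\Xi_n$ almost surely avoids the Lebesgue-null exceptional set of the finite boundary intersection property; consequently each such $d$-tuple is shared by at most $C_K$ translates, and summing over the $\fv_{d-1}(\sL_{\Xi_n})$ distinct $d$-tuples gives, almost surely and for every $n$,
\begin{displaymath}
  \#\{K\text{-facets of }Q_n\}
  =\#\{(d-1)\text{-dimensional faces of }\conv(\sL_{\Xi_n})\}
  \leq C_K\,\fv_{d-1}(Q_n).
\end{displaymath}
I expect the careful justification of this inequality — in particular the almost sure avoidance of the exceptional configurations by the random sample and the face-to-translate bookkeeping — to be the delicate point; the rest is routine.

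With this inequality in hand, raising it to the $m$-th power and taking expectations yields, via Theorem~\ref{thm:f_vector_convergence_moments}, the bound $\sup_{n}\E(\#\{K\text{-facets of }Q_n\})^{m}\leq C_K^{m}\sup_n\E\fv_{d-1}^{m}(Q_n)<\infty$ for every $m\in\NN$. Uniform boundedness of the $(m+1)$-st moments forces uniform integrability of the $m$-th powers, which, combined with the convergence in distribution established above, delivers the convergence of all power moments. Finally, to identify the limiting expectation I would invoke the reversal relation $f_{d-1}(Z^o)=f_0(Z)$ together with formula \eqref{eq:f_0_Z_gen2} for $\E f_0(Z)$ proved in Theorem~\ref{thm:zero_cell_vertexes}; in the origin symmetric case the simplification to $2^{-d}d!\,V_d(\piv K)V_d((\piv K)^o)$ then follows from \eqref{eq:f_0_Z_gen1} and the identity $\tfrac1d\int_{\Sphere}(h(\piv K,x))^{-d}\,{\rm d}x=V_d((\piv K)^o)$ recorded just above the corollary.
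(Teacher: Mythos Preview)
Your proposal is correct and follows essentially the same route as the paper: the convergence in distribution is assembled from Lemma~\ref{lemma:facets} and Lemma~\ref{lemma:conv-tuples}(\textsc{c}) via the Skorokhod coupling, and the moment convergence rests on the bound $\#\{K\text{-facets of }Q_n\}\le C_K\,\fv_{d-1}(Q_n)$ obtained from the finite boundary intersection property, combined with the uniform integrability from Theorem~\ref{thm:f_vector_convergence_moments}. Your write-up is in fact slightly more careful than the paper's in making explicit that the random $d$-tuples almost surely avoid the exceptional Lebesgue-null set and in spelling out the face-to-translate correspondence, but the underlying argument is the same.
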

\begin{proof}
  The stated convergence in distribution has been already explained
  above. For the convergence of moments we argue as follows. If $K$
  satisfies a finite boundary intersection property, then, following
  the proof of Lemma~\ref{lemma:fd}, we see that each $d$-tuple of
  sets from $\sL_{\Xi_n}$ intersects at most $C_K$ of
  $(d-1)$-dimensional faces of $\conv(\sL_{\Xi_n})$. Hence, the number
  of $K$-facets of $Q_n$ is at most $C_K\fv_{d-1}(Q_n)$. The
  convergence of all moments follows now from the uniform
  integrability of $(\fv_{d-1}^m(Q_n))_{n\in\NN}$ for all $m\in\NN$.
\end{proof}

\subsection{Application to ball convex sets}
\label{sec:appl-ball-con}

Assume that $K$ is the unit Euclidean ball $B_1(0)$. In this case, the
limit of $nX_n$ is the zero cell $Z$ of a stationary isotropic Poisson
hyperplane tessellation. The Poisson process $\Pi_{B_1(0)}$ has
intensity measure with density proportional to $\|x\|^{-(d+1)}$,
$x\in\R^d\setminus\{0\}$, and its convex hull $Z^o$ is the polar set
to $Z$. 

In the isotropic case, the constants $\E f_k(Z)$ have been calculated
for $k=0$ and $k=d-1$ in \cite{kab:mar:tem:19}, see Theorem~2.4 and
Remark~2.5 therein; and for arbitrary $k$ in \cite{kab:20}, see
Theorem~2.1 therein. The next result follows from
Theorems~\ref{thm:f_vector_convergence}
and~\ref{thm:f_vector_convergence_moments} together with
Corollary~\ref{cor:K-facets-limit}.

\begin{corollary}
  Assume that $K$ is a unit ball in $\R^d$.  Then
  \begin{equation}
    \label{eq:mom_convergence_isotropic}
    \Fv(Q_n)=\Fv(\sL_{\Xi_n})\dodn {\bf f}\big(\conv(\Pi_{B_1(0)})\big)
    \quad \text{as }\; n\to\infty, 
  \end{equation}
  and also the number of $K$-facets of $Q_n$ converges in distribution
  to $f_{d-1}\big(\conv(\Pi_{B_1(0)})\big)$.  We also have the convergence of
  power moments of all orders. In particular, the average number of
  $K$-facets of $Q_n$ converges, as $n\to\infty$, to
  \begin{equation}
    \label{eq:fodors_result}
    \E f_{d-1}\big(\conv(\Pi_{B_1(0)})\big)=2^{-d}d! \kappa_d^2,
  \end{equation}
  where $\kappa_d=\pi^{d/2}/\Gamma(1+d/2)$ is the volume of the
  $d$-dimensional unit ball.
\end{corollary}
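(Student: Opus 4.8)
The plan is to observe that the unit ball $K=B_1(0)$ satisfies every standing hypothesis of the preceding results, so that the corollary follows by specialising them. First I would record that $B_1(0)$ is strictly convex (its boundary, a sphere, contains no proper segment), regular (its boundary is $C^1$, equivalently each normal cone is one-dimensional), and a generating set, as recalled in Section~\ref{sec:ball-convexity-with}. Consequently $K\in\sK^d_{(0)}$ meets the assumptions of Theorems~\ref{thm:f_vector_convergence} and~\ref{thm:f_vector_convergence_moments} and of Corollary~\ref{cor:K-facets-limit}, and no new machinery is required.

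For the distributional statement I would invoke Theorem~\ref{thm:f_vector_convergence}, which gives $\Fv(Q_n)\dodn\mathbf{f}(Z^o)$, and then identify the limit. As explained before Theorem~\ref{thr:chull}, the polar $Z^o$ of the zero cell equals $\conv(\Pi_K)$, hence here $Z^o=\conv(\Pi_{B_1(0)})$, yielding \eqref{eq:mom_convergence_isotropic}. The stated description of $\Pi_{B_1(0)}$ follows from the mapping theorem: since $B_1(0)$ is regular, $S_{d-1}(B_1(0),\cdot)$ is the uniform measure on $\Sphere$, and pushing $V_d(K)^{-1}V_1\otimes S_{d-1}(K,\cdot)$ forward through $(t,u)\mapsto t^{-1}u$ gives an intensity with density proportional to $\|x\|^{-(d+1)}$. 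Convergence of all power moments of $\Fv(Q_n)$ is then exactly Theorem~\ref{thm:f_vector_convergence_moments}.

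For the number of $K$-facets I would apply Corollary~\ref{cor:K-facets-limit}. Since $B_1(0)$ is a generating set, the number of $K$-facets of $Q_n$ converges in distribution to $f_{d-1}(Z^o)=f_{d-1}(\conv(\Pi_{B_1(0)}))$. To upgrade this to moment convergence I would verify the finite boundary intersection property: for almost every $d$-tuple of points, the centres $x$ placing all points on $\partial(B_1(0)+x)$ solve $\|p_i-x\|=1$, $i=1,\dots,d$; subtracting equations reduces this to an affine line intersected with one sphere, whose solution set has at most two elements, so $C_{B_1(0)}=2$. Corollary~\ref{cor:K-facets-limit} then delivers convergence of all moments of the number of $K$-facets.

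It remains to evaluate the limiting mean \eqref{eq:fodors_result}. Because $B_1(0)$ is origin symmetric, Corollary~\ref{cor:K-facets-limit} gives the limit $2^{-d}d!\,V_d(\piv K)V_d((\piv K)^o)$, and the only computation needed is the projection body. Every orthogonal projection of $B_1(0)$ onto a hyperplane is a $(d-1)$-dimensional unit ball of volume $\kappa_{d-1}$, so $h(\piv K,u)=\kappa_{d-1}$ for all $u\in\Sphere$, whence $\piv K=\kappa_{d-1}B_1(0)$ and $(\piv K)^o=\kappa_{d-1}^{-1}B_1(0)$. Therefore
\begin{displaymath}
  V_d(\piv K)\,V_d((\piv K)^o)
  =\kappa_{d-1}^{d}\kappa_d\cdot\kappa_{d-1}^{-d}\kappa_d=\kappa_d^2,
\end{displaymath}
and substitution into the symmetric formula produces $2^{-d}d!\,\kappa_d^2$, establishing \eqref{eq:fodors_result}. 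The argument is essentially bookkeeping; the only genuinely computational point, and the mildest obstacle, is recognising $\piv B_1(0)$ as a ball, after which the volume product collapses by polarity.
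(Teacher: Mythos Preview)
Your proposal is correct and follows essentially the same route as the paper, which simply states that the corollary follows from Theorems~\ref{thm:f_vector_convergence} and~\ref{thm:f_vector_convergence_moments} together with Corollary~\ref{cor:K-facets-limit}. You have supplied the verifications the paper leaves implicit (strict convexity, regularity, generating set, finite boundary intersection property) and computed the constant via the projection body $\piv B_1(0)=\kappa_{d-1}B_1(0)$, whereas the paper alternatively points to \cite{kab:mar:tem:19,kab:20} for the isotropic value; both routes yield $2^{-d}d!\kappa_d^2$.
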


The convergence of the expected number of $K$-facets to a constant
given by \eqref{eq:fodors_result} has been proved for $d=2$ in
\cite{fod:kev:vig14}\footnote{Actually, the result has been proved for
  the average number of vertices but it is easy to see that for $d=2$
  the number of vertices and edges ($K$-facets) are the same, see
  p.~903 in \cite{fod:kev:vig14}.}. If $d=2$ the limiting constant is
$\pi^2/2$.

In our work, the limiting constant $2^{-d} d!\kappa_d^2$ in
\eqref{eq:fodors_result} appears in a somehow implicit way as a
consequence of \eqref{eq:f_vec_convergence} and the uniform
integrability.  It would be nice to have \eqref{eq:fodors_result}
confirmed using direct calculations as has been done in
\cite{fod:kev:vig14} in two dimensions. An attempt towards this goal
has been made in the preprint \cite{fod19}, which, however, seems to
remain incomplete up to date. However, the priority in discovering the
correct constant in \eqref{eq:fodors_result} should be given to
\cite{fod19}, where this constant appears in its first version.

\section{Appendix}
\label{sec:appendix}

\subsection{Some properties of random samples from a convex
  body}

The aim of this part is to show that the family $\sL_{\Xi_n}$ is in
general position with probability one.

\begin{lemma}\label{lem:gen_pos}
  Assume that a convex body $K\in\sK^{d}$ is strictly convex and
  regular.  Let $\xi_1,\xi_2,\ldots,\xi_{d+1}$ be independent copies
  of a random variable $\xi$ with the uniform distribution on
  $K$. Then
  \begin{displaymath}
    \Prob{\text{there exists }x\in\R^d\text{ such that
      }\; \{\xi_1,\xi_2,\ldots,\xi_{d+1}\}\subset (\partial K - x)}=0.
  \end{displaymath}
  Furthermore, if $1\leq m\leq d$ and $\eta$ is a random vector in
  $\R^d$ such that $\{\xi_1,\ldots,\xi_m\}\subset \partial K-\eta$
  a.s., then
  \begin{displaymath}
    \P\Big\{\text{the one-dimensional normal cones } N(K-\eta,\xi_i),
      \;i=1,\ldots,m, \text{ are linearly independent}\Big\}=1.
  \end{displaymath}
\end{lemma}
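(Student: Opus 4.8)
The plan is to prove both statements by the standard device that a Borel set which is the image of a set of Hausdorff dimension $<N$ under a locally Lipschitz map into $\R^N$ is Lebesgue-null; since $(\xi_1,\dots,\xi_{d+1})$ has a law absolutely continuous with respect to Lebesgue measure on $(\R^d)^{d+1}$, any such null ``bad set'' automatically has probability zero. For the first assertion I would use that regularity of $K$ means $\partial K$ is a $C^1$ hypersurface (Theorem~2.2.4 in \cite{schn2}), hence a $(d-1)$-dimensional $C^1$ manifold. The event in question says that $(\xi_1,\dots,\xi_{d+1})$ lies in the image of the map
\[
\Psi\colon(\partial K)^{d+1}\times\R^d\to(\R^d)^{d+1},\qquad
(y_1,\dots,y_{d+1},x)\mapsto(y_1-x,\dots,y_{d+1}-x),
\]
because $\xi_i\in\partial K-x$ is the same as $\xi_i=y_i-x$ with $y_i:=\xi_i+x\in\partial K$. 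The domain is a $C^1$ manifold of dimension $(d+1)(d-1)+d=d^2+d-1<d^2+d$, and $\Psi$ is $C^1$ (affine in $x$), so its image is Lebesgue-null and the first probability vanishes. This is the exact analogue of the elementary fact that $d+1$ uniform points lie on a hyperplane with probability zero, the $d$-parameter family of hyperplanes being replaced by the $d$-parameter family $\{\partial K-x\}$ of translated boundaries.

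For the second assertion I would again run a dimension count, the new ingredient being control of the ``silhouette'' sets. By regularity the normal cone $N(K-\eta,\xi_i)=N(K,\xi_i+\eta)$ is the ray generated by the unit outer normal $\nu(\xi_i+\eta)$, so linear independence of the cones is linear independence of the vectors $\nu(\xi_i+\eta)$. It suffices to show that a.s.\ there is no $\eta$ with $\xi_i+\eta\in\partial K$ for all $i$ and with these $m$ normals linearly dependent. Writing $y_i:=\xi_i+\eta$, the corresponding bad $(\xi_1,\dots,\xi_m)$ lie in the image, under the Lipschitz map $(y_1,\dots,y_m,\eta)\mapsto(y_1-\eta,\dots,y_m-\eta)$, of $C\times\R^d$, where
\[
C:=\bigl\{(y_1,\dots,y_m)\in(\partial K)^m:\ \nu(y_1),\dots,\nu(y_m)\ \text{linearly dependent}\bigr\}.
\]
Stratifying by the rank of $(\nu(y_1),\dots,\nu(y_m))$, the top stratum has (after relabelling) $\nu(y_1),\dots,\nu(y_{m-1})$ independent and $\nu(y_m)\in W:=\operatorname{span}(\nu(y_1),\dots,\nu(y_{m-1}))$, a subspace of dimension $m-1$, while lower strata are of smaller dimension. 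Thus, with $y_1,\dots,y_{m-1}$ fixed (a set of Hausdorff dimension $(m-1)(d-1)$), the point $y_m$ is confined to the silhouette $N_W:=\{y\in\partial K:\nu(y)\in W\}$.

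The crux, and the step I expect to be the main obstacle, is to bound the size of $N_W$: the Gauss map of a merely $C^1$ body need not be Lipschitz, so one cannot transfer dimension through it, and this is exactly where strict convexity must be used. I would prove that $N_W$ has Hausdorff dimension at most $\dim W-1$. Fix $y_0\in N_W$ and write $\partial K$ near $y_0$ as a $C^1$ graph $y_0+p+f(p)\,\nu(y_0)$ over its tangent space $T_{y_0}\partial K\supseteq W^\perp$, where $f$ is concave (convexity of $K$) and in fact \emph{strictly} concave, since strict convexity forbids affine pieces, hence segments, on $\partial K$. Splitting $p=(a,b)$ with $a$ ranging over $W^\perp$ and $b$ over $T_{y_0}\partial K\cap W$ (a space of dimension $\dim W-1$), the normal at $p$ lies in $W$ precisely when the $W^\perp$-component of $\nabla f$ vanishes, i.e.\ $\nabla_a f(a,b)=0$. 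For each fixed $b$ the map $a\mapsto f(a,b)$ is strictly concave, so it has a \emph{unique} critical point; thus $N_W$ is locally the graph of a single-valued map $b\mapsto a(b)$ over a set of dimension at most $\dim W-1$. Covering $\partial K$ by countably many such charts gives $\dim_{\mathcal H}N_W\le \dim W-1=m-2$.

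Feeding this back, $C$ is a countable union of Lipschitz images of sets of dimension at most $(m-1)(d-1)+(m-2)$, so $\dim_{\mathcal H}(C\times\R^d)\le (m-1)(d-1)+(m-2)+d=md-1$. Since a Lipschitz map does not increase Hausdorff dimension, the bad set has dimension at most $md-1<md$, is therefore Lebesgue-null in $(\R^d)^m$, and by absolute continuity of the law of $(\xi_1,\dots,\xi_m)$ the second probability is zero. The base case $m=1$ is trivial (a single unit vector is linearly independent), and the two hypotheses are both genuinely used: regularity to make the normals well defined and one-dimensional, and strict convexity both for the injectivity needed in the stratification and, crucially, for the strict concavity driving the silhouette bound.
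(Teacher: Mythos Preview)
Your argument for the first assertion is correct and is a clean alternative to the paper's approach: the paper instead shows that the intersection $Y=\bigcap_{i=1}^d(\partial K-\xi_i)$ is almost surely at most countable (using that every point of $Y$ carries a full-dimensional normal cone on $K\ominus\{\xi_1,\dots,\xi_d\}$), and then observes that $\xi_{d+1}$ almost surely misses $\partial K - Y$. Your direct dimension count is arguably more transparent, and the paper itself mentions a related Hausdorff-dimension alternative via Mattila's intersection theorems.

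There is, however, a genuine gap in your proof of the second assertion, precisely at the silhouette step. You show that in a local chart $N_W$ is the graph of a \emph{single-valued} map $b\mapsto a(b)$, where $a(b)$ is the unique maximiser of the strictly concave function $a\mapsto f(a,b)$, and you then conclude $\dim_{\mathcal H}N_W\le\dim W-1$ and that $C$ is a ``countable union of Lipschitz images'' of sets of the stated dimension. But single-valuedness and continuity (which is all that strict concavity of a $C^1$ function buys you) do not bound the Hausdorff dimension of a graph: the graph of a merely continuous function from an $(m-2)$-dimensional set can have Hausdorff dimension strictly larger than $m-2$. Concretely, for a merely regular (i.e.\ $C^1$) strictly convex body the inverse Gauss map $u\mapsto F(K,u)$ is only a homeomorphism, not Lipschitz, so nothing forces $\nu^{-1}(W\cap\Sphere)$ to have dimension $\dim W-1$; already the one-variable model $f(a,b)=-a^4+ab$ shows that $a(b)=(b/4)^{1/3}$ fails to be Lipschitz, and in higher dimensions there is no evident smooth reparametrisation to fall back on. Without this regularity your final Hausdorff-dimension bookkeeping for $C\times\R^d$ collapses.

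The paper avoids this obstacle entirely: it invokes transversality results for Lipschitz manifolds due to Rataj and Z\"ahle to argue that the translates $\partial K-\xi_i$ intersect transversally almost surely, and then appeals to a lemma from that reference which directly yields that the normal cones at any common point span an $m$-dimensional space, hence are linearly independent. This sidesteps any need to control silhouettes or the inverse Gauss map. If you want to repair your route, you would need either an independent argument that $\dim_{\mathcal H}N_W\le\dim W-1$ for $C^1$ strictly convex bodies (this is not obvious), or an additional smoothness hypothesis on $K$ (e.g.\ $C^2_+$, which makes the Gauss map bi-Lipschitz and your argument goes through verbatim).
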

\begin{proof}
  We start with the second statement and use the results of
  \cite{rat:zah05} about transversal intersection of Lipschitz
  manifolds. Note that $\partial K$ is a Lipschitz manifold since the
  boundary of $K$ is $C^1$. Furthermore, since $K$ is convex we can
  work with usual normal cones instead of Clarke cones used in
  \cite{rat:zah05}, see \cite[Proposition~2.4.4]{Clarke:1990}.  The
  normal cones $N(K,\eta+\xi_1)$ and $N(K,\eta+\xi_2)$ are
  one-dimensional and different with probability one. Furthermore,
  $N(K,\eta+\xi_2)=-N(K,\eta+\xi_1)$ with probability zero. Indeed,
  this equality holds only if $\eta+\xi_2$ is equal to the support point of
  $K$ in direction $-N(K,\eta+\xi_1)$, which is a singleton.
  Therefore, with probability one the Lipschitz manifolds
  $\partial K+\xi_i$, $i=1,\dots,d$, intersect transversally, see
  \cite[Section~6]{rat:zah05}. By Lemma~6 of this cited work, there
  exists an $m$-dimensional linear subspace of the linear hull of
  $N(K,\eta+\xi_i)$, $i=1,\dots,m$. In particular, this means that
  these normal cones are linearly independent.
  
  Consider the random set
  \begin{displaymath}
    Y:=\bigcap_{i=1}^d (\partial K-\xi_i).
  \end{displaymath}
  From the above proof with $m=d$, we see that, for almost all
  realisations of $\xi_1,\dots,\xi_d$ and each $y\in Y$, the convex
  hull of the normal cones $N(K-y,\xi_1),\dots,N(K-y,\xi_d)$ has
  nonempty interior in $\R^d$.  It is obvious that
  $Y\subset\partial\bar Y$, where
  \begin{displaymath}
    \bar Y:=\bigcap_{i=1}^d (K-\xi_i)=K\ominus\{\xi_1,\dots,\xi_d\}.
  \end{displaymath}
  At any $y\in Y$, the normal cone $N(\bar Y,y)$ is the convex hull of
  the normal cones $N(K-\xi_i,y)$, $i=1,\dots,d$, and so is of full
  dimension in $\R^d$. Thus, strict convexity and regularity of $K$
  yield that $S_{d-1}\big(K, N(\bar Y,y)\big)>0$. Since the cones $N(\bar Y,y)$
  are different for different $y$, we deduce that the set $Y$ is at
  most countable.

  Then $\{\xi_1,\xi_2,\ldots,\xi_{d+1}\}\subset (\partial K - x)$ if
  and only if $x\in Y$ and $\xi_{d+1}+x\in \partial K$. The
  probability that such an $x$ exists is at most
  $\Prob{\xi_{d+1}+Y\cap \partial K\neq\varnothing}$. This probability
  vanishes, since the distribution of $\xi_{d+1}$ is absolutely
  continuous, $\xi_{d+1}$ is independent of $Y$ and $Y$ is at most
  countable. Alternatively, the first statement can be derived by
  checking that
  \begin{displaymath}
    \P\bigg\{\dim_{\mathrm{H}}\Big(\bigcap_{i=1}^{d}(\partial
    K-\xi_i)\Big)=0\bigg\}=1,
  \end{displaymath}
  using Theorem~13.12 and Corollary~8.11 from \cite{Mattila:1999}, where
  $\dim_{\mathrm{H}}$ denotes the Hausdorff dimension. Note that this proof does
  not require regularity nor strict convexity of $K$, which results in
  a weaker statement that $Y$ has the Hausdorff dimension zero instead
  of being at most countable.
\end{proof}

\subsection{Vague convergence of measures on the family of convex
  compact sets}\label{sec:vague-conv-meas}

Let $X$ be a random convex set in $\sK^d_0\setminus\{0\}$, that is,
$X$ a.s.\ contains the origin. Its $n$ independent copies constitute a
binomial point process denoted by $\Psi_n$.

\begin{theorem}
  \label{thr:pp}
  Let $(\Psi_n)_{n\in\NN}$ be a sequence of binomial processes on
  $\sK^d_0\setminus\{0\}$, and let $\Psi$ be a locally finite Poisson
  process on $\sK^d_0\setminus\{0\}$. Then $n^{-1}\Psi_n$ 
  converges in distribution to $\Psi$ if and only if $n^{-1}Z_n$
  converges in distribution to a
  random compact convex set $Z$ as $n\to\infty$, where $Z_n$
  (respectively, $Z$) is the convex hull of the union of the sets from
  $\Psi_n$ (respectively, $\Psi$).
\end{theorem}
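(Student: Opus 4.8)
The plan is to reduce both directions to a single deterministic containment identity, combined with two standard equivalences. For $L_0\in\sK^d_{(0)}$ put $\sA_{L_0}:=\{L\in\sK^d_0: L\not\subset L_0\}$; since $0\in\Int L_0$, every $L\not\subset L_0$ satisfies $\sup_{x\in L}\|x\|\geq \mathrm{dist}(0,\partial L_0)>0$, so $\sA_{L_0}\in\sB_0$. As the support function of a convex hull is the pointwise supremum of the support functions of its constituents, $n^{-1}Z_n\subset L_0$ holds if and only if no atom of $n^{-1}\Psi_n$ lies in $\sA_{L_0}$. Writing $\nu_n(\cdot):=n\,\Prob{n^{-1}X\in\cdot}$ for the scaled intensity of $n^{-1}\Psi_n$ and $\mu$ for the intensity of $\Psi$, this gives
\begin{equation}
\label{eq:pp_identity}
\Prob{n^{-1}Z_n\subset L_0}=\Big(1-\tfrac1n\nu_n(\sA_{L_0})\Big)^{\!n},\qquad \Prob{Z\subset L_0}=e^{-\mu(\sA_{L_0})},
\end{equation}
the second equality using $Z=\conv(\bigcup\Psi)$ and $\mu(\sA_{L_0})<\infty$. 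I shall combine \eqref{eq:pp_identity} with: (E1) the Poisson limit theorem for null arrays (see, e.g., \cite{Kallenberg:2002}), namely $n^{-1}\Psi_n\dodn\Psi$ if and only if $\nu_n\tovaguen\mu$ on $\sK^d_0\setminus\{0\}$; and (E2) the criterion \cite[Theorem~1.8.14]{mo1} characterising weak convergence of random convex bodies through the containment functionals $L_0\mapsto\Prob{\,\cdot\subset L_0}$, exactly as in the proof of Theorem~\ref{thr:chull}.

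For the direct implication, assume $n^{-1}\Psi_n\dodn\Psi$. By (E1), $\nu_n\tovaguen\mu$, hence $\nu_n(\sA_{L_0})\to\mu(\sA_{L_0})$ for every $L_0$ with $\mu(\partial\sA_{L_0})=0$; feeding this into \eqref{eq:pp_identity} yields $\Prob{n^{-1}Z_n\subset L_0}\to\Prob{Z\subset L_0}$ for all such $L_0$. Tightness of $(n^{-1}Z_n)$ follows from the union bound $\Prob{\|n^{-1}Z_n\|>M}\leq\nu_n(\sA_{B_M(0)})\to\mu(\{L:\|L\|>M\})$, which tends to $0$ as $M\to\infty$ by local finiteness of $\mu$. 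Criterion (E2) then gives $n^{-1}Z_n\dodn Z$.

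For the converse, assume $n^{-1}Z_n\dodn Z$. By (E2) and \eqref{eq:pp_identity}, $(1-\tfrac1n\nu_n(\sA_{L_0}))^{n}\to e^{-\mu(\sA_{L_0})}$, whence $\nu_n(\sA_{L_0})\to\mu(\sA_{L_0})$ for every $L_0\in\sK^d_{(0)}$ that is a continuity set of the limit. Taking $L_0=B_\delta(0)$ shows that $\nu_n(\{L:\|L\|>\delta\})\to-\log\Prob{\|Z\|\leq\delta}<\infty$ for a.e.\ $\delta>0$, so $(\nu_n)$ is uniformly bounded on each set of $\sB_0$ and hence relatively compact in the vague topology. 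If $\mu'$ is any vague subsequential limit, then for every $L_0$ that is a continuity set of both $\mu$ and $\mu'$ we get $\mu'(\sA_{L_0})=\mu(\sA_{L_0})$. The uniqueness lemma below forces $\mu'=\mu$, so $\nu_n\tovaguen\mu$, and (E1) yields $n^{-1}\Psi_n\dodn\Psi$.

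The crux, which I expect to be the main obstacle, is the determining-class statement that \emph{a locally finite measure on $\sK^d_0\setminus\{0\}$ is uniquely determined by its values on the sets $\sA_{L_0}$, $L_0\in\sK^d_{(0)}$}. I would transport everything through the support function: $L\mapsto h(L,\cdot)|_{\Sphere}$ is a homeomorphism of $\sK^d_0$ onto the cone of nonnegative sublinear functions (Hausdorff distance corresponding to the sup norm), under which $\{L\subset L_0\}$ becomes $\{f:f\leq h(L_0,\cdot)\}$. These containment sets form a $\pi$-system, since $\{L\subset L_0\}\cap\{L\subset L_0'\}=\{L\subset L_0\cap L_0'\}$ with $L_0\cap L_0'\in\sK^d_{(0)}$; choosing $L_0=\bigcap_{j}H_{u_j,t_j}\cap B_R(0)$, where $H_{u_j,t_j}$ is a half-space with outer normal $u_j$, produces the finite-dimensional cylinder constraints $\{h(L,u_j)\leq t_j,\ j=1,\dots,k\}\cap\{\|L\|\leq R\}$, and since evaluations at a countable dense set of directions generate the Borel $\sigma$-algebra, so does this $\pi$-system. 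Agreement of $\mu$ and $\mu'$ transfers to these generators by working inside a shell $\{\|L\|>\delta\}$ of finite measure: for $L_0\supset\overline{B_\delta(0)}$ one has $\sA_{L_0}\subset\{\|L\|>\delta\}$, so agreement on $\sA_{L_0}$ and on $\{\|L\|>\delta\}=\sA_{B_\delta(0)}$ gives agreement on the complements $\{L\subset L_0\}\cap\{\|L\|>\delta\}$; letting $\delta\downarrow 0$ via continuity from below removes the threshold restriction $t_j\geq\delta$, and the $\pi$--$\lambda$ theorem together with the exhaustion $\sK^d_0\setminus\{0\}=\bigcup_{\delta>0}\{\|L\|>\delta\}$ then gives $\mu=\mu'$. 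The delicate points to verify carefully are precisely the recovery of the cylinder constraints from containment in convex $L_0$ (handled by the capped half-spaces and the limits $F=\bigcap_k(F+B_{1/k}(0))$) and the passage from agreement on the complements $\sA_{L_0}$ to agreement on the generating $\pi$-system after localising to a shell of finite measure.
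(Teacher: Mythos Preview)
Your proposal is correct and shares the same skeleton as the paper's proof: the containment identity you label \eqref{eq:pp_identity}, the Grigelionis/Poisson-limit equivalence (your (E1)), and the containment-functional criterion \cite[Theorem~1.8.14]{mo1} (your (E2)). The one point of divergence is how you upgrade convergence on the sets $\sA_{L_0}$ to full vague convergence of the intensities. You pass to a vague subsequential limit $\mu'$ and prove a uniqueness lemma by hand via a $\pi$--$\lambda$ argument on cylinder sets in support-function space; the paper instead localises to a shell $\sA_{B_\eps(0)}^c$, normalises $\mu_n$ and $\mu$ there to probability measures $\tilde\mu_n$, $\tilde\mu$ on $\sK^d$, and then applies (E2) \emph{a second time}---now to these normalised measures viewed as distributions of random compact sets---so that the determining-class property of containment events is absorbed into the cited theorem rather than re-derived. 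Both routes work; the paper's is shorter and sidesteps exactly the delicate points you flag (handling continuity sets for $\mu$ and $\mu'$ simultaneously, and generating the Borel $\sigma$-algebra from capped half-space containments), while yours makes the measure-theoretic content of the reduction explicit and self-contained.
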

\begin{proof}
  Denote the intensity measure of the limit process $\Psi$ by $\mu$,
  and let $\Psi_n:=\{X_1,\dots,X_n\}$ consist of $n$ independent
  copies of a random convex set $X$ with distribution $\nu$. Note that
  both $\mu$ and $\nu$ are measures on $\sK^d_0\setminus\{0\}$.

  It is well known (as a simple version of the Grigelionis theorem for
  general binomial processes, see, e.g.,
  \cite[Proposition~11.1.IX]{dal:ver08} or
  \cite[Corollary~4.25]{kalle17} or \cite[Theorem~4.2.5]{mo1}) that
  $n^{-1}\Psi_n$ converges in distribution  to $\Psi$ if and only if
  $\mu_n(\cdot):=n\nu(n\cdot)$ vaguely converges to $\mu$ on
  $\sK^d_0\setminus\{0\}$ as $n\to\infty$. In other words,
  \begin{equation}
    \label{eq:10v}
    n\Prob{n^{-1}X\in\sA }\to \mu(\sA)\quad \text{as }\; n\to\infty 
  \end{equation}
  for all $\sA\in\sB_0$ and such that $\sA$ is a continuity set for
  $\mu$.

  Introduce subfamilies of $\sK^d_0\setminus\{0\}$ by letting
  \begin{displaymath}
    \sA_L:=\{A\in\sK^d_0\setminus\{0\}: A\subset L\},
  \end{displaymath}
  where $L\in\sK^d_0\setminus\{0\}$ is an arbitrary compact convex
  set containing the origin and which is distinct from $\{0\}$. We
  first prove that the vague convergence $\mu_n\to\mu$ follows from
  \eqref{eq:10v} with $\mu$-continuous sets of the form $\sA_L^c$
  taken instead of general $\sA$.

  Fix an $\eps>0$ and let $L_0:=B_\eps(0)$ be the closed centred ball
  of radius $\eps$. It is always possible to ensure that $\sA_{L_0}^c$
  is a continuity set for $\mu$. For each $\sA\in\sB_0$, let
  \begin{equation}
    \label{eq:tildemu}
    \tilde{\mu}_n(\sA):=\frac{\mu_n(\sA\cap\sA_{L_0}^c)}
    {\mu_n(\sA_{L_0}^c)},\quad  n\geq 1,
  \end{equation}
  and define $\tilde{\mu}$ by the same transformation applied to
  $\mu$.  Then $\tilde{\mu}_n$ is a probability measure on
  $\sK^d_0\setminus\{0\}$ and so on $\sK^d$.

  It is known that $\tilde{\mu}_n$ converges in distribution to
  $\tilde{\mu}$ if and only if
  $\tilde{\mu}_n(\sA_L)\to\tilde{\mu}(\sA_L)$ for all $L\in\sK^d$ such
  that $\sA_L$ is a continuity set for $\tilde{\mu}$ and
  $\tilde{\mu}(\sA_L)\uparrow 1$ if $L$ increases to the whole space,
  see \cite[Theorem~1.8.14]{mo1}. The latter is clearly the case,
  since $\Psi$ has a locally finite intensity measure, hence, at most
  a finite number of its points intersects the complement of $B_r(0)$
  for any $r>0$.
    
  It obviously suffices to assume in \eqref{eq:10v} that $\sA$ is
  closed in the Hausdorff metric. Then there exists an $\eps>0$ such
  that each $A\in\sA$ is not a subset of $B_\eps(0)=:L_0$. Then
  $\sA\cap\sA_{L_0}^c=\sA$, so that
  $\tilde{\mu}_n(\sA)=\mu_n(\sA)/\mu_n(\sA_{L_0}^c)$ and
  $\tilde{\mu}(\sA)=\mu(\sA)/\mu(\sA_{L_0}^c)$. Finally, note that the
  convergence of the denominator in \eqref{eq:tildemu} follows from
  \eqref{eq:10v} for $\sA=\sA_{L_0}^c$ and recall that $L_0$ is chosen
  so that $\sA_{L_0}$ is $\mu$-continuity set.

  Therefore, it is possible to check \eqref{eq:10v} only for
  $\sA=\sA_L^c$, meaning that $n^{-1}\Psi_n$ converges in distribution
  to $\Psi$ if and only if
  \begin{equation}
    \label{eq:Ps1}
    n\Prob{n^{-1}X\not\subset L}\to \mu(\sA_L^c)\quad \text{as }\;
    n\to\infty  
  \end{equation}
  for all $L\in\sK^d_0\setminus\{0\}$ such that $\sA_L$ is a
  continuity set for $\mu$.  

  By \cite[Theorem~1.8.14]{mo1}, $n^{-1}Z_n$ converges in distribution to
  $Z$ if and only if
  \begin{equation}
    \label{eq:Ps2}
    \Prob{n^{-1}Z_n\subset L}\to \Prob{Z\subset L}\quad \text{as }\;
    n\to\infty  
  \end{equation}
  for all $L\in\sK^d_0$ such that $L$ is a continuity set for $Z$,
  that is, $\Prob{Z\subset L}=\Prob{Z\subset\Int L}$, and
  $\Prob{Z\subset L}\uparrow 1$ as $L$ increases to the whole
  space. The latter condition is the case by the assumed compactness of
  $Z$. Since
  \begin{displaymath}
    \Prob{Z\subset L}=\exp\{-\mu(\sA_L^c)\},
  \end{displaymath}
  $L$ is a continuity set for $Z$ if and only if $\sA_L$ is a
  continuity set for $\mu$. 
  
  Finally, note that
  \begin{displaymath}
    \Prob{n^{-1}Z_n\subset L}=\Big(1-\Prob{n^{-1}X\in\sA_L^c}\Big)^n,
  \end{displaymath}
  so that \eqref{eq:Ps1} is equivalent to \eqref{eq:Ps2}.
\end{proof}

\subsection{The expected number of vertices in the zero cell of the
  anisotropic Possion tessellation}

Recall that the zero cell $Z$ is the intersection of all half-spaces
$H^{-}_{u_i}(t_i)$, where $\mathcal{P}_{K}=\{(t_i,u_i):i \geq 1\}$ is
the Poisson process on $(0,\infty)\times \Sphere$ introduced in
Subsection \ref{subsec:X_n_conv}. The next theorem provides a formula
for the expected number $\E f_0(Z)$ of vertices of the random polytope
$Z$.

Let $H_i:=H_{u_i}(t_i)$ be the boundary of $H^{-}_{u_i}(t_i)$. Denote
by $\widehat{\mu}$ the intensity measure of the Poisson hyperplane
process $\{H_i:i\geq 1\}$ on the affine Grassmannian $A(d,d-1)$ of all
$(d-1)$-dimensional affine subspaces of $\R^d$.

\begin{theorem}\label{thm:zero_cell_vertexes}
  Let $Z$ be the zero cell of the anisotropic Poisson tessellation
  induced by the hyperplane process $\{H_{u_i}(t_i):i\geq 1\}$. Then
  formula \eqref{eq:f_0_Z_gen2} holds true.
\end{theorem}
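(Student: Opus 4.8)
The plan is to compute $\E f_0(Z)$ by writing the number of vertices as a sum over those unordered $d$-tuples of hyperplanes whose intersection point is a vertex of $Z$, and then applying the multivariate Mecke equation to the Poisson process $\mathcal{P}_K=\{(t_i,u_i):i\geq1\}$. Almost surely $Z$ is a simple polytope: no more than $d$ of the hyperplanes pass through a common point and the relevant normals are linearly independent, by a general position argument for $\mathcal{P}_K$ analogous to Lemma~\ref{lem:gen_pos}. Hence each vertex is the transversal intersection $p=p(t_1,u_1,\dots,t_d,u_d)$ of exactly $d$ hyperplanes, determined by $\langle p,u_i\rangle=t_i$, $i=1,\dots,d$. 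Such a $p$ is a vertex of $Z$ precisely when it lies in every half-space $H^-_{u_j}(t_j)$; since $p$ lies on the boundary of the $d$ chosen half-spaces, this amounts to requiring that no further hyperplane of $\mathcal{P}_K$ separates $p$ from the origin.

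First I would apply the order-$d$ Mecke formula, together with the void probability of a Poisson process for the ``no separating hyperplane'' event conditional on the $d$ chosen hyperplanes, to obtain
$$\E f_0(Z)=\frac{1}{d!}\int_{((0,\infty)\times\Sphere)^d} e^{-g(p)}\,\one_{\{u_1,\dots,u_d\text{ lin. indep.}\}}\prod_{i=1}^d \frac{dt_i\,S_{d-1}(K,du_i)}{V_d(K)},$$
where the factor $1/d!$ accounts for the ordering of the tuple, and $g(p)$ is the $\mathcal{P}_K$-intensity of the hyperplanes separating $p$ from $0$. A hyperplane with normal $u$ at distance $t$ separates $p$ from $0$ iff $0<t<\langle p,u\rangle$, and integrating out $t$ gives
$$g(p)=\frac{1}{V_d(K)}\int_{\Sphere}\langle p,u\rangle_+\,S_{d-1}(K,du).$$

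The heart of the argument is an integral-geometric change of variables replacing the distances $(t_1,\dots,t_d)$ by the intersection point $p$. For fixed linearly independent normals the map $(t_1,\dots,t_d)\mapsto p$ is the inverse of $p\mapsto(\langle p,u_1\rangle,\dots,\langle p,u_d\rangle)$, whose Jacobian is exactly the parallelepiped volume $[u_1,\dots,u_d]$, while the constraints $t_i>0$ become $\langle p,u_i\rangle>0$. After this substitution I would pass to polar coordinates $p=rx$, $r>0$, $x\in\Sphere$, use that $g$ is positively homogeneous of degree one so that $g(rx)=r\,g(x)$, and evaluate the radial integral $\int_0^\infty e^{-r g(x)}r^{d-1}\,dr=\Gamma(d)\,g(x)^{-d}$. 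Collecting the constants via $(d-1)!/d!=1/d$ and using Fubini to perform the $v_i$-integrations before the $x$-integration recovers $J(x)$ from \eqref{eq:f_0_Z_gen3} as the inner integral. Since the powers of $V_d(K)^{-1}$ cancel, $V_d(K)^{-d}g(x)^{-d}=\big(\int_{\Sphere}\langle x,u\rangle_+S_{d-1}(K,du)\big)^{-d}=h(\piv K,x)^{-d}$ by the definition \eqref{app:eq:h_def}, yielding \eqref{eq:f_0_Z_gen2} exactly; in particular the answer is independent of the normalising constant $V_d(K)^{-1}$.

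The main obstacle is the rigorous justification of the reduction to $d$-fold transversal intersections: one must verify that, with probability one, $Z$ has no vertex lying on more than $d$ hyperplanes and that the intersecting normals are linearly independent, so that the Mecke integrand is the clean indicator used above, and that all the integrals are finite (for which the boundedness of $Z$, guaranteed since $S_{d-1}(K,\cdot)$ is not supported in a hemisphere, is essential). This a.s.\ general position for $\mathcal{P}_K$ is the analogue of the transversality arguments behind Lemma~\ref{lem:gen_pos} and of the classical treatment in \cite{sch82}. The only genuinely new point relative to Schneider's even case is that, lacking symmetry of $S_{d-1}(K,\cdot)$, the positive part $\langle\cdot,\cdot\rangle_+$ cannot be replaced by $\tfrac12|\langle\cdot,\cdot\rangle|$, which is precisely why $J(x)$ remains direction-dependent in \eqref{eq:f_0_Z_gen3}.
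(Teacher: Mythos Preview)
Your proposal is correct and follows essentially the same route as the paper: both express $f_0(Z)$ as a sum over unordered $d$-tuples of hyperplanes, apply the multivariate Mecke equation, perform the change of variables from $(t_1,\dots,t_d)$ to the intersection point $p$ with Jacobian $[u_1,\dots,u_d]$, and then pass to polar coordinates to produce the factor $(d-1)!\,h(\piv K,x)^{-d}$ and recover $J(x)$. The only cosmetic differences are that the paper assumes $V_d(K)=1$ at the outset (whereas you track the cancellation explicitly) and that the paper outsources the change-of-variables identity to \cite{sch82} rather than spelling out the Jacobian argument as you do.
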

\begin{proof}
  Without loss of generality we may and do assume that $V_d(K)=1$. We
  start by noticing that
  \begin{equation}\label{app:eq:h_def} 
    h(\piv K,x):=\frac{1}{2}\int_{\Sphere}|\langle x,u\rangle|
    S_{d-1}(K,{\rm d}u)=
    \int_{\Sphere}\langle x,u\rangle
    \one_{\{\langle x,u\rangle\geq 0\}}S_{d-1}(K,{\rm d}u),
  \end{equation}
  which follows from the relation $a\one_{\{a\geq 0\}}=(a+|a|)/2$,
  $a\in\R$, and the fact that $\int_{\Sphere}u S_{d-1}(K,{\rm
    d}u)=0$. Hence, $h(\piv K,x)$ is indeed the support function of the
  projection body $\piv K$ of $K$, see \cite[Eq.~(5.80)]{schn2}. Note
  also that $h(\piv K,x)$ is equal to the $\widehat{\mu}$-content of
  the set of $H\in A(d,d-1)$ such that $x\notin H^-$.
  
  Let $f$ be an arbitrary nonnegative measurable function. By
  repeating verbatim the proof given in the Appendix of \cite{sch82},
  it can be checked that
  \begin{displaymath}
    \int_{(A(d,d-1))^d} f(x)
    \one_{\{H_1\cap\cdots\cap H_d=\{x\}\}}
    {\rm d}\widehat{\mu}(H_1)\cdots {\rm d}\widehat{\mu}(H_d)\\
    =\int_{\R^d} f(x) J(x){\rm d}x,
  \end{displaymath}
  where $J(x)$ is given at \eqref{eq:f_0_Z_gen3}. Using this equality
  with $f(x):=e^{-h(\piv K,x)}$ and the multivariate Mecke equation,
  see \cite[Cor.~3.2.3]{sch:weil08}, we obtain
  \begin{align*}
    \E f_0(Z)
    &=\frac{1}{d!}\sum_{i_1\geq 1,\ldots,i_d\geq 1}
      \one_{\{H_{i_1},H_{i_2},\ldots,H_{i_d}\text{ intersect at a vertex of }Z\}}\\
    &=\frac{1}{d!}\int_{(A(d,d-1))^d}\P\{H_1,\ldots,H_d
      \text{ intersect at a vertex of }
      Z\cap H_1\cap\cdots\cap H_d\}
      {\rm d}\widehat{\mu}(H_1)\cdots {\rm d}\widehat{\mu}(H_d)\\
    &=\frac{1}{d!}\int_{(A(d,d-1))^d}e^{-h(\piv K,x)}
      \one_{\{H_1\cap\cdots \cap H_d=\{x\}\}}
      {\rm d}\widehat{\mu}(H_1)\cdots {\rm d}\widehat{\mu}(H_d)\\
    &=\frac{1}{d!}\int_{\R^d} e^{-h(\piv K,x)} J(x){\rm d}x.
  \end{align*}
  Passing to the polar coordinates and using that
  $h(\piv K,tu)=th(\piv K,u)$ and $J(tu)=J(u)$ for all $t>0$ and
  $u\in\Sphere$, we obtain
  \begin{multline*}
    \E f_0(Z)=\frac{1}{d!}\int_0^{\infty}\int_{\Sphere}
    e^{-th(\piv K,u)}J(u)t^{d-1}{\rm d}t{\rm d}u\\
    =\frac{1}{d!}\int_{\Sphere}J(u)\int_0^{\infty}
    e^{-s}s^{d-1}(h(\piv K,u))^{-d}{\rm d}s{\rm d}u
    =\frac{1}{d}\int_{\Sphere}(h(\piv K,u))^{-d}J(u){\rm d}u.
  \end{multline*}
  The proof is complete.
\end{proof}

\begin{center}{\sc Acknowledgements}\end{center}

The work of both authors was supported by a grant IZHRZ0\_180549 from
the Swiss National Science Foundation and Croatian Science Foundation,
project ``Probabilistic and analytical aspects of generalised regular
variation''. The work of AM has also received funding from the Ulam
Program of the Polish National Agency for Academic Exchange (NAWA),
project No. PPN/ULM/2019/1/00004/DEC/1. Both authors are grateful to
the University of Wroclaw for hospitality.

The authors are grateful to Peter Kevei who triggered this work by
drawing their attention to the research on ball hulls, and to Ferenc
Fodor and Daniel Hug for further discussions in the course of this
work. The authors are indebted to Vlad Bohun for his assistance in
making the simulations. We also thank the referee for a number of
useful remarks and comments.

\let\oldaddcontentsline\addcontentsline
\renewcommand{\addcontentsline}[3]{}
\bibliographystyle{abbrv}
\bibliography{Spindle}
\let\addcontentsline\oldaddcontentsline
\end{document}